\title{A Permutation Module Deligne Category and Stable Patterns of Kronecker Coefficients}
\author{Christopher Ryba}
\DeclareMathOperator{\Hom}{Hom}
\DeclareMathOperator{\End}{End}
\DeclareMathOperator{\Ind}{Ind}
\DeclareMathOperator{\Sym}{Sym}
\DeclareMathOperator{\Mat}{Mat}
\DeclareMathOperator{\Id}{Id}
\begin{document}
\maketitle

\newtheorem{theorem}{Theorem}[section]
\newtheorem{lemma}[theorem]{Lemma}
\newtheorem{proposition}[theorem]{Proposition}
\newtheorem{corollary}[theorem]{Corollary}
\newtheorem{example}[theorem]{Example}
\newtheorem{remark}[theorem]{Remark}
\newtheorem{definition}[theorem]{Definition}

\begin{abstract}
Deligne's category $\underline{{\rm Rep}}(S_t)$ is a tensor category depending on a parameter $t$ ``interpolating'' the categories of representations of the symmetric groups $S_n$. We construct a family of categories $\mathcal{C}_\lambda$ (depending on a vector of variables $\lambda = (\lambda_1, \lambda_2, \ldots, \lambda_l)$, that may be specialised to values in the ground ring) which are module categories over $\underline{{\rm Rep}}(S_t)$. The categories $\mathcal{C}_\lambda$ are defined over any ring and are constructed by interpolating permutation representations. Further, they admit specialisation functors to $S_n$-mod which are tensor-compatible with the functors $\underline{{\rm Rep}}(S_t) \to S_n$-mod. We show that $\mathcal{C}_\lambda$ can be presented using the Kostant integral form of Lusztig's universal enveloping algebra $\dot{U}(\mathfrak{gl_{\infty}})$, and exhibit a categorification of some stability properties of Kronecker coefficients.
\end{abstract}

\tableofcontents

\section{Introduction}
\noindent
In this paper, we consider Deligne's category $\underline{{\rm Rep}}(S_t)$, a tensor category depending ``polynomially'' on a parameter $t$. It is usually defined over a field of characteristic zero (although it can be defined over any commutative ring) and may be thought of as an interpolation of the representation categories of the symmetric groups $S_n$ as tensor categories. There are ``specialisation'' functors $\underline{{\rm Rep}}(S_t) \to S_n$-mod which are full and essentially surjective in characteristic zero. Harman (\cite{NateStability}, see also chapter 4 of \cite{NateThesis}) introduced a different construction, $\underline{{\rm Perm}}_t$, based on permutation modules which he used to prove properties of decomposition numbers for symmetric groups. Using work of Comes and Ostrik \cite{OC}, we show that over a field of characteristic zero, $\underline{{\rm Rep}}(S_t)$ and $\underline{{\rm Perm}}_t$ are equivalent as tensor categories. Thus the two categories may be thought of as two different ``integral forms'' of the same category.
\newline \newline \noindent
We also construct a family of categories $\mathcal{C}_\lambda$ (depending on $\lambda = (\lambda_1, \lambda_2, \ldots, \lambda_l)$) also by interpolating permutation modules. The $\mathcal{C}_\lambda$ are themselves module categories over $\underline{{\rm Perm}}_t$, and the case $l=1$ yields $\underline{{\rm Perm}}_{\lambda_1}$. The $\mathcal{C}_\lambda$ admit specialisation functors to $S_n$-mod of their own which are compatible with those on $\underline{{\rm Perm}}_t$ (the functor $\underline{{\rm Perm}}_t \otimes \mathcal{C}_\lambda \to \mathcal{C}_\lambda$ specialises to the usual tensor product $S_n\mbox{-mod} \otimes S_n\mbox{-mod} \to S_n\mbox{-mod}$). The categories $\mathcal{C}_\lambda$ and their specialisation functors are defined over any commutative ring. However, although the Deligne category $\underline{{\rm Rep}}(S_t)$ is a semisimple abelian category when $t \notin \mathbb{Z}_{\geq 0}$, $\mathcal{C}_\lambda$ is only Karoubian (and is not Krull-Schmidt in general).
\newline \newline \noindent
The categories $\mathcal{C}_\lambda$ can be presented using the Kostant integral form of Lusztig's universal eneveloping algebra $\dot{U}(\mathfrak{gl}_\infty)$; in particular, the $\hom$-spaces are given by weight spaces in this algebra. This is readily leveraged to categorify the $(|\lambda|, \lambda, \lambda)$ Kronecker coefficient stability pattern described by Stembridge in \cite{Stembridge}, which we now describe.
\newline \newline \noindent
Suppose that $\alpha, \beta, \gamma, \lambda, \mu, \nu$ are partitions such that $|\alpha|=|\beta|=|\gamma|$ and $|\lambda|=|\mu|=|\nu|$. For $n \in \mathbb{Z}_{\geq 0}$, one may consider the Kronecker coefficients (tensor product multiplicities for symmetric groups)
\[
k_{\alpha + n\lambda, \beta + n \mu}^{\gamma + n \nu},
\]
where addition and scalar multiplication are defined componentwise. Stembridge conjectured that for any fixed triple $(\lambda, \mu, \nu)$, if there exist $\alpha, \beta, \gamma$ such that the limit of the above sequence exists as $n \to \infty$, then the sequence has a limit for any choice of $\alpha, \beta, \gamma$. This conjecture was proved by Sam and Snowden in \cite{SS}, using geometric methods. It remains an open problem to describe which triples $(\lambda, \mu, \nu)$ exhibit this stability property. Stembridge observes that all triples of the form $(|\lambda|, \lambda, \lambda)$ satisfy this condition (see Example 6.3 $(a)$ of \cite{Stembridge}). Our categorification illustrates this for all $\lambda$ simultaneously.
\newline \newline \noindent
The outline of the paper is as follows. In Section 2, we cover the necessary background on partition combinatorics, Schur algebras, and $\underline{{\rm Rep}}(S_t)$. Then, in Section 3, we construct the categories $\mathcal{C}_\lambda$ and discuss their relation to Deligne's category as well as their specialisation functors and tensor structure. In Section 4, we show how all the previous structure can be expressed in a more Lie-theoretic way using a presentation of Schur algebras as a quotient of $\dot{U}(\mathfrak{gl}_n)$ due to Doty and Giaquinto \cite{DG}. We conclude with Section 5 where we prove a categorified version of a stability property of Kronecker coefficients. The appendix sketches the proof that a certain example of $\mathcal{C}_\lambda$ fails to be Krull-Schmidt.

\subsection{Acknowledgements}
\noindent
The idea for this paper (categorification of Stembridge's stability patterns for Kronecker coefficients) was suggested by Pavel Etingof, following conversations with Greta Panova. The author would like to thank Pavel Etingof for useful conversations and Andrew Mathas for directing the author to \cite{DJ}.

\section{Preliminaries}
\subsection{Partitions and Compositions and Young Subgroups}
Recall that a finite weakly decreasing sequence of nonnegative integers is called a partition. We consider partitions that differ only by trailing zeroes to be equivalent. Suppose that $\lambda = (\lambda_1, \lambda_2, \ldots, \lambda_l)$ is a partition. The $\lambda_i$ are called the parts of $\lambda$. The length of $\lambda$, denoted $l(\lambda)$, is the number of nonzero parts of $\lambda$. The size of $\lambda$, denoted $|\lambda|$, is the sum of the parts of $\lambda$. The notation $\lambda \vdash n$ means that $\lambda$ is a partition of size $n$. It is common to use the notation $\lambda = (1^{m_1} 2^{m_2} \cdots )$ to mean that $\lambda$ has $m_i$ parts equal to $i$.
\newline \newline \noindent
There is a partial order on partitions of a fixed size called the dominance ordering. We write $\mu \unrhd \nu$ if 
\[
\mu_1 + \mu_2 + \cdots + \mu_i \geq \nu_1 + \nu_2 + \cdots + \nu_i
\]
for all $i \geq 1$, with equality holding for $i$ sufficiently large.
\newline \newline \noindent
A composition is a finite sequence of nonnegative integers. The length and size of compositions is defined identically to partitions. Two compositions that differ by trailing zeroes are considered equivalent (but not intermediate zeroes). To indicate that $\alpha$ is a composition of $n$, we write $\alpha \models n$. We write $\Lambda(n,d)$ for the set of compositions of $d$ into at most $n$ parts. The concatenation of sequences $\lambda, \mu$ (usually partitions or compositions) is denoted $(\lambda, \mu)$.
\newline \newline \noindent
A set partition $\alpha = \{ \alpha_i \}_{i \in I}$ of a set $U$ is a family of subsets $\alpha_i \subseteq U$ such that $\amalg_{i \in I} \alpha_i = U$. We will only consider finite sets $U$ and finite index sets $I$. We say that a set partition $\alpha$ is a coarsening of a set partition $\beta$ of the same set, if each part of $\alpha$ is a union of parts of $\beta$.
\newline \newline \noindent
For a composition $\alpha $, we define a Young subgroup, $S_\alpha$, of the symmetric group $S_{|\alpha|}$ as follows:
\[
S_\alpha = \prod_{i=1}^{l(\alpha)} S_{\alpha_i}.
\]
We refer to the $S_{\alpha_i}$ as factor groups of $S_\alpha$. Note that $S_\alpha$ embeds in $S_{|\alpha|}$ in the obvious way. For example, if $\alpha = \{1,3,2\}$, we have 
\[
S_\alpha = \Sym(\{1\})\times \Sym(\{2,3,4\}) \times \Sym(\{5,6\}) \subseteq \Sym(\{1,2,3,4,5,6\}) = S_6.
\]
A Young subgroup $S_\alpha$ is conjugate to any Young subgroup $S_\beta$ where $\beta$ is obtained from $\alpha$ by reordering parts. In particular, each $S_\alpha$ is conjugate to a unique $S_\lambda$ where $\lambda$ is a partition (obtained by sorting the parts of $\alpha$ in decreasing order).
\newline \newline
\noindent
We will need to understand the  $(S_\alpha, S_\beta)$-double cosets in $S_n$, where $\alpha, \beta \models n$. The following result is well known.
\begin{proposition} \label{young_double_cosets}
For two compositions $\alpha, \beta \models n$, the $S_\alpha \backslash S_n / S_\beta$ double cosets are indexed by $l(\beta) \times l(\alpha)$ matrices $A = (a_{ij})$ with entries in $\mathbb{Z}_{\geq 0}$ satisfying the following:
\begin{eqnarray*}
\sum_{i=1}^{l(\beta)} a_{ij} &=& \alpha_j, \\
\sum_{j=1}^{l(\alpha)} a_{ij} &=& \beta_i.
\end{eqnarray*}
A permutation $\sigma \in S_n$ belongs to the double coset indexed by $A$ if and only if the number of elements of $\{1,2,\ldots, n\}$ permuted by the factor group $S_{\beta_i}$ mapped by $\sigma$ to elements permuted by the factor group $S_{\alpha_j}$ is $a_{ij}$.
\end{proposition}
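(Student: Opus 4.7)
The plan is to define a map from $S_n$ to the set of matrices described in the statement, verify that it is constant on double cosets, and then exhibit an inverse construction. Let $B_1, B_2, \ldots, B_{l(\beta)}$ denote the consecutive blocks of $\{1,2,\ldots,n\}$ permuted by the factors of $S_\beta$ (so $|B_i| = \beta_i$), and similarly let $A_1, \ldots, A_{l(\alpha)}$ be the blocks permuted by the factors of $S_\alpha$. Given $\sigma \in S_n$, I would define $a_{ij}(\sigma) = |\sigma(B_i) \cap A_j|$. The row and column sum conditions are immediate: $\sum_j a_{ij}(\sigma) = |\sigma(B_i)| = \beta_i$ and $\sum_i a_{ij}(\sigma) = |A_j| = \alpha_j$ since the $\sigma(B_i)$ (respectively the $A_j$) partition $\{1,\ldots,n\}$.

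Next I would check that $A(\sigma)$ only depends on the double coset $S_\alpha \sigma S_\beta$. If $t \in S_\beta$, then $t$ preserves each $B_i$ setwise, so $\sigma t (B_i) = \sigma(B_i)$; if $s \in S_\alpha$, then $s$ preserves each $A_j$ setwise, so $|s\sigma(B_i) \cap A_j| = |s(\sigma(B_i) \cap A_j)| = |\sigma(B_i) \cap A_j|$. Hence $A(s\sigma t) = A(\sigma)$, giving a well-defined map from double cosets to matrices.

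The main work is showing that this map is a bijection onto the set of matrices satisfying the row/column sum conditions. For surjectivity, given such a matrix $A$, I would partition each $B_i$ into subsets $C_{ij}$ with $|C_{ij}| = a_{ij}$ (possible since $\sum_j a_{ij} = \beta_i$) and each $A_j$ into subsets $D_{ij}$ with $|D_{ij}|=a_{ij}$, then let $\sigma$ be any bijection of $\{1,\ldots,n\}$ restricting to bijections $C_{ij} \to D_{ij}$; by construction $A(\sigma) = A$. For injectivity, suppose $A(\sigma) = A(\sigma')$. Setting $C_{ij} = B_i \cap \sigma^{-1}(A_j)$ and $C'_{ij} = B_i \cap \sigma'^{-1}(A_j)$, both collections partition each $B_i$ into pieces of the same sizes $a_{ij}$, so there exists $t \in S_\beta$ with $t(C'_{ij}) = C_{ij}$. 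Then $\sigma t$ and $\sigma'$ both send $C'_{ij}$ into $A_j$, and within each $A_j$ the images $\sigma t(C'_{ij}) = \sigma(C_{ij})$ and $\sigma'(C'_{ij})$ are subsets of the same size; defining $s \in S_\alpha$ by $s := \sigma'(\sigma t)^{-1}$ (checked to preserve each $A_j$ setwise by the previous remark) yields $s \sigma t = \sigma'$, so $\sigma$ and $\sigma'$ lie in the same double coset.

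The main obstacle is purely bookkeeping: one must be careful that the constructed $s$ really lies in $S_\alpha$ rather than just permuting $\{1,\ldots,n\}$, which amounts to verifying that $s$ preserves the decomposition $\{1,\ldots,n\} = \amalg_j A_j$; this follows because $A_j = \amalg_i \sigma(C_{ij})$ is carried by $s$ to $\amalg_i \sigma'(C'_{ij}) = A_j$. Once this is in place, the statement about which double coset $\sigma$ belongs to is simply a restatement of the defining formula $a_{ij} = |\sigma(B_i) \cap A_j|$.
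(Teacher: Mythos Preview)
Your proof is correct and complete. The paper does not actually prove this proposition; it is stated as a well-known result with no argument given. Your approach---defining the map $\sigma \mapsto (|\sigma(B_i)\cap A_j|)_{i,j}$, checking invariance under left and right multiplication, and then constructing explicit $s\in S_\alpha$, $t\in S_\beta$ to establish injectivity---is the standard one, and your verification that $s=\sigma'(\sigma t)^{-1}$ preserves each $A_j$ is exactly the detail one needs to be careful about.
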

\noindent
We will typically use the same notation for a double coset representative and the associated matrix (e.g. $q$ and $q_{ij}$), and we usually use the letters $p,q,r,s$ for double coset representatives. It will be convenient for us to depict double cosets using the corresponding matrices, for example, the following matrix may be interpreted as a $(S_{(4,2)}, S_{(1,3,2)})$-double coset in $S_6$:
\[
\left( \begin{array}{cc}
1 & 0 \\
2 & 1 \\
1 & 1
\end{array} \right).
\]
\begin{remark} \label{reflection_remark}
If $q$ is a $(S_\alpha, S_\beta)$-double coset representative, then $q^{-1}$ is a $(S_\beta, S_\alpha)$-double coset representative. This map is well defined on the level of double cosets, and the matrix corresponding to $S_\beta q^{-1} S_\alpha$ is the transpose of the matrix of $S_\alpha q S_\beta$.
\end{remark}
\noindent
Finally, we recall two families of representations of the symmetric group $S_n$. Given $\alpha \models n$, we write $M^\alpha$ for the permutation module induced from the trivial $S_\alpha$-module, and for $\lambda \vdash n$, we write $S^{\lambda}$ for the Specht module labelled by the partition $\lambda$. Over $\mathbb{Q}$ the Specht modules are irreducible representations, and for partitions $\mu$ and $\lambda$, $\dim_{\mathbb{Q}}(\Hom_{S_n}(S^\lambda, M^\mu))$ is equal to the Kostka number $K_{\lambda, \mu}$. Note that $K_{\lambda, \mu}$ is zero unless $\lambda \unrhd \mu$, and equal to 1 if $\mu = \lambda$.
\begin{lemma} \label{sss_lemma}
Let us work with modules over $\mathbb{Q}S_n$. Consider $\Hom(M^\mu, S^\mu)$ as a right module for $\End(M^\mu)$ and $\Hom(S^\lambda, M^\lambda)$ as a left module for $\End(M^\lambda)$. If $N$ is any $S_n$-module,
\[
\Hom(S^\lambda, N \otimes S^{\mu}) = 
\Hom(M^\mu, S^\mu) \otimes_{\End_{\mathbb{Q}S_n}(M^\mu)} \Hom(M^\lambda, N \otimes M^\mu) \otimes_{\End_{\mathbb{Q}S_n}(M^\lambda)}\Hom(S^\lambda, M^\lambda),
\]
and this is functorial in $N$.
\end{lemma}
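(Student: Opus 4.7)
The plan is to construct the natural composition map and prove it is an isomorphism by identifying the source as a sub-bimodule of $\Hom(M^\lambda, N \otimes M^\mu)$ cut out by idempotents for the summands $S^\lambda \subset M^\lambda$ and $S^\mu \subset M^\mu$. Set $A = \End_{\mathbb{Q} S_n}(M^\lambda)$, $B = \End_{\mathbb{Q} S_n}(M^\mu)$, and regard $X := \Hom_{\mathbb{Q} S_n}(M^\lambda, N \otimes M^\mu)$ as a $(B,A)$-bimodule, with $B$ acting on the left by postcomposition with $\Id_N \otimes (-)$ and $A$ acting on the right by precomposition. My candidate map is
\[
\Phi_N \colon \Hom(M^\mu, S^\mu) \otimes_B X \otimes_A \Hom(S^\lambda, M^\lambda) \to \Hom(S^\lambda, N \otimes S^\mu), \quad f \otimes g \otimes h \mapsto (\Id_N \otimes f) \circ g \circ h.
\]
Associativity of composition gives compatibility with the $A$- and $B$-tensor relations, and naturality in $N$ is evident.

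Since $\mathbb{Q} S_n$ is semisimple and $K_{\lambda,\lambda} = 1 = K_{\mu,\mu}$, the Specht modules $S^\lambda$ and $S^\mu$ appear as summands of $M^\lambda$ and $M^\mu$ with multiplicity one. I would fix projections $\pi_\lambda, \pi_\mu$ and inclusions $\iota_\lambda, \iota_\mu$ witnessing these summands (so $\pi_\bullet \iota_\bullet = \Id$), set $e_\lambda = \iota_\lambda \pi_\lambda \in A$ and $e_\mu = \iota_\mu \pi_\mu \in B$, and let $R_\lambda, R_\mu$ be the complementary summands. Because $R_\lambda$ contains no composition factor isomorphic to $S^\lambda$ (and similarly for $\mu$), the assignments $h \mapsto h \pi_\lambda$ and $f \mapsto \iota_\mu f$ give isomorphisms $\Hom(S^\lambda, M^\lambda) \cong A e_\lambda$ of left $A$-modules and $\Hom(M^\mu, S^\mu) \cong e_\mu B$ of right $B$-modules.

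With these substitutions the source of $\Phi_N$ collapses to $e_\mu B \otimes_B X \otimes_A A e_\lambda = e_\mu X e_\lambda$; explicitly, an element $\phi$ of this subspace is a map $M^\lambda \to N \otimes M^\mu$ that vanishes on $R_\lambda$ (from $\phi \circ e_\lambda = \phi$) and has image contained in $N \otimes S^\mu$ (from $(\Id_N \otimes e_\mu) \circ \phi = \phi$). Such $\phi$ correspond bijectively to maps $S^\lambda \to N \otimes S^\mu$ via $\phi \leftrightarrow (\Id_N \otimes \pi_\mu) \circ \phi \circ \iota_\lambda$, and a direct calculation using $\pi_\bullet \iota_\bullet = \Id$ shows that tracing a pure tensor $f \otimes g \otimes h$ through the chain of identifications recovers $(\Id_N \otimes f) \circ g \circ h = \Phi_N(f \otimes g \otimes h)$. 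Hence $\Phi_N$ is the desired isomorphism, and naturality in $N$ is automatic because $e_\lambda$ and $e_\mu$ do not depend on $N$. The only real difficulty is the bookkeeping of bimodule sides — ensuring the idempotents sit on the correct sides so that the tensor products over $A$ and $B$ collapse as advertised — but once the two Hom-to-ideal identifications are in place the remainder is formal.
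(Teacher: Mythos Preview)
Your proof is correct and follows essentially the same idea as the paper's: semisimplicity of $\mathbb{Q}S_n$ together with the multiplicity-one occurrence of $S^\lambda$ in $M^\lambda$ (and of $S^\mu$ in $M^\mu$). The paper gives only a two-sentence sketch invoking the unique factorisation of maps $S^\lambda \to N \otimes M^\mu$ through $M^\lambda$; your idempotent formulation via $e_\lambda, e_\mu$ and the collapse $e_\mu B \otimes_B X \otimes_A A e_\lambda \cong e_\mu X e_\lambda$ is a more explicit version of the same argument.
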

\begin{proof}
Because $S^\lambda$ occurs with multiplicity one in $M^\lambda$, any map $S^\lambda \to N \otimes M^\mu$ factors through $M^\lambda$ by semisimplicity of representations of $S_n$ over $\mathbb{Q}$, and the map $S^\lambda \to M^\lambda$ is unique up to scalar multiplication. An analogous property holds for maps into $S^\mu$.
\end{proof}

\subsection{Schur Algebras}
We recall some properties of Schur algebras that will be useful. Unless indicated otherwise, we work over an arbitrary commutative ring $R$. We let $V = R^{\oplus n}$ have standard basis $v_1, v_2, \ldots, v_n$.

\begin{definition}
Note that $V^{\otimes d}$ carries an action of the symmetric group $S_d$ by permutation of tensor factors. The Schur algebra, $S(n, d)$, is defined to be the commutant $\End_{S_d}(V^{\otimes d})$. When we wish to make the ground ring $R$ explicit, we will write $S_R(n,d)$.
\end{definition}

\begin{lemma}
Let us write $(V^{\otimes d})_\alpha$ for the $\alpha$-weight space of $V^{\otimes d}$ (i.e. the span of $v_{i_1} \otimes v_{i_2} \otimes \cdots \otimes v_{i_d}$ where the number of $i_j$ equal to $k$ is $\alpha_k$). Then $(V^{\otimes d})_\alpha$ is preserved by the $S_d$ action, and is isomorphic to $M^\alpha$. Over $\mathbb{Q}$, for a partition $\lambda$ of $d$, the image of $S^\lambda$ inside $(V^{\otimes d})_\lambda = M^\lambda$ consists of highest-weight vectors for $GL(V)$ of weight $\lambda$.
\end{lemma}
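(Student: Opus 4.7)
My plan is to handle the two statements separately, using a direct combinatorial argument for the first and Schur-Weyl duality for the second.

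For the first claim, I would observe that the standard basis vectors $v_{i_1} \otimes \cdots \otimes v_{i_d}$ of $V^{\otimes d}$ are in bijection with functions $f \colon \{1,\ldots,d\} \to \{1,\ldots,n\}$, and the weight of such a basis vector is the composition $(|f^{-1}(1)|, |f^{-1}(2)|, \ldots)$. The $S_d$ action by permuting tensor factors translates to the action on functions by precomposition, which preserves fiber sizes; hence $(V^{\otimes d})_\alpha$ is $S_d$-stable. Moreover, $S_d$ acts transitively on the basis of $(V^{\otimes d})_\alpha$, and the stabiliser of the standard basis vector whose first $\alpha_1$ tensor factors are $v_1$, the next $\alpha_2$ tensor factors are $v_2$, etc., is exactly the Young subgroup $S_\alpha$. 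Therefore $(V^{\otimes d})_\alpha \cong R[S_d/S_\alpha] = M^\alpha$ as $S_d$-modules.

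For the second claim, I would work over $\mathbb{Q}$ and invoke Schur-Weyl duality to decompose
\[
V^{\otimes d} \;\cong\; \bigoplus_{\lambda \vdash d,\ l(\lambda) \leq n} L_\lambda(V) \otimes S^\lambda
\]
as a $GL(V) \times S_d$-module. In the simple $GL(V)$-module $L_\mu(V)$, the subspace of highest-weight vectors of weight $\lambda$ is one-dimensional when $\mu = \lambda$ and zero otherwise. Consequently, the subspace $W \subseteq V^{\otimes d}$ of $GL(V)$-highest-weight vectors of weight $\lambda$ is isomorphic to $S^\lambda$ as an $S_d$-module, and since its elements have weight $\lambda$, it lies inside $(V^{\otimes d})_\lambda = M^\lambda$. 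Because $K_{\lambda,\lambda} = 1$, the Specht module $S^\lambda$ appears with multiplicity one in $M^\lambda$, so $W$ must coincide with the unique copy of $S^\lambda$ inside $M^\lambda$, which gives the claim.

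The only real subtlety is the final identification, where one needs the multiplicity-one fact $K_{\lambda,\lambda} = 1$ (already recalled in the excerpt) to conclude that the image of $S^\lambda$ in $M^\lambda$ is uniquely determined and therefore must coincide with the $GL(V)$-highest-weight subspace. Everything else is routine bookkeeping with the standard basis of $V^{\otimes d}$ and the classical Schur-Weyl decomposition.
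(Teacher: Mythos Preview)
Your argument is correct. For the first claim, you and the paper do essentially the same thing: identify the basis of the weight space with functions (or multi-indices), observe that $S_d$ permutes them transitively, and compute the stabiliser as $S_\alpha$.

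For the second claim, your route is genuinely different from the paper's. The paper argues directly: for a raising operator $E_{ij}$ (with $i<j$), applying it to $M^\lambda$ lands in $M^\mu$ where $\mu$ is obtained from $\lambda$ by incrementing $\lambda_i$ and decrementing $\lambda_j$; then $\mu \rhd \lambda$, so $K_{\lambda,\mu}=0$ and hence the composite $S^\lambda \hookrightarrow M^\lambda \xrightarrow{E_{ij}} M^\mu$ vanishes. This uses only the dominance-order vanishing of Kostka numbers. Your approach instead invokes Schur--Weyl duality as a black box, identifies the full highest-weight space of weight $\lambda$ in $V^{\otimes d}$ with $S^\lambda$, and then uses $K_{\lambda,\lambda}=1$ to pin it down inside $M^\lambda$. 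Your argument is slicker and in fact yields the slightly sharper statement that $S^\lambda$ is \emph{exactly} the highest-weight space of weight $\lambda$; the paper's argument is more elementary and avoids importing the Schur--Weyl decomposition, staying within the tools (Kostka numbers, dominance order) already set up in the preliminaries.
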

\begin{proof}
It is clear that $(V^{\otimes d})_\alpha$ is closed under the $S_d$ action, because there was no restriction on the ordering of the $i_j$. Moreover, the relevant set of multi-indices $(i_1, i_2, \ldots, i_d)$ has a transitive action of $S_d$ and the stabiliser of the unique sorted multi-index is precisely $S_\alpha$. This means the corresponding permutation representation is $M^\alpha$.
\newline \newline \noindent
Let us consider the action of $\Id + E_{ij} \in GL(V)$ on $M^\lambda$, where $E_{ij}$ is an elementary matrix with $i > j$. The effect of $E_{ij}$ is to replace $v_j$ with $v_i$, which takes an element in $M^\lambda$ to $M^\mu$, where $\mu$ is obtained from $\lambda$ by decrementing $\lambda_j$ and incrementing $\lambda_i$. Because $\mu \unrhd \lambda$ and $\mu \neq \lambda$, the Kostka number $K_{\lambda, \mu}$ is zero. Hence, there is no copy of $S^\lambda$ in $M^\mu$, and so this means that $\Id + E_{ij}$ must fix $S^\lambda$ (viewed as a subset of $M^\lambda$) pointwise. 
\end{proof}

\begin{proposition} \label{schur_basis_theorem}
There is a basis $\xi_q$ of $\Hom_{S_d}(M^\alpha, M^\beta)$ indexed by double cosets $q \in S_{\alpha} \backslash S_r / S_{\beta}$, where $\alpha, \beta \models d$. Moreover, if $M^\alpha$ is viewed as the permutation representation on cosets of $S_\alpha$, and similarly for $M^\beta$, $\xi_q$ acts on $gS_\alpha$ by sending it to $gS_\alpha q S_\beta$, where this expression is to be interpreted as the sum of its constituent $S_\beta$ cosets.
\end{proposition}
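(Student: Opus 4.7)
The plan is to identify $\Hom_{S_d}(M^\alpha, M^\beta)$ with the $S_\alpha$-invariants of $M^\beta$ via Frobenius reciprocity, and then decompose those invariants into a sum of $S_\alpha$-orbit sums, each labelled by a double coset.

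First, because $M^\alpha = \Ind_{S_\alpha}^{S_d} R$, Frobenius reciprocity furnishes a natural $R$-module isomorphism
\[
\Hom_{S_d}(M^\alpha, M^\beta) \;\cong\; \Hom_{S_\alpha}(R, \Res_{S_\alpha}^{S_d} M^\beta) \;=\; (M^\beta)^{S_\alpha},
\]
realised by $\phi \mapsto \phi(S_\alpha)$, where $S_\alpha \in S_d/S_\alpha$ denotes the identity coset. Since $M^\beta$ is a free $R$-module on the left cosets $gS_\beta$, the invariants $(M^\beta)^{S_\alpha}$ are a free $R$-module on the sums over $S_\alpha$-orbits of $S_d/S_\beta$.

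Next, I would identify these orbits with double cosets. The $S_\alpha$-orbit of $gS_\beta$ in $S_d/S_\beta$ consists exactly of those cosets $hS_\beta$ with $h \in S_\alpha g S_\beta$, i.e. the set of $S_\beta$-cosets contained in the double coset $S_\alpha g S_\beta$. Hence the orbits are in bijection with $S_\alpha \backslash S_d / S_\beta$: for each double coset representative $q$, the element
\[
m_q \;=\; \sum_{\substack{hS_\beta \subseteq S_\alpha q S_\beta}} hS_\beta \;\in\; M^\beta
\]
is $S_\alpha$-invariant, and $\{m_q\}_q$ is an $R$-basis of $(M^\beta)^{S_\alpha}$.

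Finally, I would transport the basis back through Frobenius reciprocity. The morphism $\xi_q : M^\alpha \to M^\beta$ corresponding to $m_q$ is characterised by $\xi_q(S_\alpha) = m_q$ and $S_d$-equivariance, so that $\xi_q(gS_\alpha) = g \cdot m_q$, which is precisely the sum of the $S_\beta$-cosets contained in $gS_\alpha q S_\beta$, matching the statement. There is no real obstacle here; the only substantive input is the orbit-to-double-coset bijection in the middle step, and everything else is the standard adjunction together with the fact that induction from the trivial module produces a free $R$-module with basis the coset space.
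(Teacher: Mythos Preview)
Your argument is correct. The paper takes a slightly different but closely related route: rather than applying Frobenius reciprocity and then analysing $S_\alpha$-orbits on $S_d/S_\beta$, it identifies $\Hom_{S_d}(M^\alpha,M^\beta)$ with $((M^\alpha)^* \otimes M^\beta)^{S_d}$ and applies the Mackey formula to the tensor product $\Ind_{S_\alpha}^{S_d}(R) \otimes \Ind_{S_\beta}^{S_d}(R)$, obtaining $\bigoplus_{q} \Ind_{S_\alpha \cap qS_\beta q^{-1}}^{S_d}(R)$ and then taking $S_d$-invariants. Your approach is more direct and makes the explicit description of $\xi_q$ on cosets fall out with no extra work; the paper's Mackey computation has the advantage that the same calculation immediately yields the decomposition $M^\alpha \otimes M^\beta = \bigoplus_q M^q$, which is used repeatedly afterwards.
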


\begin{proof}
We apply the Mackey formula for induced representations. Below, $R$ indicates a trivial representation. Note that $M^* = \Hom_R(M, R)$, and $R^* = R$.
\begin{eqnarray*}
\Hom_{S_d}(M^\alpha, M^\beta) &=& ((M^\alpha)^* \otimes M^\beta )^{S_d} \\
&=& (\Ind_{S_\alpha}^{S_d}(R) \otimes \Ind_{S_\beta}^{S_d}(R))^{S_d} \\
&=& \left( \bigoplus_{q \in S_\alpha \backslash S_d / S_\beta} \Ind_{S_\alpha \cap q S_\beta q^{-1}}^{S_d}(R \otimes qR)\right)^{S_d} \\
&=&  \bigoplus_{q \in S_\alpha \backslash S_d / S_\beta} \Ind_{S_\alpha \cap q S_\beta q^{-1}}^{S_d}(R)^{S_d} \\
&=& \bigoplus_{q \in S_\alpha \backslash S_d / S_\beta} R.
\end{eqnarray*}
We let $\xi_q$ be the generator of the summand associated to the double-coset defined by $q$. The claim about the action follows by retracing the construction of $\xi_q$.
\end{proof}

\begin{remark}
An identical calculation shows that
\[
M^\alpha \otimes M^\beta  = \bigoplus_{q \in S_\alpha \backslash S_d / S_\beta} M^{q},
\]
where $q$ is viewed as a composition whose parts are the entries of the matrix associated to the double-coset defined by $q$. It will be convenient to use the notation $M^q$ when considering tensor products.
\end{remark}

\begin{remark} \label{associator_remark}
To understand the associator corresponding to this tensor product, we consider $M^\alpha \otimes M^\beta \otimes M^\gamma$. Regardless of how the tensor product is parenthesised, we obtain the sum of $M^\theta$, where $\theta = (\theta_{ijk})$ is a $3$-tensor satisfying the following conditions:
\begin{eqnarray*}
\sum_{ij} \theta_{ijk} &=& \gamma_k \\
\sum_{ik} \theta_{ijk} &=& \beta_j \\
\sum_{jk} \theta_{ijk} &=& \alpha_i.
\end{eqnarray*}
When we consider $(M^\alpha \otimes M^\beta) \otimes M^\gamma$, $\theta_{ijk}$ is viewed as a matrix whose rows are indexed by pairs $(i,j)$ and whose columns are indexed by $k$, where the pair $(i,j)$ corresponds to the $(i,j)$-th entry of the matrix representing a $(S_\alpha, S_\beta)$-double coset. When we consider $M^\alpha \otimes (M^\beta \otimes M^\gamma)$, $\theta_{ijk}$ is viewed as a matrix whose rows are indexed by $i$ and whose columns are indexed by pairs $(j,k)$ corresponding to a $(S_\beta, S_\gamma)$-double coset.
\end{remark}

\begin{proposition}
There is a basis $\xi_q$ of $S(n,d)$ indexed by double cosets $q \in S_{\alpha} \backslash S_d / S_{\beta}$, where $\alpha, \beta$ vary over all compositions of $d$ into $n$ parts (where parts may have size zero).
\end{proposition}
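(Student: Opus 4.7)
The plan is to combine the weight space decomposition of $V^{\otimes d}$ with the previous proposition on $\Hom$ bases. First, I would decompose $V^{\otimes d}$ as a direct sum of weight spaces for the diagonal torus action:
\[
V^{\otimes d} = \bigoplus_{\alpha \in \Lambda(n,d)} (V^{\otimes d})_\alpha.
\]
This decomposition makes sense over the arbitrary base ring $R$ because each weight space is spanned by a subset of the standard monomials $v_{i_1} \otimes \cdots \otimes v_{i_d}$. The compositions that appear are precisely those in $\Lambda(n,d)$: every multi-index uses symbols from $\{1,\ldots,n\}$ and has length $d$, so the weights are compositions of $d$ into at most $n$ parts.

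Next, I would invoke the previous lemma, which both shows that $(V^{\otimes d})_\alpha$ is $S_d$-stable and identifies it as an $S_d$-module with $M^\alpha$. Consequently,
\[
S(n,d) = \End_{S_d}(V^{\otimes d}) = \bigoplus_{\alpha,\beta \in \Lambda(n,d)} \Hom_{S_d}\!\bigl((V^{\otimes d})_\alpha,\,(V^{\otimes d})_\beta\bigr) \cong \bigoplus_{\alpha,\beta \in \Lambda(n,d)} \Hom_{S_d}(M^\alpha, M^\beta).
\]

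Finally, I would apply Proposition~\ref{schur_basis_theorem} to each summand: for fixed $\alpha, \beta \in \Lambda(n,d)$, the space $\Hom_{S_d}(M^\alpha, M^\beta)$ has a basis $\{\xi_q\}$ indexed by double cosets $q \in S_\alpha \backslash S_d / S_\beta$. Taking the union of these bases over all pairs $(\alpha,\beta)$ produces a basis of $S(n,d)$ indexed by the claimed double coset data.

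There is no real obstacle here; the statement is essentially a packaging of the preceding results. The only thing to be careful about is justifying that the weight space decomposition makes sense integrally and is respected by $S_d$, which is immediate from the definition of the weight spaces as spans of tensor monomials and the fact that $S_d$ permutes the tensor factors without changing the multiset of indices appearing.
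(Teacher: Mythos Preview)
Your proposal is correct and follows essentially the same approach as the paper: write the weight space decomposition $V^{\otimes d} = \bigoplus_{\alpha \in \Lambda(n,d)} M^\alpha$ as $S_d$-modules, then apply Proposition~\ref{schur_basis_theorem} (via the Mackey formula) to each $\Hom_{S_d}(M^\alpha, M^\beta)$. The paper also notes that this is a special case of a more general result of Dipper--James for $q$-Schur algebras, but the argument given there is the one you have written out.
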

\begin{proof}
This is a special case of Theorem 3.4 of \cite{DJ}, where the calculation is done in the more general setting of Iwahori-Hecke algebras (and $q$-Schur algebras) rather than symmetric groups. Firstly we write the weight space decomposition (an isomorphism of $S_d$-modules):
\[
V^{\otimes d} = \bigoplus_{\alpha \models d} M^\alpha.
\]
Then, to calculate the $S_d$ commutant, we apply the Mackey formula for induced representations in Proposition \ref{schur_basis_theorem}.
\end{proof}


\noindent
The following formula can be found in Proposition 2.3 of \cite{SY}.
\begin{proposition}\label{composition_rule}
Let $r$ and $s$ be double-cosets for arbitrary Young subgroups of $S_d$. We have $\xi_{r} \xi_{s} = \sum_{q} C_{r, s}^{q} \xi_{q}$, where $C_{r, s}^{q}$ is defined as follows. Let $A = (a_{ijk})$ be a $n \times n \times n$ 3-tensor with entries in $\mathbb{Z}_{\geq 0}$. We require $A$ to satisfy
\begin{eqnarray*}
\sum_{k}a_{ijk} &=& r_{ij} \\
\sum_{j}a_{ijk} &=& q_{ik} \\
\sum_{i}a_{ijk} &=& s_{jk}
\end{eqnarray*}
and we let
\begin{equation} \label{composition_help}
V(A) = \prod_{i,k} \frac{(\sum_j a_{ijk})!}{\prod_j a_{ijk}!}
\end{equation}
Then, summing over $A$ as above,
\begin{equation} 
C_{r,s}^{q} = \sum_{A} V(A).
\end{equation}
\end{proposition}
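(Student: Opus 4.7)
The plan is to realise each permutation module $M^\alpha$ as $R[X_\alpha]$, where $X_\alpha$ is the set of ordered set partitions of $\{1,\ldots,d\}$ of shape $\alpha$ (equivalently $S_d / S_\alpha$). Under this identification an element of $\Hom_{S_d}(M^\alpha, M^\beta)$ is an $S_d$-invariant element of $R[X_\alpha \times X_\beta]$, and combining Proposition \ref{schur_basis_theorem} with Proposition \ref{young_double_cosets} one checks that the basis element $\xi_r$ attached to a double coset $r$ can be taken to be the characteristic function of the $S_d$-orbit $\{(U_\ast, V_\ast) \in X_\alpha \times X_\beta : |U_i \cap V_j| = r_{ij}\}$.

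Under this model, composition of morphisms becomes convolution of orbit characteristic functions:
\[
(\xi_r \xi_s)(U_\ast, W_\ast) \;=\; \sum_{V_\ast \in X_\beta} \xi_r(U_\ast, V_\ast)\, \xi_s(V_\ast, W_\ast).
\]
Both sides are $S_d$-invariant in $(U_\ast, W_\ast)$, so the coefficient $C_{r,s}^q$ of $\xi_q$ equals the number of $V_\ast \in X_\beta$ such that $(U_\ast, V_\ast)$ lies in the orbit of $r$ and $(V_\ast, W_\ast)$ lies in the orbit of $s$, computed for any fixed representative $(U_\ast, W_\ast)$ of the orbit labelled by $q$.

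To carry out the count, assign to each candidate $V_\ast$ the $3$-tensor $A_{ijk} = |U_i \cap V_j \cap W_k|$. Its three families of marginals read exactly $\sum_k A_{ijk} = |U_i \cap V_j| = r_{ij}$, $\sum_i A_{ijk} = |V_j \cap W_k| = s_{jk}$, and $\sum_j A_{ijk} = |U_i \cap W_k| = q_{ik}$, which are precisely the constraints in the proposition. Conversely, for each admissible $A$, the number of $V_\ast$ that realise it is obtained by independently, for each pair $(i,k)$, partitioning the fixed intersection $U_i \cap W_k$ (of known size $q_{ik} = \sum_j A_{ijk}$) into ordered blocks indexed by $j$ of sizes $A_{ijk}$. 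This is the multinomial coefficient $(\sum_j A_{ijk})!/\prod_j A_{ijk}!$, and taking the product over $(i,k)$ yields exactly $V(A)$. Summing over all admissible $A$ produces the stated formula $C_{r,s}^q = \sum_A V(A)$.

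The main obstacle is bookkeeping rather than mathematics: one must pin down the dictionary between double-coset matrices and pairs of ordered set partitions so that the composition convention used for $\xi_r \xi_s$ is compatible with the index labelling in $r_{ij}$, $s_{jk}$, $q_{ik}$, and verify that convolution of orbit characteristic functions genuinely agrees with composition inside $\End(V^{\otimes d})$. Once this dictionary is aligned with Proposition \ref{schur_basis_theorem}, the combinatorial heart of the proof is the routine multinomial count above.
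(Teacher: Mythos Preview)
Your argument is correct and follows essentially the same route as the paper's proof. Both identify the coefficient $C_{r,s}^q$ as a count of intermediate configurations (your ordered set partitions $V_\ast$, the paper's ``elements passing through intermediate factor groups $S_{\beta_j}$''), stratify that count by the triple-intersection 3-tensor $A_{ijk}$, and finish with the multinomial product; your convolution-of-orbit-indicators packaging is a clean reformulation of the paper's coset description of $\xi_q$ from Proposition~\ref{schur_basis_theorem}.
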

\begin{proof}
Suppose that $r$ is a $(S_{\beta^\prime},S_\gamma)$-double coset representative, and $s$ is a $(S_\alpha,S_{\beta})$-double coset representative. Because $\beta_j^\prime = \sum_{i} r_{ij} = \sum_{i,k} a_{ijk} = \sum_{k} s_{jk} = \beta_j$, it immediately follows that if $\beta \neq \beta^\prime$, there can be no such $a_{ijk}$. In this case, the sum is empty and the formula correctly gives that $\xi_{r} \xi_{s}=0$.
\newline \newline \noindent
If $\beta = \beta^\prime$, we may use the formula from the proof of Proposition \ref{schur_basis_theorem} to identify the action of the product $\xi_{r} \xi_{s}$ on $M^\alpha$. To this end, we recall that  a $(S_\alpha, S_\beta)$-double coset was given by the data of how many elements in $\{1,2,\ldots,n\}$ in the subset permuted by the factor group $S_{\beta_j}$ are mapped to elements permuted by the factor group $S_{\alpha_k}$. Upon composing this with another map coming from a $(S_\beta, S_\gamma)$-double coset, an element permuted by the factor group $S_{\gamma_i}$ could be mapped to an element permuted by $S_{\alpha_k}$ via any intermediate factor group $S_{\beta_j}$. Suppose there are $a_{ijk}$ such elements, and let us restrict our attention to just the subset of $\{1,2,\ldots,n\}$ permuted by $S_{\gamma_i}$ that is mapped into the subset permuted by $S_{\alpha_k}$. Among the $(\sum_j a_{ijk})!$ possible maps arising from permutations, the latent $S_{\beta_j}$ actions render elements mapping through the elements permuted by $S_{\beta_j}$ indistinguishable, thus we must divide by $\prod_j a_{ijk}!$.
Taking the product over all possible $i$ and $k$, we obtain
\[
\prod_{i,k} \frac{(\sum_j a_{ijk})!}{\prod_j a_{ijk}!}.
\]
\end{proof}

\begin{proposition} \label{alt_pres}
An alternative description of $S(n,d)$ is as follows. Consider the set $A(n,d)$ of degree $d$ integer polynomials in the variables $x_{ij}$, where $1 \leq i, j \leq n$. This is a coalgebra with comultiplication defined by $\Delta(x_{ij}) = \sum_{k=1}^n x_{ik} \otimes x_{kj}$ and counit $\varepsilon(x_{ij}) = \delta_{ij}$ (extended multiplicatively to polynomials of degree $d$). The dual $\Hom_R(A(n,d), R)$ is isomorphic to the Schur algebra via the map sending the basis dual to $\prod_{i,j=1}^n x_{ij}^{q_{ij}}$ to $\xi_{q_{ij}}$.
\end{proposition}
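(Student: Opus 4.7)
The plan is to identify both $R$-algebras with the free $R$-module on a common indexing set and verify that the multiplications agree. The coalgebra $A(n,d)$ has an $R$-basis consisting of monomials $\prod_{i,j} x_{ij}^{q_{ij}}$ indexed by $n\times n$ matrices $q = (q_{ij})$ with entries in $\mathbb{Z}_{\geq 0}$ summing to $d$; its dual therefore carries a dual basis $\{\xi_q^*\}$. By Proposition \ref{young_double_cosets}, such matrices biject with the double cosets in $S_\alpha\backslash S_d / S_\beta$ as $\alpha,\beta$ range over compositions of $d$ into at most $n$ parts, with $\alpha$ and $\beta$ recording the column and row sums of $q$. Thus $\xi_q^* \mapsto \xi_q$ is a bijection of bases, and it remains to check compatibility with the algebra structures.

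The computational heart of the argument is to expand $\Delta$ on a monomial. Applying $\Delta(x_{ik}) = \sum_j x_{ij}\otimes x_{jk}$ multiplicatively and then using the multinomial theorem yields
\[
\Delta\!\left(\prod_{i,k} x_{ik}^{q_{ik}}\right) = \sum_{A} \left(\prod_{i,k} \frac{q_{ik}!}{\prod_j a_{ijk}!}\right) \left(\prod_{i,j} x_{ij}^{\sum_k a_{ijk}}\right) \otimes \left(\prod_{j,k} x_{jk}^{\sum_i a_{ijk}}\right),
\]
where the sum is over nonnegative $n\times n\times n$ tensors $A=(a_{ijk})$ with $\sum_j a_{ijk} = q_{ik}$. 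Reading off the coefficient of a specific tensor of monomials $\bigl(\prod_{i,j}x_{ij}^{r_{ij}}\bigr) \otimes \bigl(\prod_{j,k}x_{jk}^{s_{jk}}\bigr)$ imposes the two remaining marginal conditions $\sum_k a_{ijk} = r_{ij}$ and $\sum_i a_{ijk} = s_{jk}$, and the coefficient becomes exactly $V(A) = \prod_{i,k} q_{ik}!/\prod_j a_{ijk}!$ from \ref{composition_help}.

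Since the dual product is $(\xi_r^* \cdot \xi_s^*)(f) = (\xi_r^* \otimes \xi_s^*)(\Delta f)$, the previous paragraph shows that the coefficient of $\xi_q^*$ in $\xi_r^* \cdot \xi_s^*$ equals $\sum_A V(A) = C_{r,s}^q$, matching Proposition \ref{composition_rule}. Hence the basis bijection intertwines the two multiplications. A brief check on the counit, $\varepsilon\bigl(\prod x_{ij}^{q_{ij}}\bigr) = \prod \delta_{ij}^{q_{ij}}$, shows that $\varepsilon$ corresponds to the sum of $\xi_q^*$ over diagonal matrices $q$, which is the identity endomorphism of $V^{\otimes d}$ in $S(n,d)$ (the sum of the identities on the weight spaces $M^\alpha$). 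Therefore $\xi_q^* \mapsto \xi_q$ is an $R$-algebra isomorphism.

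The only real obstacle is bookkeeping: aligning the triple of indices $(i,j,k)$ between the coalgebra side, where $j$ is contracted in $\Delta(x_{ik}) = \sum_j x_{ij}\otimes x_{jk}$, and the Schur-algebra side, where $j$ indexes the intermediate Young subgroup in the composition rule. Once the conventions are matched to those of Propositions \ref{young_double_cosets} and \ref{composition_rule}, the multinomial identity is essentially the entire content of the proof.
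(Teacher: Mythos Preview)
Your proof is correct and follows the same approach as the paper: both verify that the structure constants in the dual of $A(n,d)$ coincide with the $C_{r,s}^q$ of Proposition~\ref{composition_rule}. The paper simply cites Green \cite{Green}, Section 2.3, for this computation, whereas you have written out the multinomial expansion of $\Delta$ explicitly; what you have is essentially the content of that reference.
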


\begin{proof}
This definition is used in Section 2.3 of \cite{Green}, where the structure constants are explicitly calculated, and agree with those given in Proposition \ref{composition_rule}.
\end{proof}
\noindent
In the special case where $R = \mathbb{Q}$, we will use the following theorem.


\begin{lemma} \label{eigenvalue_lemma}
Consider the one-dimensional space $\Hom_{\mathbb{Q}S_d}(S^\lambda, M^\lambda)$, which is a left module for $\End_{\mathbb{Q}S_d}(M^\lambda)$. Let $c_q$ be the scalar by which the element $\xi_{q}$ acts on this space. Then,
\[
\sum_{q} (\prod_{ij} x_{ij}^{q_{ij}}) c_{q} = \det(x_{11})^{\lambda_1-\lambda_2} \det
\left( {\begin{array}{cc}
   x_{11} & x_{12} \\
   x_{21} & x_{22}
  \end{array} } \right)^{\lambda_2 - \lambda_3} \cdots \det(X)^{\lambda_d}
\]
where $X$ is the matrix $(x_{ij})_{i,j=1}^d$. An identical formula holds for the right $\End_{\mathbb{Q}S_d}(M^\lambda)$-module $\Hom_{\mathbb{Q}S_d}(M^\lambda, S^\lambda)$.
\end{lemma}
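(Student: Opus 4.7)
The plan is to evaluate $X^{\otimes d} \in S_{\mathbb{Q}}(d,d)$ on a highest weight vector $v_\lambda \in S^\lambda \subset M^\lambda$ in two ways and equate the projections onto the $\lambda$-weight space $M^\lambda$. Using the duality of Proposition \ref{alt_pres}, $X^{\otimes d}$ expands (as an $\End(V^{\otimes d})$-valued polynomial in the $x_{ij}$) as $\sum_q \bigl(\prod_{ij} x_{ij}^{q_{ij}}\bigr)\xi_q$, since $\xi_q$ is by construction the element dual to $\prod x_{ij}^{q_{ij}}$. Applied to $v_\lambda$, the term $\xi_q(v_\lambda)$ is nonzero only when $q$ has column sum $\lambda$, and it lies in $M^\lambda$ only when the row sum is also $\lambda$, i.e.\ when $q$ is a $(S_\lambda,S_\lambda)$-double coset. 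For such $q$, $\xi_q \in \End_{\mathbb{Q}S_d}(M^\lambda)$ preserves the $S^\lambda$-isotypic component of $M^\lambda$, which (since $K_{\lambda\lambda}=1$) is precisely $S^\lambda$; by Schur's lemma over $\mathbb{Q}$, $\xi_q|_{S^\lambda}=c_q\cdot\Id$. Hence the $M^\lambda$-projection of $X^{\otimes d} v_\lambda$ is $\sum_q \bigl(\prod x_{ij}^{q_{ij}}\bigr) c_q \cdot v_\lambda$.

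For the second evaluation, I use the generic $LDU$ decomposition on the Zariski open locus where all leading principal minors $\Delta_k(X)$ are nonzero: write $X=LDU$ with $L$ lower unitriangular, $D$ diagonal with entries $d_i=\Delta_i(X)/\Delta_{i-1}(X)$, and $U$ upper unitriangular. Since $v_\lambda$ is killed by all $GL$-raising operators (by the earlier lemma identifying $S^\lambda \subset M^\lambda$ with the highest-weight vectors of weight $\lambda$), $U^{\otimes d} v_\lambda = v_\lambda$; since $v_\lambda$ has weight $\lambda$, $D^{\otimes d} v_\lambda = \bigl(\prod_i d_i^{\lambda_i}\bigr) v_\lambda$; and $L^{\otimes d} v_\lambda = v_\lambda + (\text{strictly lower-weight terms in dominance order})$, because a lower unitriangular matrix moves weight to higher-indexed coordinates. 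The lower-weight contributions vanish on projecting to $M^\lambda$, so the $M^\lambda$-component of $X^{\otimes d} v_\lambda$ equals $\bigl(\prod_i d_i^{\lambda_i}\bigr) v_\lambda = \prod_k \Delta_k(X)^{\lambda_k-\lambda_{k+1}}\cdot v_\lambda$ by telescoping (with $\lambda_{d+1}=0$, and $\Delta_0=1$), which is exactly the claimed right-hand side.

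Equating the two computations yields the desired formula as an identity of rational functions on the $LDU$-locus, and hence, by polynomiality of both sides in the $x_{ij}$, as an identity in $\mathbb{Q}[x_{ij}]$. The right-module statement for $\Hom_{\mathbb{Q}S_d}(M^\lambda,S^\lambda)$ follows by the analogous argument: if $\pi : M^\lambda\twoheadrightarrow S^\lambda$ is the canonical projection and $\xi_q$ acts from the right via $\pi\mapsto\pi\circ\xi_q$, then evaluating at $v_\lambda$ gives $\pi(\xi_q v_\lambda)=c_q v_\lambda$, which is exactly the scalar just computed. The chief subtlety I anticipate is pinning down the generating function identity $X^{\otimes d} = \sum_q (\prod x_{ij}^{q_{ij}}) \xi_q$ with the normalisations of Proposition \ref{alt_pres}; once that is in place, the rest is standard highest-weight theory together with telescoping of the Gauss decomposition.
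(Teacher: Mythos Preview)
Your proof is correct and follows essentially the same route as the paper: both expand the action of $X$ on $V^{\otimes d}$ as $\sum_q \bigl(\prod_{ij} x_{ij}^{q_{ij}}\bigr)\xi_q$ (the paper cites Green for this, exactly the normalisation subtlety you flag), apply the $LDU$ factorisation on the big Bruhat cell to a highest-weight vector, discard lower-weight terms, and conclude by Zariski density. The only cosmetic difference is that the paper phrases the first step via the surjection $\mathbb{Z}GL(V)\to S_{\mathbb{Z}}(n,d)$ rather than writing $X^{\otimes d}$ directly.
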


\begin{proof}
At the start of Section 2.4 in \cite{Green}, it is explained that the group algebra $\mathbb{Z}GL(V)$ admits a surjective natural map to $S_{\mathbb{Z}}(n,d)$ as follows. We use Proposition \ref{alt_pres}, giving an element of $S_{\mathbb{Z}}(n,d)$ in the form of a linear functional on $A(n,d)$. Let $g = (y_{ij}) \in GL(V)$, then we take
\[
g \mapsto \left (\prod_{ij}x_{ij}^{q_{ij}} \mapsto \prod_{ij}y_{ij}^{q_{ij}} \right) = \sum_{q} \xi_q \prod_{ij} y_{ij}^{q_{ij}},
\]
and extend linearly to $\mathbb{Z}GL(V)$. The action of $\mathbb{Z}GL(V)$ on $V^{\otimes d}$ factors through this map.
\newline \newline \noindent
We may consider a version of the Bruhat decomposition of $GL(V)$ (where $V= \mathbb{Q}^{\oplus n}$):
\[
GL(V) = \coprod_{w \in S_n} B_-wB_+,
\]
where $B_-$ is the Borel subgroup of lower-triangular matrices, while $B_+$ is the Borel subgroup of upper-triangular matrices. On the largest Bruhat cell (corresponding to $w=\Id$), a matrix $X$ admits a factorisation $X=b_- b_+$ (where $b_- \in B_-$ and $b_+ \in B_+$), which we may refine to $X = LDU$, where $L$ is lower-unitriangular, $D$ is diagonal, and $U$ is upper-unitriangular. Moreover, by standard linear algebra, the diagonal entries $D_r$ of $D$ are given by the formula
\[
D_r = \frac{\det((x_{ij})_{i,j=1}^{r})}{\det((x_{ij})_{i,j=1}^{r-1})}.
\]
The action of $X = LDU$ on a highest-weight vector $v$ of weight $\lambda$ can be understood as follows. Firstly, $Uv = v$, because highest-weight vectors are $U$-invariant. The action of $D$ is
\[
Dv = \prod_r D_r^{\lambda_r}v,
\]
because $v$ has weight $\lambda$. Finally, the $L$ term only serves to add terms of lower weights. However, we were only interested in terms $\xi_{q_{ij}} \in \End_{S_d}(M^\lambda)$, so we may discard the vectors of lower weight. This proves the theorem on a Zariski-dense subset of $GL(V)$, hence we obtain the statement of the lemma. The case of right modules is identical.
\end{proof}

\subsection{The Deligne Category}
In this section we work over a commutative ground ring $R$ with a distinguished element $t \in R$. The two main cases of interest are $R = \mathbb{Q}$ with $t \in \mathbb{Z}_{\geq 0}$ and $R = \mathbb{Z}[t]$ (where the distinguished element is the variable $t$). We omit most of the proofs; they can be found in \cite{OC}.

\begin{definition}
A $(m,n)$-partition diagram is a set partition of the set $W_{m,n} = \{1,2,\ldots , m, 1^\prime, 2^\prime, \ldots, n^\prime\}$. We write ${\rm Par}_{m,n}$ for the set of these. We depict such a partition diagram by means of a graph whose vertices are labelled by the set $W_{m,n}$ and whose connected components are the parts of the set partition (although in principle there are many choices of graphs with the same connected components, they are all equivalent for our purposes). For convenience we arrange the vertices into two rows, the first consisting of the unprimed vertices and the second consisting of the primed vertices.
\end{definition}

\begin{example} \label{partition_diagram_example}
The diagram below represents the set partition $\{\{1,2,1^\prime\}, \{3, 2^\prime\}, \{4\}\} \in Par_{4,2}$.
\[
\begin{tikzcd}
1\arrow[r, dash]\arrow[d, dash]& 2 & 3\arrow[ld, dash] & 4\\
1^\prime & 2^\prime
\end{tikzcd}
\]
\end{example}

\begin{definition}
The pre-Deligne category $\underline{{\rm Rep}}_0(S_t)$ is defined as follows. The objects $[n]$ are indexed by $n \in \mathbb{Z}_{\geq 0}$. The morphisms $\Hom([m], [n])$ are $R$-linear combinations of $(m,n)$-partition diagrams. Composition of morphisms is $R$-linear, and the composition of a $(m,n)$-partition diagram $D_1$ with a $(k,m)$-partition diagram $D_2$ is obtained by the following process. Consider the labelled graphs associated to the two partition diagrams; identify the vertices $1,2, \ldots, m$ in the first partition diagram with the elements $1^\prime, 2^\prime, \ldots, m^\prime$ in the second partition diagram, retaining all edges. This yields the graph of a $(k,n)$-partition diagram $D_3$ (containing some of the intermediate vertices which were identified) together with $r$ connected components consisting only of vertices which were identified. We define $D_1 \circ D_2 = t^{r} D_3$.
\end{definition}

\begin{example} \label{partition_composition_example}
We demonstrate composition of morphisms in the pre-Deligne category by composing the partition $\{\{1,2,1^\prime\}, \{3, 2^\prime\}, \{4\}\} \in {\rm Par}_{4,2}$ from Example \ref{partition_diagram_example} with the partition $\{\{1, 1^\prime\}, \{2, 3\}, \{2^\prime, 3^\prime\}, \{4^\prime\}\} \in {\rm Par}_{3,4}$, which corresponds to the following diagram:
\[
\begin{tikzcd}
1\arrow[d, dash]& 2\arrow[r, dash] & 3\\
1^\prime & 2^\prime\arrow[r, dash] & 3^\prime & 4^\prime
\end{tikzcd}.
\]
We now show the two partitions so that the vertices to be merged are vertically adjacent to each other.
\[
\begin{tikzcd}
1\arrow[d, dash]& 2\arrow[r, dash] & 3\\
1^\prime & 2^\prime\arrow[r, dash] & 3^\prime & 4^\prime \\
1\arrow[r, dash]\arrow[d, dash]& 2 & 3\arrow[ld, dash] & 4\\
1^\prime & 2^\prime
\end{tikzcd}
\]
Merging the appropriate vertices gives the following intermediate result, where we label the merged vertices with double primes.
\[
\begin{tikzcd}
1\arrow[d, dash]& 2\arrow[r, dash] & 3\\
1^{\prime \prime}\arrow[r, dash]\arrow[d, dash]& 2^{\prime \prime}\arrow[r, dash] & 3^{\prime \prime}\arrow[ld, dash] & 4^{\prime \prime}\\
1^\prime & 2^\prime
\end{tikzcd}
\]
Noting that we have one connected component not connected to either $1,2,3$ or $1^\prime,2^\prime$ (namely, $\{4^{\prime \prime}\}$), we obtain a scalar factor of $t^1$ when we pass to the induced diagram on $\{1,2,3,1^\prime,2^\prime\}$. Our final result is
\[
t\hspace{2mm} \cdot \hspace{2mm}
\begin{tikzcd}
1\arrow[d, dash]& 2\arrow[r, dash] & 3\\
1^\prime \arrow[r, dash] & 2^\prime
\end{tikzcd}.
\]
\end{example}

\begin{proposition}
There is a symmetric monoidal structure on $\underline{{\rm Rep}}_0(S_t)$ given by $[n]\otimes [m]= [n+m]$ on objects, with the action on morphisms given by concatenation (and relabelling) as follows. If $D_1 \in {\rm Par}_{m_1,n_1}$ and $D_2 \in {\rm Par}_{m_2,n_2}$, then $D_1 \otimes D_2 \in {\rm Par}_{m_1+m_2, n_1+n_2}$ is the diagram where vertices with labels in $\{1,2,\ldots,m_1, 1^\prime, 2^\prime, \ldots, n_1^\prime\}$ have the same connections as $D_1$, while vertices with labels in 
\[
\{m_1+1,m_1+2,\ldots,m_1+m_2, (n_1+1)^\prime, (n_1+2)^\prime, \ldots, (n_1+n_2)^\prime\}
\]
have the same connections as $D_2$ (in the obvious way; biject the vertices to those of $D_2$ by subtracting $m_1$ from vertex labels without primes, and $n_1$ from vertex labels with primes). No edges are included between the vertices corresponding to $D_1$ and the vertices corresponding to $D_2$.
\end{proposition}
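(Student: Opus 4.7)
The plan is to verify the axioms of a (strict) symmetric monoidal category. Since $[n]\otimes[m]=[n+m]$ holds on the nose and concatenation of partition diagrams is strictly associative, both the associator and unit constraints may be taken to be identity morphisms; the only genuine content is (i) that $\otimes$ is a well-defined $R$-bilinear bifunctor on morphisms, and (ii) the existence and coherence of a symmetric braiding. The unit object is $[0]$, and concatenation with the empty diagram is obviously the identity operation on both objects and morphisms.

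For bifunctoriality, I would consider morphisms $f_i\colon[m_i]\to[n_i]$ and $g_i\colon[k_i]\to[m_i]$ for $i=1,2$ and compute $(f_1\otimes f_2)\circ(g_1\otimes g_2)$ directly from the definition of composition by vertex identification. The key observation is that concatenation places the two subdiagrams on disjoint vertex sets with no edges between them, so the gluing along the middle row splits into two independent gluings: one among the vertices coming from $f_1,g_1$ and one among those from $f_2,g_2$. Consequently every connected component of the glued graph lies entirely on one side, and each purely interior component contributes to exactly one of the two scalar factors $t^{r_1}$ and $t^{r_2}$. Multiplying these yields the desired bifunctoriality equality $(f_1\otimes f_2)\circ(g_1\otimes g_2)=(f_1\circ g_1)\otimes(f_2\circ g_2)$. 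Extending $R$-linearly handles arbitrary formal sums, and the identity $\mathrm{id}_{[n]}$, given by the diagram with parts $\{i,i'\}$, clearly satisfies $\mathrm{id}_{[n]}\otimes\mathrm{id}_{[m]}=\mathrm{id}_{[n+m]}$.

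For the braiding, I would define $\beta_{n,m}\colon[n+m]\to[m+n]$ as the partition diagram with parts $\{i,(m+i)'\}$ for $1\le i\le n$ and $\{n+j,j'\}$ for $1\le j\le m$, i.e.\ the diagram of the block swap. Naturality in each slot follows by direct inspection: pre- or post-composing with $\beta$ simply relabels which vertices get identified, introducing no extra loops and therefore no $t$-factors. Involutivity $\beta_{m,n}\circ\beta_{n,m}=\mathrm{id}_{[n+m]}$ is immediate because the composite returns the identity matching with no interior components. Because the associator is the identity, the two hexagon axioms collapse to the equalities $\beta_{n+m,k}=(\mathrm{id}_{[n]}\otimes\beta_{m,k})\circ(\beta_{n,k}\otimes\mathrm{id}_{[m]})$ and its mirror; both sides are verified to be the same block-permutation diagram.

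The main obstacle, which is really a bookkeeping matter rather than a conceptual one, is ensuring that the scalar $t$-powers from isolated components behave correctly under the tensor product. The essential point is that $\otimes$ introduces no edges between the two subdiagrams, so any closed loop formed during a composition of tensor products lies entirely on one side and is counted exactly once. With this observation in hand, all bifunctoriality and coherence statements reduce to transparent combinatorial identities on disjoint sets of vertices.
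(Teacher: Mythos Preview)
The paper does not supply its own proof of this proposition; at the start of the subsection on the Deligne category it states that most proofs are omitted and can be found in \cite{OC}. Your direct verification of the symmetric monoidal axioms is correct and is exactly the standard argument one would give. The crucial point you identify---that concatenation places the two diagrams on disjoint vertex sets with no connecting edges, so that upon composition every interior component lies wholly on one side and the $t$-powers multiply---is precisely what makes bifunctoriality work.

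One small slip: in your hexagon identity the composition is written in the wrong order. With strict associators the axiom reads
\[
\beta_{n+m,k}=(\beta_{n,k}\otimes\mathrm{id}_{[m]})\circ(\mathrm{id}_{[n]}\otimes\beta_{m,k}),
\]
since one first swaps the $m$-block past $k$ and then the $n$-block past $k$. This is only a bookkeeping typo; your description of both sides as the same block-permutation diagram is correct.
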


\begin{remark}
In fact, $\underline{{\rm Rep}}_0(S_t)$ is a rigid symmetric monoidal category, but we will not use this fact. Again, we direct the interested reader to Section 2 of \cite{OC}.
\end{remark}

\begin{definition}
The Deligne category $\underline{{\rm Rep}}(S_t)$ is the Karoubian envelope of the pre-Deligne category $\underline{{\rm Rep}}_0(S_t)$. It inherits the structure of a (rigid) symmetric monoidal category.
\end{definition}

\begin{definition}
Suppose that the distinguished element $t \in R$ is set to be $d \in \mathbb{Z}_{\geq 0}$. There is a symmetric monoidal functor $F_d$ from $\underline{{\rm Rep}}(S_d)$ (the Deligne category with $t=d$) to $R S_d-{\rm mod}$, the category of finitely generated representations of the symmetric group $S_d$ over $R$. This functor is defined first for $\underline{{\rm Rep}}_0(S_t)$, and then we pass to the Karoubian envelope. Let $V$ be the permutation representation of $S_d$ on $R^{\oplus d}$. Then, $F_d([m]) = V^{\otimes m}$. To define the action of $F_n$ on morphisms, let $v_1, v_2, \ldots, v_n$ be the standard basis of $V$. Then a diagram $D \in {\rm Par}_{p,q}$ acts on the pure tensor $v_{i_1} \otimes v_{i_2} \otimes \cdots \otimes v_{i_p}$ to give the sum of all $v_{i_{1^\prime}} \otimes v_{i_{2^\prime}} \otimes \cdots \otimes v_{i_{q^\prime}}$ such that if two vertices $x, y$ (possibly with or without primes) are in the same component of $D$, then $i_x = i_y$. 
\end{definition}

\begin{remark}
Taking the $R$-span of ${\rm Par}_{n,n}$ for a fixed $n \in \mathbb{Z}_{\geq 0}$, we obtain an associative algebra (depending on the parameter $t$) using this multiplication. These algebras are called the partition algebras, and are often denoted ${\rm Par}_n(t)$. By the above,  we have a map ${\rm Par}_n(d) \to \End_{S_d}(V^{\otimes n})$. This has been used to study an analogue of Schur-Weyl duality between symmetric group algebras and partition algebras. It allows for the study of the representations of one algebra in terms of the other. This has been the subject of much work in recent years; for example, \cite{BHH} and \cite{BDO}.
\end{remark}

\begin{example}
We consider the partition $\{\{1, 1^\prime\}, \{2, 3\}, \{2^\prime, 3^\prime\}, \{4^\prime\}\} \in {\rm Par}_{3,4}$ (from Example \ref{partition_composition_example}), whose diagram is
\[
\begin{tikzcd}
1\arrow[d, dash]& 2\arrow[r, dash] & 3\\
1^\prime & 2^\prime\arrow[r, dash] & 3^\prime & 4^\prime
\end{tikzcd}.
\]
This acts on $v_{i_1} \otimes v_{i_2} \otimes v_{i_3}$ to give
\[
\delta_{i_2, i_3} \sum_{i_{2^\prime}=1}^n \sum_{i_{4^\prime}=1}^n v_{i_1} \otimes v_{i_{2^\prime}} \otimes v_{i_{2^\prime}} \otimes v_{i_{4^\prime}}
\]
where we point out that unless $i_2=i_3$, the required condition on the component $\{2,3\}$ is not satisfied. Similarly, only terms where the tensor factors corresponding to $2^\prime$ and $3^\prime$ have the same label arise.
\end{example}

\begin{proposition}
Suppose that we take $R$ to be a field of characteristic zero. Then the indecomposable objects $X_\mu$ are indexed by the set of all partitions $\mu$. Further, $F_d(X_\mu)$ is equal to $S^{(d - |\mu|, \mu)}$ if $d - |\mu| \geq \mu_1$ (so that this defines a valid partition), and $F_d(X_\mu) = 0$ otherwise.
\end{proposition}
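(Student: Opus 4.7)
The plan is to classify indecomposable objects of $\underline{{\rm Rep}}(S_t)$ by studying primitive idempotents in the partition algebras $\End([n]) = {\rm Par}_n(t)$. Because $\underline{{\rm Rep}}(S_t)$ is the Karoubian envelope of $\underline{{\rm Rep}}_0(S_t)$, every indecomposable is the image of such an idempotent, and the classification reduces to enumerating conjugacy classes of primitive idempotents across all $n$, identifying those that yield isomorphic objects via the embedding $[n-1] \hookrightarrow [n]$ obtained by adjoining an isolated vertex.

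I would begin with the generic case $R = \mathbb{Q}(t)$. The partition algebra ${\rm Par}_n(t)$ admits a cellular structure coming from the filtration by propagating number (the number of components of a diagram containing both an unprimed and a primed vertex), with cell modules indexed by partitions $\mu$ satisfying $|\mu| \leq n$. Generically this algebra is semisimple, and the simples that do not already appear at level $n-1$ are those with $|\mu| = n$. Thus there is a distinct indecomposable $X_\mu$ for every partition $\mu$, first occurring as a summand of $[|\mu|]$. Idempotent lifting transfers the classification to any characteristic-zero field and any value of $t$.

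For the computation of $F_d(X_\mu)$, use that $F_d([n]) = V^{\otimes n}$ and that the action of ${\rm Par}_n(d)$ on this space factors through the centralizer $\End_{S_d}(V^{\otimes n})$ via Schur-Weyl duality for partition algebras. Decomposing $V^{\otimes n}$ over $S_d$ via orbits on $[d]^n$, each set partition of $[n]$ into $k$ blocks contributes an induced copy of $M^{(d-k,1^k)}$, whose Specht constituents are the $S^\lambda$ with $\lambda_1 \geq d-k$. Summing over $k \leq n$, the irreducible $S_d$-summands of $V^{\otimes n}$ are precisely the Specht modules $S^{(d-|\mu|, \mu)}$ with $|\mu| \leq n$ and $d - |\mu| \geq \mu_1$. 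Under the duality, the primitive idempotent cutting $X_\mu$ out of $[n]$ projects to the isotypic component of $S^{(d-|\mu|, \mu)}$ when this partition is valid, and maps to zero otherwise. The main obstacle is the vanishing half: showing that the idempotent producing $X_\mu$, which remains nonzero in ${\rm Par}_n(t)$ at $t = d$, has zero image in $\End_{S_d}(V^{\otimes n})$ exactly when $d - |\mu| < \mu_1$. This Schur-Weyl kernel computation is the central technical step, carried out in detail in \cite{OC}.
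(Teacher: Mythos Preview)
The paper does not actually prove this proposition: at the start of the subsection on the Deligne category it states that most proofs are omitted and refers to \cite{OC}, and this proposition is one of those stated without proof. Your sketch is a faithful outline of the argument in \cite{OC} (filtration of ${\rm Par}_n(t)$ by propagating number, generic semisimplicity plus idempotent lifting for the classification, and the partition-algebra Schur--Weyl duality on $V^{\otimes n}$ for computing $F_d(X_\mu)$), so it is both correct and aligned with the reference the paper cites.
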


\noindent
The key fact relating Deligne categories to representations of symmetric groups is the following proposition.

\begin{proposition}
When $R = \mathbb{C}$ (or any field of characteristic zero), $F_d$ is essentially surjective and full.
\end{proposition}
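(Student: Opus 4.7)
My plan is to derive essential surjectivity directly from the preceding classification of indecomposable objects, and then to reduce fullness to a concrete spanning statement about partition diagrams acting on $S_d$-invariants in tensor powers of $V$, which I will settle via M\"obius inversion on the lattice of set partitions.

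For essential surjectivity, every irreducible $\mathbb{C}S_d$-module is a Specht module $S^\lambda$ with $\lambda \vdash d$. Writing $\lambda = (\lambda_1, \nu)$ where $\nu = (\lambda_2, \lambda_3, \ldots)$, the inequality $\lambda_1 \geq \nu_1$ rewrites as $d - |\nu| \geq \nu_1$, so the preceding proposition gives $F_d(X_\nu) \cong S^\lambda$. Since $\mathbb{C}S_d\mbox{-mod}$ is semisimple and $\underline{{\rm Rep}}(S_d)$ is Karoubian, every finitely generated $\mathbb{C}S_d$-module lies in the essential image of $F_d$.

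For fullness, I would first use the universal property of the Karoubian envelope to reduce to the statement that $F_d$ is full on $\underline{{\rm Rep}}_0(S_d)$. Exploiting the self-duality $V \cong V^*$ (given by the basis $\{v_i\}$) together with the analogous self-duality of $[m]$ in $\underline{{\rm Rep}}_0(S_d)$ (implemented by the ``cup'' and ``cap'' diagrams that reinterpret the bottom row of a partition diagram as part of the top row), the problem reduces further to showing that for every $k \geq 0$ the map from the $\mathbb{C}$-span of ${\rm Par}_{0,k}$ to $(V^{\otimes k})^{S_d}$ induced by $F_d$ is surjective. A direct unwinding of the definition of $F_d$ should show that for any set partition $D$ of $\{1', \ldots, k'\}$,
\[
F_d(D)(1) = \sum_{\substack{\pi \succeq D \\ |\pi| \leq d}} e_\pi,
\]
where $\pi$ ranges over coarsenings of $D$ having at most $d$ parts, and $e_\pi$ denotes the $S_d$-orbit sum of any basis vector $v_{i_{1'}} \otimes \cdots \otimes v_{i_{k'}}$ whose fiber partition is $\pi$. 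The $e_\pi$ with $|\pi| \leq d$ form a basis of $(V^{\otimes k})^{S_d}$, so surjectivity will amount to expressing each such $e_\pi$ as a linear combination of the $F_d(D)(1)$.

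I expect the hard part to be this last inversion, together with the book-keeping imposed by the constraint $|\pi| \leq d$, but I believe it can be handled cleanly by M\"obius inversion on the lattice of set partitions of $\{1', \ldots, k'\}$ ordered by refinement. Specifically, since no basis vector has fiber partition with more than $d$ parts, one may consistently set $e_\pi := 0$ whenever $|\pi| > d$, extending the displayed identity to a sum over all $\pi \succeq D$; standard M\"obius inversion then expresses every $e_\pi$ as a rational linear combination of the $F_d(D)(1)$, completing the proof.
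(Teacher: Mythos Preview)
Your argument is correct. The paper itself does not prove this proposition, deferring to Comes--Ostrik, but the machinery it introduces immediately afterward amounts to your M\"obius inversion carried out in greater generality. The elements $x_D$ are defined precisely by the recursion $D=\sum_{E\text{ coarsens }D} x_E$, and the subsequent proposition on the action of $F_d(x_D)$ identifies $F_d(x_D)$ with the ``orbit indicator'' map picking out tuples whose fiber partition is exactly~$D$. Since the $S_d$-orbits on the standard basis of $V^{\otimes m}\otimes (V^{\otimes n})^{*}$ are indexed by set partitions of $\{1,\dots,m,1',\dots,n'\}$ with at most $d$ parts, these $F_d(x_D)$ already form a basis of $\Hom_{S_d}(V^{\otimes m},V^{\otimes n})$ for every pair $(m,n)$. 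Thus your self-duality reduction to $\Hom([0],[k])$, while valid, is unnecessary: the same inversion works directly on the partition lattice of $\{1,\dots,m,1',\dots,n'\}$, and this is exactly what the $x_D$ basis packages.
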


\begin{definition}
Suppose that $D$ and $E$ are set partitions of a set $X$.
For $D \in {\rm Par}_{p,q}$, define $x_D \in R \hspace{1mm} {\rm Par}_{p,q}$ via the recurrence 
\[
D = \sum_{\mbox{$E$ a coarsening of $D$}} x_E.
\]
\end{definition}
\noindent
Thus, the $x_D$ can be calculated by a recursion over the poset of partition diagrams, ordered by coarsening. In particular, they also form a basis of morphism spaces (the transition matrix to this basis is upper unitriangular). The following proposition about how the $x_D$ act on $V^{\otimes p}$ follows from an inclusion-exclusion argument.
\begin{proposition} \label{x_D_action}
The element $F_d(x_D)$ acts on the pure tensor $v_{i_1} \otimes v_{i_2} \otimes \cdots \otimes v_{i_p}$ to give the sum of all $v_{i_{1^\prime}} \otimes v_{i_{2^\prime}} \otimes \cdots \otimes v_{i_{q^\prime}}$ such that two vertices $x, y$ (possibly with or without primes) are in the same component of $D$ if and only if $i_x = i_y$. 
\end{proposition}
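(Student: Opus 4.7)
The plan is to run a Möbius inversion on the poset $\mathcal{P}$ of set partitions of $W_{p,q}$ ordered by coarsening: write $D \preceq E$ to mean that $E$ is a coarsening of $D$. I would first rephrase the action of $F_d(D)$ itself in terms of labelings. Packaging the input indices $(i_1,\ldots,i_p)$ and an output $(i_{1'},\ldots,i_{q'})$ into a single function $\phi : W_{p,q} \to \{1,\ldots,d\}$, and letting $\ker(\phi) \in \mathcal{P}$ denote the set partition whose parts are the fibres of $\phi$, the condition ``vertices in the same component of $D$ share a label'' is exactly $\ker(\phi) \succeq D$. Thus the original definition reads
\[
F_d(D)\bigl(v_{i_1}\otimes\cdots\otimes v_{i_p}\bigr) \;=\; \sum_{\phi\,:\,\ker(\phi)\succeq D} v_{\phi_{1'}}\otimes\cdots\otimes v_{\phi_{q'}},
\]
the sum being over labelings $\phi$ extending the input data.

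Next I would invert the defining recurrence. Since $D = \sum_{E \succeq D} x_E$ is an upper-unitriangular system in $\mathcal{P}$, Möbius inversion gives $x_D = \sum_{E \succeq D} \mu(D,E)\, E$, where $\mu$ is the Möbius function of $\mathcal{P}$. Applying $F_d$ and extracting the coefficient of a fixed output tensor $v_\phi$ yields
\[
\sum_{E \,:\, D \preceq E \preceq \ker(\phi)} \mu(D,E) \;=\; \delta_{D,\,\ker(\phi)},
\]
by the defining property of $\mu$. Hence $v_\phi$ survives in $F_d(x_D)(v_{i_1}\otimes\cdots\otimes v_{i_p})$ with coefficient $1$ exactly when $\ker(\phi) = D$, meaning two vertices of $W_{p,q}$ carry the same label if and only if they lie in the same part of $D$ — which is precisely the statement of the proposition.

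There is no real obstacle here beyond bookkeeping: one must keep the coarsening convention consistent and verify that the summand $\phi$ is well-defined by its restriction to the primed vertices (the input data fix the unprimed vertices, and any $\phi$ contributing to the sum must already respect the condition on the unprimed part of $D$, so the coefficient computation is unambiguous). Everything else is the standard inclusion–exclusion passage from ``same-component implies same-label'' to ``same-component iff same-label''.
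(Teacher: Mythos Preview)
Your argument is correct and is precisely what the paper has in mind: the paper offers no detailed proof, merely remarking that the proposition ``follows from an inclusion-exclusion argument,'' and your M\"obius inversion on the coarsening poset of set partitions is exactly that inclusion-exclusion made explicit. The bookkeeping you flag (that the unprimed labels are fixed by the input, so each output tensor corresponds to a unique $\phi$ and the coefficient computation is well-posed) is the only subtlety, and you handle it correctly.
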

\noindent
The key detail of this proposition is that distinct components in the partition diagram of $D$ are required to have distinct indices labelling them. Note that for $S_d$, there can be at most $d$ distinct labels. This explains the following fact.
\begin{proposition}
The kernel of $F_d$ on $\Hom([m],[n])$ is precisely the span of $x_D$ corresponding to $D$ with more than $d$ connected components. 
\end{proposition}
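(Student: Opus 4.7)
The plan is to prove the two inclusions separately, both leveraging the explicit description of $F_d(x_D)$ in Proposition \ref{x_D_action}. For the easy direction, I would observe that $F_d(x_D) = 0$ whenever $D$ has more than $d$ connected components: the action sends a pure tensor $v_{i_1} \otimes \cdots \otimes v_{i_m}$ to a sum indexed by all extensions to a map $W_{m,n} \to \{1,\ldots, d\}$, $x \mapsto i_x$, whose fibres are \emph{exactly} the parts of $D$, and no such map exists once $D$ has more parts than $d$ has available colours.

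For the reverse inclusion, I would prove linear independence of the images $F_d(x_D)$ as $D$ ranges over set partitions in ${\rm Par}_{m,n}$ with at most $d$ parts. Given such a $D$, I choose a function $f \colon W_{m,n} \to \{1, \ldots, d\}$ which is \emph{adapted} to $D$ in the sense that $f(x) = f(y)$ if and only if $x$ and $y$ lie in the same part of $D$; such an $f$ exists precisely because $D$ has at most $d$ parts. Setting $i_j = f(j)$ and $i_{j'} = f(j')$, I examine the coefficient of $v_{i_{1'}} \otimes \cdots \otimes v_{i_{n'}}$ inside $F_d(x_{D'})(v_{i_1} \otimes \cdots \otimes v_{i_m})$. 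By Proposition \ref{x_D_action}, this coefficient equals $1$ when $D' = D$ (since the equality pattern of the chosen labels is exactly $D$) and $0$ for every other partition diagram $D'$ (since the equality pattern cannot simultaneously be both $D$ and $D' \neq D$).

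Combining the two observations finishes the argument: if $y = \sum_D c_D x_D$ lies in $\ker F_d$, then $\sum_D c_D F_d(x_D) = 0$, and the distinguishing matrix coefficient above forces $c_D = 0$ for every $D$ with at most $d$ parts, so $y$ lies in the span of the remaining $x_D$. I do not foresee a genuine obstacle here; the proof is essentially the Möbius-inverted counterpart of the inclusion-exclusion definition of the $x_D$. The only subtle point to handle carefully is the existence of an adapted $f$, which is exactly where the constraint on the number of parts of $D$ enters.
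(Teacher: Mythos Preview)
Your argument is correct. The paper does not actually supply a proof of this proposition; it only remarks, just before the statement, that distinct components of $D$ must receive distinct labels and that there are only $d$ labels available, which is precisely your easy inclusion. Your linear-independence argument for the reverse inclusion---picking an $f$ adapted to $D$ and reading off a matrix coefficient that singles out $D$ among all diagrams---is the standard way to complete the proof and is exactly what the paper leaves implicit.
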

\noindent
We now work towards relating the Deligne category, as defined above, to the Schur algebra setting which will be important for us.

\begin{proposition} \label{tensor_pow_dec}
Let us write 
\[
V= \Ind_{S_{d-1} \times S_1}^{S_d} (R) = M^{(d-1,1)},
\]
where $R$ is the trivial representation of $S_{d-1}\times S_1$; $V$ is the usual permutation representation of $S_d$ on $R^{\oplus d}$. Then we may decompose $V^{\otimes r}$ as a sum of permutation modules in the following way:
\[
V^{\otimes r} = 
\bigoplus_{
\substack{
\mbox{{\rm set partitions} $\alpha$ {\rm of} $\{1,2,\ldots,r\}$} \\
\mbox{{\rm into at most $d$ parts}}
}} N^\alpha,
\]
where $N^\alpha$ is isomorphic to $M^{(d-l(\alpha), 1^{l(\alpha)})}$.
\end{proposition}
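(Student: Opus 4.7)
The plan is to exhibit the decomposition by sorting the basis of $V^{\otimes r}$ into $S_d$-orbits according to the equality pattern of tensor-factor indices. A basis of $V^{\otimes r}$ is given by pure tensors $v_{i_1} \otimes v_{i_2} \otimes \cdots \otimes v_{i_r}$ with each $i_j \in \{1, 2, \ldots, d\}$. To any such tuple $(i_1, \ldots, i_r)$ I associate the set partition $\alpha(i_1, \ldots, i_r)$ of $\{1, 2, \ldots, r\}$ whose parts are the fibers of $j \mapsto i_j$. Since the $S_d$-action permutes the labels $\{1, \ldots, d\}$ freely, two basis tensors lie in the same $S_d$-orbit if and only if they induce the same set partition. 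Moreover, a set partition $\alpha$ of $\{1, \ldots, r\}$ is realised by some tuple precisely when $l(\alpha) \leq d$.

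For each set partition $\alpha$ with $l(\alpha) \leq d$, let $N^\alpha \subseteq V^{\otimes r}$ be the $R$-span of the pure tensors inducing $\alpha$. By the previous paragraph $N^\alpha$ is $S_d$-stable, and the decomposition
\[
V^{\otimes r} = \bigoplus_{\substack{\alpha \text{ a set partition of } \{1, \ldots, r\} \\ l(\alpha) \leq d}} N^\alpha
\]
follows directly from the basis partitioning.

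It remains to identify $N^\alpha$ with $M^{(d - l(\alpha), 1^{l(\alpha)})}$. Fix a basis tensor in $N^\alpha$, corresponding to a choice of distinct labels $c_1, \ldots, c_{l(\alpha)} \in \{1, \ldots, d\}$ for the parts of $\alpha$. Its stabiliser in $S_d$ is exactly the subgroup fixing each of $c_1, \ldots, c_{l(\alpha)}$, which is $S_{d - l(\alpha)}$ acting on the complementary indices. This subgroup equals the Young subgroup $S_{(d - l(\alpha), 1^{l(\alpha)})}$ (the trailing $S_1$ factors being trivial), so $N^\alpha$ is the permutation module $M^{(d - l(\alpha), 1^{l(\alpha)})}$, as claimed.

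There is no real obstacle here; the only care needed is in noting that distinct parts of $\alpha$ must receive distinct labels (so that the induced set partition is exactly $\alpha$ and not a coarsening of it), which is what pins down the stabiliser as $S_{d - l(\alpha)}$ rather than a larger Young subgroup, and in confirming that set partitions with more than $d$ parts contribute nothing since no tuple with values in $\{1, \ldots, d\}$ can realise them.
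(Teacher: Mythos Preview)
Your proof is correct and follows essentially the same approach as the paper: partition the pure-tensor basis of $V^{\otimes r}$ by the equality pattern of indices (i.e.\ by the induced set partition of $\{1,\ldots,r\}$), observe that each block is a single transitive $S_d$-orbit, and identify the point stabiliser as the Young subgroup $S_{(d-l(\alpha),1^{l(\alpha)})}$. Your write-up is slightly more explicit about why only set partitions with at most $d$ parts occur and why distinct parts force distinct labels, but the argument is the same.
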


\begin{proof}
For any set partition $\alpha$ as above, consider the span of pure tensors $v_{i_1} \otimes v_{i_2} \otimes \cdots \otimes v_{i_r}$, such that $i_p=i_q$ if and only if $p$ and $q$ are in the same part of $\alpha$. Since these vectors are permuted transitively by the implied $S_n$ action (which changes labels of tensor factors, but not positions), we obtain a permutation module. Each of the $l(\alpha)$ parts of $\alpha$ must correspond to a distinct basis vector, of which there are $d$ to choose from. Hence, the module is induced from the trivial representation of $S_{(d-l(\alpha), 1^{l(\alpha)})}$, and is therefore isomorphic to $M^{(d-l(\alpha), 1^{l(\alpha)})}$.
\end{proof}
\noindent
We now seek to understand maps between the summands in this direct sum decomposition. One approach is via Proposition \ref{schur_basis_theorem}. Since $N^\alpha$ and $N^\beta$ are permutation modules induced from Young subgroups, a basis of homomorphisms between them is given by elements $\xi_q$ indexed by $(S_{(d-l(\alpha),1^{l(\alpha))}},S_{(d-l(\beta),1^{l(\beta)})})$-double cosets. Another approach is given by partition diagrams. We now relate partition diagrams to the $\xi_q$.

\begin{definition} \label{D_definition}
Let $\alpha$ and $\beta$ be set partitions of $\{1,2,\ldots,r\}$ and $\{1,2,\ldots,s\}$, respectively, each into at most $n$ parts. Let $q$ be a $(S_{(n-l(\alpha), 1^{l(\alpha)})}, S_{(n-l(\beta),1^{l(\beta)})})$-double coset in $S_d$. We define an $(r,s)$-partition diagram $D(q)$ in the following way. Note that $q$ is uniquely specified by a matching (injective partial function) from the parts of $\alpha$ to the parts of $\beta$; viewing $q = (q_{ij})$ as a matrix, each row except the first and each column except the first must sum to one. So, all their entries except for exactly one are zero. The coordinates of the nonzero entries that are not in the first row or first column define the matching. For $1 \leq i, j \leq r$, let $i$ and $j$ be in the same connected component of $D(q)$ if and only if $i$ and $j$ are in the same part of $\alpha$. Similarly for $1 \leq i,j \leq s$, let $i^\prime$ and $j^\prime$ be in the same connected component of $D(q)$ if and only if $i$ and $j$ are in the same part of $\beta$. Finally, we connect the connected components corresponding to the part $\alpha_i$ and $\beta_j$ if and only if the corresponding entry of $q$ is equal to 1. (That is, $q_{i+1,j+1}=1$, taking into account that the first row and first column of $q$ do not correspond to any part of $\alpha$ or $\beta$).
\end{definition}

\begin{example}
Suppose that $q$ is the $(S_{8} \times S_{1}^2, S_{7} \times S_{1}^3)$-double coset indexed by the following matrix.
\[
\left( \begin{array}{ccc}
6 & 0 & 1 \\
1 & 0 & 0\\
0 & 1 & 0\\
1 & 0 & 0
\end{array} \right).
\]
Then, $D(q)$ is the following partition diagram.
\[
\begin{tikzcd}
1\arrow[dr, dash]& 2 \\
1^\prime & 2^\prime & 3^\prime
\end{tikzcd}.
\]
\end{example}

\begin{proposition}
Given a $(S_{(d-l(\alpha), 1^{l(\alpha)})}, S_{(d-l(\beta),1^{l(\beta)})})$-double coset $q$ in $S_d$, the two maps $N^\alpha \to N^\beta$ defined by $\xi_q$ and $F_n(x_{D(q)})$ are equal.
\end{proposition}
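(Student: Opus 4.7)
The plan is to compare the two $S_d$-equivariant maps by evaluating both on an arbitrary pure tensor $T \in N^\alpha$ and showing they produce the same sum of pure tensors in $N^\beta$. A pure tensor in $N^\alpha$ is specified by a distinct labelling $c : \{\alpha_1, \ldots, \alpha_{l(\alpha)}\} \to \{1, \ldots, d\}$ (assigning label $c_j$ to part $\alpha_j$), and similarly pure tensors in $N^\beta$ correspond to distinct labellings $c'_i$ on the parts of $\beta$.

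To compute $F_n(x_{D(q)})(T)$, I will invoke Proposition \ref{x_D_action} directly. This produces the sum of all pure tensors $T'$ with labelling $c'$ such that, for each pair of vertices of $D(q)$, indices agree if and only if the vertices lie in the same component. The internal components on the top and bottom rows are exactly the parts of $\alpha$ and $\beta$, so these conditions are automatic from the fact that $T \in N^\alpha$ and $T' \in N^\beta$. The remaining cross conditions reduce to: $c_j = c'_i$ if $\alpha_j$ is joined to $\beta_i$ by a cross edge of $D(q)$ (a matched pair in $q$), and $c_j \neq c'_i$ otherwise.

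To compute $\xi_q(T)$, I will use Proposition \ref{schur_basis_theorem} combined with the identification $N^\alpha \cong M^{(d-l(\alpha),1^{l(\alpha)})}$: cosets of $S_{(d-l(\alpha),1^{l(\alpha)})}$ correspond exactly to choosing which $l(\alpha)$ labels in $\{1, \ldots, d\}$ are assigned (in order) to the singleton parts, the remaining $d - l(\alpha)$ labels forming the ``big block''. The sum over $S_\beta$-cosets in $gS_\alpha q S_\beta$ then translates into a sum over valid $c'$ by reading off the matrix of $q$: since every row except the first and every column except the first contains a single $1$, each singleton of $\beta$ (respectively of $\alpha$) is either paired with a singleton on the opposite side, forcing $c'_i = c_j$, or absorbed into the opposite big block, forcing $c'_i \notin \{c_1, \ldots, c_{l(\alpha)}\}$ (respectively $c_j \notin \{c'_1, \ldots, c'_{l(\beta)}\}$).

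The final step is to check that the two constraint systems cut out the same set of labellings $c'$, each arising with multiplicity one. The ``matched'' clauses agree by the very definition of $D(q)$. The ``big block'' clauses from $\xi_q$ correspond to the negated equalities ``$c'_i \neq c_j$ for unmatched pairs'' from $F_n(x_{D(q)})$; verifying this uses distinctness of the $c_j$'s so that matched-pair conditions do not interfere with unmatched-pair conditions, and the first-column constraints on $q$ (unmatched parts of $\alpha$) follow from the first-row constraints together with distinctness of the $c'_i$'s. I expect the main obstacle to be purely notational: keeping track of the role of the first row and first column of $q$, which encode ``big block'' information that has no direct counterpart in the partition-diagram language and must be unpacked via the coset-labelling bijection.
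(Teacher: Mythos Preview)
Your proposal is correct and follows essentially the same approach as the paper: both arguments parametrise pure tensors in $N^\alpha$ and $N^\beta$ by injective labellings of the parts, compute $F_n(x_{D(q)})$ via the ``if and only if'' characterisation of Proposition~\ref{x_D_action}, compute $\xi_q$ via the coset description of Proposition~\ref{schur_basis_theorem}, and then compare the resulting constraint systems on the target labelling. Your treatment of the unmatched parts (the first row and column of $q$) is in fact somewhat more explicit than the paper's, which simply notes that the unmatched $\sigma(J(\beta_j))$ may take any distinct values outside the $I(\alpha_i)$.
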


\begin{proof}
To understand how $\xi_q$ acts on $N^\alpha \subseteq V^{\otimes r}$, we recall that $N^\alpha$ has a basis consisting of pure tensors where the $i$-th and $j$-th tensor factors are the same basis vector of $V$ if and only if $i$ and $j$ are in the same part of $\alpha$. We may therefore associate a basis vector $v_{I(\alpha_i)}$ to each part $\alpha_i$ of the set partition $\alpha$. We write the pure tensor as
\[
\bigotimes_{i=1}^{l(\alpha)} v_{I(\alpha_i)}^{\otimes \alpha_i}
\]
where a vector $v$ raised to a tensor power that is a part of a set partition indicates that the tensor factors corresponding to the elements in that part equal to the $v$. (For example $v^{\otimes \{1,3\}} \otimes w^{\otimes \{2\}} = v \otimes w \otimes v$.) A similar description holds for $N^\beta$:
\[
\bigotimes_{j=1}^{l(\beta)} v_{J(\beta_j)}^{\otimes \beta_j}.
\]
Here the symmetric group action is by permuting the indices of the tensor factors (rather than permuting the tensor factors themselves as in the setting of Schur-Weyl duality).
\newline \newline \noindent
Recall that the action of $\xi_q$ can be understood by taking the sum of the actions of all $S_{(d-l(\beta),1^{l(\beta)})}$ coset representatives in the double coset $S_{(d-l(\alpha), 1^{l(\alpha)})}q S_{(d-l(\beta),1^{l(\beta)})}$. An element $\sigma$ of the double coset $S_{(d-l(\alpha), 1^{l(\alpha)})}q S_{(d-l(\beta),1^{l(\beta)})}$ acts on our pure tensor to produce a pure tensor as follows:
\[
\sigma \left( \bigotimes_{i=1}^{l(\alpha)} v_{I(\alpha_i)}^{\otimes \alpha_i} \right)
=
\bigotimes_{j=1}^{l(\beta)} v_{\sigma(J(\beta_j))}^{\otimes \beta_j}.
\]
Here $\sigma(J(\beta_j)) = I(\alpha_i)$ if and only if $q_{i+1,j+1}=1$ (because this means the double coset matches $\alpha_i$ with $\beta_j$), but otherwise $\sigma$ can be any permutation. This means that for indices $j$ such that $q_{ij}=0$ for all $i \neq 1$ (meaning that the $1$ in column $j$ appears in the first row), $\sigma(J(\beta_j))$ can take any value different from the $I(\alpha_i)$. However, as long as these values are distinct for different $j$ (as $\sigma$ is a permutation), there are no restrictions. The action of $\xi_q$ is given by the sum of all such choices. However this is, by construction, precisely the action of $F_n(x_{D(q)})$ according to Proposition \ref{x_D_action}.
\end{proof}
\noindent
Suppose that $\alpha, \beta, \alpha^\prime, \beta^\prime$ are compositions of $d$. We may write
\[
M^\alpha \otimes M^\beta = \bigoplus_{\gamma \in S_\alpha \backslash S_d / S_\beta} M^{\gamma}
\]
and
\[
M^{\alpha^\prime} \otimes M^{\beta^\prime} = \bigoplus_{\gamma^\prime \in S_{\alpha^\prime} \backslash S_d / S_{\beta^\prime}} M^{\gamma^\prime}.
\]
Here we have used the convention that a double coset $\gamma$ (or $\gamma^\prime$) may be considered as a composition whose parts are the entries of the associated matrix. We may represent $\gamma$ by a $\mathbb{Z}_{\geq 0}$-valued matrix $\gamma_{ij}$ such that the $i$-th row sums to $\beta_i$ and the $j$-th column sums to $\alpha_j$. Similarly we may represent $\gamma^\prime$ by a matrix $\gamma_{kl}^\prime$ whose row sums are $\beta_k^\prime$ and whose column sums are the parts of $\alpha_l^\prime$. Suppose that $q^{(1)}$ is a $(S_\alpha, S_{\alpha^\prime})$-double coset, and $q^{(2)}$ is a $(S_\beta, S_{\beta^\prime})$-double coset. We indicate a $(S_\gamma,S_{\gamma^\prime})$-double coset by a $4$-tensor $T_{ijkl}$ where $\sum_{kl} T_{ijkl} = \gamma_{ij}^\prime$ and $\sum_{ij} T_{ijkl} = \gamma_{kl}$.
\begin{proposition} \label{tensor_rule}
The morphism 
\[
\xi_{q^{(1)}} \otimes \xi_{q^{(2)}} : M^\alpha \otimes M^\beta \to M^{\alpha^\prime} \otimes M^{\beta^\prime}
\]
restricts as a map $f: M^{(\gamma)} \to M^{(\gamma^\prime)}$ in the following way.
\[
\xi_{q^{(1)}} \otimes \xi_{q^{(2)}} \restriction_{M^\gamma \to M^{\gamma^\prime}}
= \sum_{T_{ijkl}}  \xi_{T},
\]
where the sum ranges over all $4$-tensors $T_{ijkl}$ subject to
\begin{eqnarray*}
\sum_{kl} T_{ijkl} &=& \gamma_{ij}^\prime\\
\sum_{ij} T_{ijkl} &=& \gamma_{kl}\\
\sum_{ik} T_{ijkl} &=& q_{jl}^{(2)}\\
\sum_{jl} T_{ijkl} &=& q_{ik}^{(1)}.
\end{eqnarray*}
\end{proposition}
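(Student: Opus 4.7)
The plan is to realize both $M^\alpha \otimes M^\beta$ and $M^{\alpha'} \otimes M^{\beta'}$ as spans of pairs of colorings of $\{1,\ldots,d\}$, and read off both sides of the asserted identity from the combinatorics of $4$-way joint distributions.

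First I would identify $M^\alpha$ with the free $R$-module on functions $f : \{1,\ldots,d\} \to \{1,\ldots,l(\alpha)\}$ with $|f^{-1}(k)| = \alpha_k$, where $S_d$ acts by precomposition. Then $M^\alpha \otimes M^\beta$ has basis the set of pairs $(f_1, f_2)$, and the decomposition $\bigoplus_\gamma M^\gamma$ simply partitions these pairs according to their joint distribution matrix $\gamma$. Unpacking Proposition \ref{schur_basis_theorem} in this language shows that $\xi_q : M^\alpha \to M^{\alpha'}$ sends a basis element $f_1$ to the sum of all $f_1'$ such that $(f_1, f_1')$ has joint distribution equal to $q$; equivalently, $\xi_q$ corresponds under the isomorphism $\Hom_{S_d}(M^\alpha, M^{\alpha'}) \cong (M^\alpha \otimes M^{\alpha'})^{S_d}$ to the characteristic function of the $S_d$-orbit of any such pair.

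Applying this coordinatewise gives
\[
(\xi_{q^{(1)}} \otimes \xi_{q^{(2)}})(f_1 \otimes f_2) = \sum_{(f_1', f_2')} f_1' \otimes f_2',
\]
where the sum runs over pairs with $(f_1, f_1')$-joint $q^{(1)}$ and $(f_2, f_2')$-joint $q^{(2)}$. I would then regroup this sum by the $4$-way joint distribution $T$ of the quadruple $(f_1, f_2, f_1', f_2')$. The four marginal conditions --- $\gamma$ on $(f_1, f_2)$, $\gamma'$ on $(f_1', f_2')$, $q^{(1)}$ on $(f_1, f_1')$, and $q^{(2)}$ on $(f_2, f_2')$ --- are precisely the four constraints listed in the proposition.

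Finally, for each admissible $T$, the inner sum of $f_1' \otimes f_2'$ over pairs contributing to that particular $T$ is exactly the action of $\xi_T : M^\gamma \to M^{\gamma'}$ on $f_1 \otimes f_2$, by the same coloring interpretation of Proposition \ref{schur_basis_theorem} applied to $M^\gamma$ and $M^{\gamma'}$ (which are themselves permutation modules on colorings valued in $\{1,\ldots,l(\alpha)\} \times \{1,\ldots,l(\beta)\}$ with prescribed marginals). Summing over admissible $T$ yields the claimed formula. The only real obstacle is bookkeeping of index conventions --- matching the four slots of $T$ to the correct pair of colorings as set up in the proposition --- but once the coloring model is fixed, both sides of the identity count the same quadruples and no further computation is needed.
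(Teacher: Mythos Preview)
Your proposal is correct and is essentially the same argument as the paper's own proof. The paper realizes $M^\gamma$ as the span of pure tensors $(v_{r_1}\otimes\cdots\otimes v_{r_d})\otimes(v_{s_1}\otimes\cdots\otimes v_{s_d})$ with prescribed pair-count matrix $\gamma$, which is exactly your coloring model with $f_1(m)=r_m$ and $f_2(m)=s_m$; it then groups the image by the number $T_{ijkl}$ of positions $m$ with $(r_m,s_m)\mapsto(k,l)$, which is your $4$-way joint distribution.
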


\begin{proof}
We describe $M^\gamma \subseteq M^\alpha \otimes M^\beta$ as the span of pure tensors of the form
\[
(v_{r_1} \otimes v_{r_2} \otimes \cdots \otimes v_{r_n}) \otimes (v_{s_1} \otimes v_{s_2} \otimes \cdots \otimes v_{s_n})
\]
where the number of pairs $(r_m,s_m)$ equal to $(i,j)$ is precisely $\gamma_{ij}$. Then, $\xi_{q^{(1)}}$ acts on the first $n$ tensor factors, while $\xi_{q^{(2)}}$ acts on the second $n$ tensor factors. If we have a pair of indices $(r_m,s_m)$ equal to $(i,j)$, they may be mapped to $(k,l)$ by $\xi_{q^{(1)}}$ mapping the relevant $v_i$ to $v_k$ and by $\xi_{q^{(2)}}$ mapping the relevant $v_j$ to $v_l$; if there are $T_{ijkl}$ values of $m$ for which this happens, we obtain $\xi_T$.

\end{proof}

\section{Categorical Interpolation and Specialisation}
\noindent
In this section we construct categories that interpolate the composition rule in Proposition \ref{composition_rule} and tensor product rule in Proposition \ref{tensor_rule}. In our setting, the parts of a partition $\lambda$ will be considered as variables. Hence, we work over the ring of integer-valued polynomials in the parts of $\lambda$, as per the following definition. As before, $\lambda_r$ is taken to be zero when $r > l(\lambda)$.

\begin{definition}
Let $\mathcal{R}$ be the ring of integer-valued polynomials in one variable (free as a $\mathbb{Z}$-module with binomial coefficients ${x \choose n}$ as a basis). Write $\mathcal{R}_l$ for the ring of integer-valued polynomials in the variables $\lambda_1, \lambda_2, \ldots, \lambda_l$. It is canonically isomorphic to $\mathcal{R}^{\otimes l}$. Given a composition of length $l$, we define $\phi_\mu : \mathcal{R}_l \to \mathbb{Z}$ to be the homomorphism which evaluates $\lambda$ at $\mu$.
\end{definition}

\noindent
To interpolate morphisms $\xi_q$ between $M^\alpha$ and $M^\beta$ (as modules for $S_d$), we will require that $\alpha$ and $\beta$ only deviate ``finitely'' from our parameter partition $\lambda$; to be precise, we assume $\alpha_i, \beta_i \in \lambda_i + \mathbb{Z}$ for all $i \leq l(\lambda)$, and $\alpha_i, \beta_i \in \mathbb{Z}_{\geq 0}$ otherwise. Note that this means that the parts of $\alpha$ and $\beta$ depend on the variables $\lambda_i$, hence such $\alpha$ and $\beta$ are not integer partitions.

\begin{definition}
Let $T_\lambda$ be the set of sequences of the form 
\[
(\lambda_1 + \sigma_1, \lambda_2 + \sigma_2, \ldots, \lambda_l + \sigma_l, \tau_1, \tau_2, \ldots) =
(\lambda + \sigma, \tau),
\]
where $\sigma \in \mathbb{Z}^l$ and $\tau$ is a composition such that $\sum_{i=1}^l \sigma_i + |\tau| = 0$. We treat elements of $T_\lambda$ as vectors so that it makes sense to add componentwise. Additionally, given a composition $\mu$ of length $l$, extend $\phi_\mu$ to elements of $T_\lambda$ componentwise (the result will be an integer vector with sum of coordinates $|\mu|$).
\end{definition}
\noindent
Although elements of $T_\lambda$ are not partitions, we can still make sense of $(S_\alpha, S_\beta)$-double cosets as in Proposition \ref{young_double_cosets}.
\begin{definition}
Suppose that ${\alpha}, {\beta} \in T_\lambda$. Let $T_\lambda({\alpha}, {\beta})$ be the set of matrices $A$ satisfying the following properties. The size of $A$ is $l({\beta}) \times l({\alpha})$. For $1 \leq i \leq l(\lambda)$, the $i$-th diagonal entry satisfies $A_{ii} \in \lambda_i + \mathbb{Z}$, while all other entries must be non-negative integers. We require the row sums to be ${\beta}$ and the column sums to be ${\alpha}$.
\end{definition}
\noindent
These elements index bases of morphism spaces in the category that we construct. We modify the composition rule in Proposition \ref{composition_rule} to our setting.

\begin{definition} \label{enhanced_composition_rule}
Let ${r} \in T({\beta}, {\gamma})$, ${s} \in T_\lambda({\alpha}, {\beta})$ and let ${q} \in T({\alpha}, {\gamma})$. We define the product of the symbols $\xi_{{r}}$ and $\xi_{{s}}$ to be the formal linear combination of symbols $\xi_{{q}}$
\[
\xi_{{r}}\xi_{{s}}
=
\sum_{{q} \in T_\lambda({\alpha}, {\gamma})}
C_{{r},{s}}^{{q}} \xi_{{q}}
\]
given as follows. Let $A = (a_{ijk})$ be a 3-tensor (of size $l({\gamma})\times l({\beta})\times l({\alpha})$) such that for $1 \leq i \leq l(\lambda)$, $a_{iii} \in \lambda_i + \mathbb{Z}$ and all other entries are in $\mathbb{Z}_{\geq 0}$. We require $A$ to satisfy
\begin{eqnarray*}
\sum_{k}a_{ijk} &=& {r}_{ij} \\
\sum_{j}a_{ijk} &=& {q}_{ik} \\
\sum_{i}a_{ijk} &=& {s}_{jk}
\end{eqnarray*}
and we let
\begin{equation*}
V(A) = \prod_{i,k} \frac{(\sum_j a_{ijk})!}{\prod_j a_{ijk}!}
\end{equation*}
Then, summing over such $A$,
\begin{equation*} 
C_{{r},{s}}^{{q}} = \sum_{A} V(A).
\end{equation*}
\end{definition}

\begin{proposition} \label{enhanced_properties}
We have the following properties of Definition \ref{enhanced_composition_rule}.
\begin{enumerate}
\item The sum $\sum_{{q}} C_{{r},{s}}^{{q}} \xi_{{q}}$ has finitely many terms.
\item The coefficients $C_{{r},{s}}^{{q}}$ are elements of $\mathcal{R}_l$.
\item Let $\phi_\mu: \mathcal{R}_l \to \mathbb{Z}$ be defined by evaluating $\lambda$ at some integer partition. Also let $\phi_\mu({r})$ be the matrix obtained from $({r}_{ij})$ by applying $\phi_\mu$ to its entries. Further let $\phi_\mu(\xi_{{r}})$ be $\xi_{\phi_\mu(r)}$ (an element of the Schur algebra) if $\phi_\mu(r)$ has non-negative entries, and let it be zero otherwise. Then $\phi_\mu$ intertwines the composition in Definition \ref{enhanced_composition_rule} and the composition in Proposition \ref{composition_rule}.
\item The composition rule in Definition \ref{enhanced_composition_rule} is associative.
\end{enumerate}
\end{proposition}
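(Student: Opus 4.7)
The plan is to handle the four claims in sequence, with (3) carrying the computational weight and (4) following from (3) by a Zariski-density argument.

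Claim (1) is a counting observation. For fixed $r\in T_\lambda(\beta,\gamma)$ and $s\in T_\lambda(\alpha,\beta)$, every off-diagonal entry $a_{ijk}$ (meaning $(i,j,k)\neq(p,p,p)$ for any $p\leq l(\lambda)$) of an admissible 3-tensor $A$ is a non-negative integer bounded by an off-diagonal entry of $r$ or $s$; the diagonal entries $a_{ppp}$ are then forced by the row/column constraints, e.g.\ $a_{ppp}=r_{pp}-\sum_{k\neq p}a_{ppk}$. Hence only finitely many $A$, and in turn only finitely many $q$, occur. Claim (2) is a direct rewriting: express $V(A)$ as a product $\prod_{i,k}\binom{q_{ik}}{a_{i1k},a_{i2k},\ldots}$ of multinomial coefficients. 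The only factors involving the variables $\lambda_p$ are those with $(i,k)=(p,p)$ for $p\leq l(\lambda)$, and such a factor splits as $\binom{\lambda_p+Q_p}{b_p}\cdot\binom{b_p}{(a_{pjp})_{j\neq p}}$, where $Q_p=q_{pp}-\lambda_p\in\mathbb{Z}$ and $b_p=\sum_{j\neq p}a_{pjp}\in\mathbb{Z}_{\geq 0}$. The first factor is the integer-valued polynomial $\binom{\lambda_p+Q_p}{b_p}\in\mathcal{R}$, the second is an integer, so $V(A)\in\mathcal{R}_l$.

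For (3), I would compare $\phi_\mu(\xi_r\xi_s)$ with $\phi_\mu(\xi_r)\phi_\mu(\xi_s)$ term-by-term against the basis $\{\xi_{\phi_\mu(q)}\}$. In the generic case where $\phi_\mu(r)$ and $\phi_\mu(s)$ both have non-negative entries, the 3-tensors $A$ in Definition \ref{enhanced_composition_rule} satisfying $\phi_\mu(a_{ppp})\geq 0$ are precisely the 3-tensors from Proposition \ref{composition_rule} for $\phi_\mu(r),\phi_\mu(s),\phi_\mu(q)$, and each factor $\binom{\lambda_p+Q_p}{b_p}$ specialises to the corresponding integer binomial. Tensors $A$ with some $\phi_\mu(a_{ppp})<0$ must contribute zero; this is the key cancellation. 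If $\phi_\mu(q_{pp})=\mu_p+Q_p\geq 0$ and $\phi_\mu(a_{ppp})<0$, then $b_p=\phi_\mu(q_{pp})-\phi_\mu(a_{ppp})>\mu_p+Q_p\geq 0$, so $\binom{\mu_p+Q_p}{b_p}=0$. The same device covers the degenerate case where $\phi_\mu(\xi_r)=0$ or $\phi_\mu(\xi_s)=0$: if $\phi_\mu(r_{pp})<0$ then the row-sum identity forces $\phi_\mu(a_{ppp})<0$, and the same binomial annihilates every $A$, giving $\phi_\mu(C_{r,s}^{q})=0$ as needed.

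For (4), once (3) is established, both $(\xi_r\xi_s)\xi_t$ and $\xi_r(\xi_s\xi_t)$ expand as finite sums (by (1)) of $\xi_q$'s with coefficients in $\mathcal{R}_l$ (by (2)). For $\mu\in\mathbb{Z}_{\geq 0}^l$ with every $\mu_i$ sufficiently large, $\phi_\mu$ sends each side into the Schur algebra, where associativity is known, and by (3) the two polynomial coefficients of each $\xi_q$ therefore agree at such $\mu$. This set is Zariski-dense in $\mathbb{Z}^l$, and since $\mathcal{R}_l$ sits inside $\mathbb{Q}[\lambda_1,\ldots,\lambda_l]$, the coefficients are equal as polynomials in $\mathcal{R}_l$. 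The step I expect to be the main obstacle is the cancellation argument in (3): the easy part is identifying $V(A)$ as a product of specialisable multinomials, but one must check carefully that the spurious contributions from 3-tensors whose diagonal entries specialise to negative integers are annihilated by exactly the right binomial factor.
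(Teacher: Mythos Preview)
Your proposal is correct and follows essentially the same approach as the paper: a direct finiteness count for (1), multinomial coefficients in $\mathcal{R}_l$ for (2), a vanishing argument for (3), and Zariski density for (4). The one place you are slightly more explicit than the paper is in (3): the paper only writes out the degenerate case where $\phi_\mu(r)$ or $\phi_\mu(s)$ has a negative entry, dismissing the generic case with ``the two composition rules are almost identical,'' whereas you also spell out why spurious 3-tensors $A$ with $\phi_\mu(a_{ppp})<0$ contribute zero in the generic case via the same binomial vanishing $\binom{\mu_p+Q_p}{b_p}=0$ --- this is a welcome bit of care, not a different argument.
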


\begin{proof}
\noindent
Consider the possible matrices $A = (a_{ijk})$ that arise. Because there are finitely many off-diagonal entries ($i,j,k$ not all equal), each of which is bounded either by ${r}_{ij}$ or ${s}_{jk}$ (at least one of which is a non-negative integer), there are finitely many possibilities for the off-diagonal entries. The diagonal entries are determined by the off-diagonal entries using the row-sum conditions. This means there are only finitely many such $A$ possible. This proves the sum is finite.
\newline \newline \noindent
Each $V(A)$ is a product of multinomial coefficients, where in each multinomial coefficient, at most one parameter is not a non-negative integer (i.e. lies in $\lambda_i + \mathbb{Z}$ for some $i$, which can only happen when the indices of $a_{ijk}$ are all equal). Such multinomial coefficients lie in $\mathcal{R}_l$. This guarantees that the $C_{{r},{s}}^{q}$ lie in $\mathcal{R}_l$.
\newline \newline \noindent
Because the two composition rules are almost identical, what must be shown is precisely that if one of $\phi_\mu({r})$ or $\phi_\mu({s})$ has a negative entry, then $\phi_\mu(C_{{r},{s}}^{{q}}) = 0$ for $q$ such that $\phi_\mu({q})$ has no negative entries. Suppose that $\varphi_\mu(r)$ has a negative entry (necessarily on the diagonal), $\phi_\mu({r}_{dd}) < 0$, and consider any matrix $A = (a_{ijk})$ arising in the calculation. Only diagonal entries of $\phi_\mu(A_{ijk})$ can be negative, because off-diagonal entries are non-negative integers, so the row-sum condition implies $\phi_\mu(a_{ddd}) < 0$. The numerator of $V(A)$ is the product of the factorials of the entries of ${q}$ (yielding some positive integer), while the denominator contains $a_{ddd}!$ which is (formally) infinite, so $V(A)=0$. The case where $s$ has a negative entry is identical.
\newline \newline \noindent
Fix ${r},{s},{q}$ and consider the quantity $(\xi_{{r}} \xi_{{s}}) \xi_{{q}} - \xi_{{r}} (\xi_{{s}} \xi_{{q}})$ as computed by Definition \ref{enhanced_composition_rule}. If this was not zero, it would contain a term $C\xi_{{p}}$ (where $C \in \mathcal{R}_l$). Consider $\phi_\mu$ specialising $\lambda$ to a partition $\mu$ with $\mu_1 \geq \mu_2 \geq \cdots \geq \mu_l \gg 0$ so that $\phi_\mu({p})$ has nonnegative entries, and hence $\xi_{\phi_\mu({p})}$ is not zero. But $(\xi_{\phi_\mu({r})} \xi_{\phi_\mu({s})}) \xi_{\phi_\mu({q})} - \xi_{\phi_\mu({r})} (\xi_{\phi_\mu({s})} \xi_{\phi_\mu({q})}) = 0$ because composition of morphisms of representations of $S_{|\mu|}$ is associative. This leads us to conclude that $\phi_\mu(C) = 0$. But the set of $\mu$ that we could choose is Zariski-dense, and we conclude that $C=0$.
\end{proof}

\noindent
We are now able to define the categories we are interested in.
\begin{definition}
Let $\mathcal{C}_\lambda^{(0)}$ be the $\mathcal{R}_l$-linear category whose objects $M^{{\alpha}}$ are indexed by ${\alpha} \in T_\lambda$. The morphisms between two objects are given by:
\[
\Hom_{\mathcal{C}_\lambda^{(0)}}(M^{{\alpha}}, M^{{\beta}}) = \mathcal{R}_l  T({\alpha}, {\beta})
\]
The composition of morphisms is $\mathcal{R}_l$-linear, given by Definition \ref{enhanced_composition_rule}. Now define $\mathcal{C}_\lambda$ to be the Karoubian envelope of $\mathcal{C}_\lambda^{(0)}$.
\end{definition}
\noindent
Due to Proposition \ref{enhanced_properties}, we know that composition is associative. However, we also obtain a specialisation functor to representations of symmetric groups.
\begin{definition}
Let $\mu \models n$ be a composition of length $l$ and $F_\mu^{(0)}$ be the functor from $\mathcal{C}^{(0)}_\lambda$ to the category of representations of $S_n$ defined by sending $M^{{\alpha}}$ to the permutation module $M^{\phi_\mu({\alpha})}$ if $\phi_\mu({\alpha})$ is a composition (i.e. has no negative entries), and zero otherwise. On morphisms, $F_\mu^{(0)}(C \xi_{{q}} ) = \phi_\mu(C) \xi_{\phi_\mu({q})}$. Let $F_\mu$ be the induced functor on Karoubian envelopes.
\end{definition}
\noindent
Part $(3)$ of Proposition \ref{enhanced_properties} shows that $F_\mu^{(0)}$ respects composition of morphisms, and hence defines a functor.



\begin{example}\label{chevalley_reln}
Take $l=2$, so that $\lambda = (\lambda_1, \lambda_2)$. Let ${e}$ be defined by the matrix
\[
\left( \begin{array}{cc}
\lambda_1 & 1 \\
0 & \lambda_2-1
\end{array} \right),
\]
and let ${f}$ be defined by
\[
\left( \begin{array}{cc}
\lambda_1 & 0 \\
1 & \lambda_2 - 1
\end{array} \right).
\]
Then to calculate $\xi_{{f}} \xi_{{e}}$ according to Definition \ref{enhanced_composition_rule}, we consider $2\times 2 \times 2$ tensors $A = (a_{ijk})$ satisfying certain sum conditions. We express these in tables, labelling each row or column by the relevant row-sum or column-sum constraint on $a_{i1k}$ and $a_{i2k}$.
\begin{center}
\begin{tabular}{ cc|c| } 
 \multicolumn{1}{l|}{$a_{i1k}$} & $\lambda_1$ & $1$ \\  \hline
 \multicolumn{1}{l|}{$\lambda_1$} & $a_{111}$ & $a_{112}$ \\  \hline
 \multicolumn{1}{l|}{$1$} & $a_{211}$ & $a_{212}$ \\ 
 \hline
\end{tabular} \hspace{10mm}
\begin{tabular}{ cc|c| } 
\multicolumn{1}{l|}{$a_{i2k}$} & $0$ & $\lambda_2-1$ \\  \hline
 \multicolumn{1}{l|}{$0$} & $a_{121}$ & $a_{122}$ \\  \hline
 \multicolumn{1}{l|}{$\lambda_2-1$} & $a_{221}$ & $a_{222}$ \\ 
 \hline
\end{tabular}
\end{center}
One easily sees that the only nonzero entry of $a_{i2k}$ is $a_{222}=\lambda_2 - 1$, but there are two possibilities for $a_{i1k}$. They are:
\[
\left( \begin{array}{cc}
\lambda_1 & 0 \\
0 &  1
\end{array} \right),\hspace{10mm}
\left( \begin{array}{cc}
\lambda_1-1 & 1 \\
1 & 0
\end{array} \right).
\]
Combining the two layers (according to the rule), we obtain
\[
{r} = \left( \begin{array}{cc}
\lambda_1 & 0 \\
0 &  \lambda_2
\end{array} \right), \hspace{10mm} {s} =
\left( \begin{array}{cc}
\lambda_1-1 & 1 \\
1 & \lambda_2-1
\end{array} \right).
\]
where the former term has a scalar multiple of $\frac{\lambda_2 !}{(\lambda_2-1)! 1!} = \lambda_2$ associated to it. Thus,
\[
\xi_{{f}} \xi_{{e}} = \lambda_2 \xi_{{r}} + \xi_{{s}}.
\]
Now instead consider ${e}^\prime$ be defined by the matrix
\[
\left( \begin{array}{cc}
\lambda_1-1 & 1 \\
0 & \lambda_2
\end{array} \right),
\]
and let ${f}^\prime$ be defined by
\[
\left( \begin{array}{cc}
\lambda_1-1 & 0 \\
1 & \lambda_2
\end{array} \right).
\]
Then to calculate $\xi_{{e}^\prime} \xi_{{f}^\prime}$, we perform the same steps. Our $a_{ijk}$-tables are
\begin{center}
\begin{tabular}{ cc|c| } 
 \multicolumn{1}{l|}{$a_{i1k}$} & $\lambda_1-1$ & $0$ \\  \hline
 \multicolumn{1}{l|}{$\lambda_1-1$} & $a_{111}$ & $a_{112}$ \\  \hline
 \multicolumn{1}{l|}{$0$} & $a_{211}$ & $a_{212}$ \\ 
 \hline
\end{tabular} \hspace{10mm}
\begin{tabular}{ cc|c| } 
 \multicolumn{1}{l|}{$a_{i2k}$} & $1$ & $\lambda_2$ \\  \hline
 \multicolumn{1}{l|}{$1$} & $a_{121}$ & $a_{122}$ \\  \hline
 \multicolumn{1}{l|}{$\lambda_2$} & $a_{221}$ & $a_{222}$ \\ 
 \hline
\end{tabular}
\end{center}
Similarly to before, $a_{i1k}$ must be zero except for $a_{111} = \lambda_1-1$, while $a_{i2k}$ may be either of:
\[
\left( \begin{array}{cc}
1 & 0 \\
0 & \lambda_2
\end{array} \right), \hspace{10mm}
\left( \begin{array}{cc}
0 & 1 \\
1 & \lambda_2-1
\end{array} \right).
\]
This leads to
\[
\xi_{{e}^\prime} \xi_{{f}^\prime} = \lambda_1 \xi_{{r}} + \xi_{{s}}.
\]
Finally, we note that
\[
\xi_{{e}^\prime} \xi_{{f}^\prime} - \xi_{{f}} \xi_{{e}} = (\lambda_1 - \lambda_2) \xi_{{r}},
\]
and that $\xi_{{r}}$ is the identity in $\End(M^{(\lambda_1, \lambda_2)})$. This will turn out to be related to the identity $[e,f]=h$ in $\mathfrak{sl}_2$ (see Theorem \ref{gl_schur_equivalence}).
\end{example}

\begin{example}
Consider the case $l(\lambda)=1$. We obtain precisely Harman's integral permutation Deligne category $\underline{{\rm Perm}}_{\lambda_1}$ (as in Chapter 4 of \cite{NateThesis}). Harman used certain truncations of this category to prove stability properties for modular representations of symmetric groups.
\end{example}

\begin{proposition} \label{enhanced_tensor_rule}
In the case $l(\lambda)=1$, suppose that we define a tensor product on $\mathcal{C}_{\lambda}$ as follows. On objects,
\[
M^{{\alpha}} \otimes M^{{\beta}} = 
\bigoplus_{{\gamma} \in T_\lambda({\alpha}, {\beta})} M^{{\gamma}},
\]
where $M^{{\gamma}}$ is understood to correspond to the vector whose entries are the entries of ${\gamma}$. The tensor product of morphisms is given by generalising Proposition \ref{tensor_rule} in the same way that Definition \ref{enhanced_composition_rule} generalised Proposition \ref{composition_rule} (i.e. allowing ``diagonal'' entries of $T$ to depend on $\lambda$, and requiring all other entries to be nonnegative integers subject to row and column sum constraints). Then,
\begin{enumerate}
\item The tensor product $\xi_{{r}} \otimes \xi_{{s}}$ is given by a finite sum of $\xi_{{q}}$ with coefficients in $\mathcal{R}_l$.
\item The specialisation functor $F_n : \mathcal{C}_\lambda \to S_n\mbox{-mod}$ is a tensor functor (i.e. $F_n(\xi_{{r}} \otimes \xi_{{s}}) = F_n(\xi_{{r}}) \otimes F_n(\xi_{{s}})$).
\item The tensor product is associative.
\end{enumerate}
\end{proposition}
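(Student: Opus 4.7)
The plan is to mimic the proof of Proposition \ref{enhanced_properties}, replacing the 3-tensor $A=(a_{ijk})$ that indexes compositions with the 4-tensor $T=(T_{ijkl})$ that indexes tensor products. Since $l(\lambda)=1$, only the single entry $T_{1111}$ is allowed to lie in $\lambda_1+\mathbb{Z}$; every other entry is a non-negative integer, and similarly only the $(1,1)$ positions of $q^{(1)}, q^{(2)}, \gamma, \gamma'$ involve $\lambda_1$.

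For (1), observe that any $T_{ijkl}$ with $(i,k)\neq(1,1)$ is bounded by $q^{(1)}_{ik}$ and any $T_{ijkl}$ with $(j,l)\neq(1,1)$ is bounded by $q^{(2)}_{jl}$, via the constraints $\sum_{jl} T_{ijkl} = q^{(1)}_{ik}$ and $\sum_{ik} T_{ijkl} = q^{(2)}_{jl}$; in both cases the bound is an ordinary non-negative integer. Hence every off-diagonal entry of $T$ has finitely many admissible values, and $T_{1111}$ is then forced by any one of the four sum constraints. The coefficient attached to each $\xi_T$ in the generalised tensor rule is $1$ (there are no multinomial factors in the proof of Proposition \ref{tensor_rule}), so the claim that coefficients lie in $\mathcal{R}_1$ is automatic. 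For (2), if $\phi_\mu(q^{(1)})$ has a negative entry it must be $\phi_\mu(q^{(1)}_{11})<0$; the constraint $\sum_{jl} T_{1j1l} = q^{(1)}_{11}$, combined with non-negativity of $T_{1j1l}$ for $(j,l)\neq(1,1)$, forces $\phi_\mu(T_{1111})<0$, so every term $\xi_{\phi_\mu(T)}$ on the right-hand side vanishes by the sign convention in the definition of $F_\mu$. The same argument handles a negative entry of $q^{(2)}$, $\gamma$, or $\gamma'$, and otherwise all specialised entries are non-negative integers, so the formula reduces term-by-term to Proposition \ref{tensor_rule}.

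For (3), the Zariski-density argument from Proposition \ref{enhanced_properties}(4) applies essentially verbatim. Fix $\xi_r, \xi_s, \xi_q$ and write the associator difference as a finite $\mathcal{R}_1$-linear combination $\sum_p C_p \xi_p$. For each $p$ that appears, specialise $\lambda$ to a composition $\mu=(\mu_1)$ with $\mu_1$ large enough that $\phi_\mu(p)$ has all entries non-negative and $\xi_{\phi_\mu(p)}$ is a nonzero morphism in $S_{\mu_1}$-mod; part (2) then shows that $\phi_\mu$ of the associator vanishes, because tensoring over $\mathbb{Q}S_{\mu_1}$ is associative. Hence $\phi_\mu(C_p)=0$ for all large $\mu_1$, and $C_p=0$ by Zariski density. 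I expect the main obstacle to be the bookkeeping in parts (1)--(2): the 4-tensor carries more indices than the 3-tensor used in the composition proof, and one must carefully identify which entries are integer-bounded and which can carry $\lambda_1$-dependence; once this is laid out, the remainder of the argument is a direct translation of the composition case.
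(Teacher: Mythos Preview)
Your proposal is correct and follows the same approach as the paper, which simply declares parts (2) and (3) ``completely analogous to the proof of Proposition \ref{enhanced_properties}'' and then isolates the one genuinely new point: the assumption $l(\lambda)=1$ is what makes $T_\lambda(\alpha,\beta)$ finite, so that the object-level tensor product is a finite direct sum. You give more detail on the morphism side (the 4-tensor bookkeeping) and less on the object side; the paper does the reverse, but the bounding argument is identical in both cases, so there is no substantive difference.
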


\begin{proof}
The proof of parts $(2)$ and $(3)$ of this theorem are completely analogous to the proof of Proposition \ref{enhanced_properties}. However, we emphasise that the assumption $l(\lambda) = 1$ is important, otherwise $T_\lambda({\alpha}, {\beta})$ is infinite, and the tensor product of two objects is not a finite sum of objects. In this case, however, it is finite; an element ${q}$ of $T({\alpha}, {\beta})$ has entries bounded by row sums or column sums except for the intersection of the first row and first column. The single remaining entry is determined by the fact that all entries must sum to $\lambda_1$.
\end{proof}
\begin{remark}
In the setting of the Deligne category, the monoidal structure on $\underline{{\rm Rep}}(S_t)$ was already defined on the pre-Deligne category $\underline{{\rm Rep}}_0(S_t)$. However, for $\mathcal{C}_\lambda$ we must pass at least to the additive envelope of $\mathcal{C}_\lambda^{(0)}$, because the tensor product of two $M^{{\alpha}}$ is typically a sum of many such terms.
\end{remark}

\begin{remark}
By way of comparison, the elements of $T_{(\lambda_1, \lambda_2)}((\lambda_1,\lambda_2),(\lambda_1,\lambda_2))$ are indexed by $r \in \mathbb{Z}_{\geq 0}$ (so there are infinitely many). In matrix form, they are:
\[
\left( {\begin{array}{cc}
   \lambda_1 - r & r \\
   r & \lambda_2 - r
  \end{array} } \right).
\]
\end{remark}
\noindent
To relate our categories to representations of symmetric groups, we interpolate Lemma \ref{eigenvalue_lemma}. 

\begin{proposition} \label{interpolated_eigenvalue}
Consider the generating function in Lemma \ref{eigenvalue_lemma} (we replace the partition $\lambda$ with $\alpha \in T_\lambda$), and expand the determinant factors as power series ``about the diagonal'', e.g. the second term would be
\[
\det \left( {\begin{array}{cc}
   x_{11} & x_{12} \\
   x_{21} & x_{22}
  \end{array} } \right)^{\alpha_2 - \alpha_3}
  =
  (x_{11}x_{22})^{\alpha_2 - \alpha_3} \sum_{m \geq 0} { \alpha_2 - \alpha_3 \choose m} \left(-\frac{x_{12}x_{21}}{x_{11}x_{22}}\right)^{m}.
\]
Let the coefficient of $x_{ij}^{q_{ij}}$ be $c_q$. Then, $c_q \in \mathcal{R}_l$, and moreover, $\xi_q \mapsto c_q$ makes $\mathcal{R}_l$ into a module over $\End(M^{\alpha})$.
\end{proposition}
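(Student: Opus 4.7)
The plan is to expand the generating function formally, verify that each coefficient $c_q$ lies in $\mathcal{R}_l$, and then deduce the module axiom $c_r c_s = \sum_q C_{r,s}^q c_q$ by specialising $\lambda$ to partitions and invoking Zariski density, mirroring the associativity argument in Proposition \ref{enhanced_properties}(4).

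First I would write each factor in the form
\[
\det\bigl((x_{ij})_{i,j=1}^r\bigr)^{\alpha_r - \alpha_{r+1}} = \Bigl(\prod_{i=1}^r x_{ii}\Bigr)^{\alpha_r - \alpha_{r+1}} (1 + P_r)^{\alpha_r - \alpha_{r+1}},
\]
where $P_r$ is a polynomial in the ratios $x_{ij}/x_{ii}$ ($i \neq j$) with integer coefficients coming from the non-identity terms of the determinant. Expanding $(1+P_r)^{\alpha_r - \alpha_{r+1}}$ via the generalised binomial theorem produces coefficients $\binom{\alpha_r - \alpha_{r+1}}{m}$; since $\alpha_r - \alpha_{r+1}$ is a $\mathbb{Z}$-linear combination of the $\lambda_i$ shifted by an integer, each such binomial coefficient is an integer-valued polynomial in $\lambda_1, \ldots, \lambda_l$, and hence lies in $\mathcal{R}_l$. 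For fixed $q \in T_\lambda(\alpha,\alpha)$, the off-diagonal entries $q_{ij}$ ($i \neq j$) bound the total degree contributed by the various $P_r^{m_r}$ factors, so only finitely many terms contribute to $c_q$; the diagonal exponents automatically match, since each $x_{ii}$ collects total exponent $\sum_{r \geq i}(\alpha_r - \alpha_{r+1}) = \alpha_i$ from the diagonal factors, adjusted by the integer shifts coming from the off-diagonal contributions. This shows $c_q \in \mathcal{R}_l$.

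For the module axiom, both $c_r c_s$ and $\sum_q C_{r,s}^q c_q$ are finite sums of elements of $\mathcal{R}_l$ (the right side is finite by Proposition \ref{enhanced_properties}(1)). I would verify the identity after specialisation: choose a partition $\mu$ with $\mu_1 \gg \mu_2 \gg \cdots \gg \mu_l \gg 0$ so that $\phi_\mu(\alpha)$ is an honest partition with strictly decreasing positive parts and $\phi_\mu(r), \phi_\mu(s)$ have non-negative entries. Then each exponent $\phi_\mu(\alpha_r - \alpha_{r+1})$ is a non-negative integer, the binomial series truncate to polynomial expansions, and the specialised generating function is exactly the one appearing in Lemma \ref{eigenvalue_lemma} for the partition $\phi_\mu(\alpha)$. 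Hence $\phi_\mu(c_q)$ is the scalar eigenvalue of $\xi_{\phi_\mu(q)}$ on the one-dimensional space $\Hom(S^{\phi_\mu(\alpha)}, M^{\phi_\mu(\alpha)})$, and by Proposition \ref{enhanced_properties}(3) both sides of the module axiom specialise to the eigenvalue of $\xi_{\phi_\mu(r)}\xi_{\phi_\mu(s)}$ on this line, so they agree at $\mu$. Since the set of such $\mu$ is Zariski-dense, the identity holds in $\mathcal{R}_l$. The main difficulty I anticipate is the careful bookkeeping required to justify the formal expansion---confirming both finiteness of the sum defining each $c_q$ and the fact that the result is genuinely in $\mathcal{R}_l$; once this is settled, the module axiom follows almost immediately from the density argument.
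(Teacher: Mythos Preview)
Your proposal is correct and follows essentially the same approach as the paper's proof: the coefficients $c_q$ lie in $\mathcal{R}_l$ because they are finite sums of products of binomial coefficients $\binom{\alpha_r-\alpha_{r+1}}{m}$ with $\alpha_r-\alpha_{r+1}\in\lambda_r-\lambda_{r+1}+\mathbb{Z}$, and the module identity is then verified by the Zariski-dense specialisation argument already used in Proposition~\ref{enhanced_properties}. The paper's proof is terser (three sentences), but your more explicit bookkeeping of the finiteness and the diagonal exponents is exactly the content behind its brief assertions.
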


\begin{proof}
Note that it makes sense to ask for the coefficient of $x_{ij}^{q_{ij}}$ because in any term of the sum, the exponent of $x_{ij}$ is in $\mathbb{Z}_{\geq 0}$ if $i \neq j$, and $\lambda_i + \mathbb{Z}$ if $i=j$. The $c_q$ are in $\mathcal{R}_l$, because they are sums of products of binomial coefficients of $(\alpha_i - \alpha_{i+1}) \in \lambda_i-\lambda_{i+1} + \mathbb{Z}$. The module statement follows from our usual Zariski-dense specialisation argument (as in Proposition \ref{enhanced_properties}).
\end{proof}

\begin{definition}
Let $HS^{\alpha}$ be the left module over $\End(M^\alpha)$ defined in Proposition \ref{interpolated_eigenvalue}, and let ${}^\alpha HS$ be the analogously defined right module.
\end{definition}

\begin{example} \label{two_row_module}
Consider the action of $\xi_q$ on $HS^{(\lambda_1,\lambda_2)}$, where
\[
q = \left( {\begin{array}{cc}
   \lambda_1 - m & m \\
   m & \lambda_2 - m
  \end{array} } \right).
\]
The construction in Proposition \ref{interpolated_eigenvalue} immediately gives that $\xi_q$ acts by multiplication by $(-1)^m {\lambda_2 \choose m}$.
\end{example}

\begin{theorem}
In the case $l(\lambda)=1$, $\mathcal{C}_{\lambda}$ is a symmetric monoidal category, where the tensor product is given by Proposition \ref{enhanced_tensor_rule}. Moreover, the specialisation functor $F_d : \mathcal{C}_\lambda \to S_d$-mod is a tensor functor.
\end{theorem}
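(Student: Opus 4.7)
The plan is to explicitly define the remaining pieces of the symmetric monoidal structure on $\mathcal{C}_\lambda$ (the unit object and the unit, associativity, and symmetry constraints) and then to verify the coherence axioms by the same Zariski-dense specialisation strategy used in part (4) of Proposition \ref{enhanced_properties} and in the proof of Proposition \ref{enhanced_tensor_rule}. First I would take the unit object to be $\mathbf{1} = M^{(\lambda_1)}$: for any ${\alpha} \in T_\lambda$, the set $T_\lambda(({\lambda_1}), {\alpha})$ contains exactly one $1 \times l({\alpha})$ matrix (its entries are forced to be the parts of ${\alpha}$), so $\mathbf{1} \otimes M^{\alpha}$ consists of the single summand $M^{\alpha}$, yielding an obvious unitor (and similarly on the right). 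The associator comes from the generalisation of Remark \ref{associator_remark}: both $(M^{\alpha} \otimes M^{\beta}) \otimes M^{\gamma}$ and $M^{\alpha} \otimes (M^{\beta} \otimes M^{\gamma})$ are naturally indexed by the same set of $3$-tensors $\theta = (\theta_{ijk})$ satisfying the appropriate marginal conditions (with the entry $\theta_{111}$ allowed to lie in $\lambda_1 + \mathbb{Z}$ and all other entries in $\mathbb{Z}_{\geq 0}$), and the associator is the identity on each matching summand. The symmetric braiding $c_{{\alpha},{\beta}} : M^{\alpha} \otimes M^{\beta} \to M^{\beta} \otimes M^{\alpha}$ would be defined via matrix transposition (compare Remark \ref{reflection_remark}): the summand $M^{\gamma}$ of $M^{\alpha} \otimes M^{\beta}$ is sent identically to the summand $M^{\gamma^T}$ of $M^{\beta} \otimes M^{\alpha}$.

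With these structure morphisms in hand, each coherence axiom (pentagon, triangle, the two hexagons, together with naturality of all the constraints) becomes an identity between $\mathcal{R}_1$-linear combinations of basis morphisms $\xi_T$ in $\mathcal{C}_\lambda$. To verify any such identity, I would specialise $\lambda_1$ to a sufficiently large positive integer $n$: every object involved then becomes a genuine permutation module for $S_n$, and by Proposition \ref{enhanced_tensor_rule}(2) together with the explicit descriptions of the unit, associator, and braiding above, every structure morphism specialises to its standard counterpart in $S_n$-mod. The axiom thus reduces to a known coherence relation in the symmetric monoidal category $S_n$-mod. Since the valid specialisations form a Zariski-dense subset of $\mathcal{R}_1$, the identity is forced to hold in $\mathcal{C}_\lambda$ itself. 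The statement that $F_d$ is a tensor functor then drops out: functoriality on morphisms is exactly Proposition \ref{enhanced_tensor_rule}(2), while the unitor, associator, and braiding in $\mathcal{C}_\lambda$ were defined so as to specialise, by construction, to those of $S_n$-mod.

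The main obstacle I expect is bookkeeping rather than anything conceptual: one must take care to write the associator and braiding as explicit $\mathcal{R}_1$-linear combinations of $\xi_T$'s (rather than mere reindexings of summands) so that the operation of specialising $\phi_\mu$ genuinely commutes with applying the constraint, which is essential for comparing the $\phi_\mu$-image of an abstract $\mathcal{R}_1$-valued coherence morphism against the standard structure morphism of $S_n$-mod. Once this careful description is set up, the Zariski-density argument is a direct adaptation of what has already been done for associativity of composition and of the tensor product, and no further essential difficulty remains.
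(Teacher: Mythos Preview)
Your proposal is correct and follows essentially the same approach as the paper: the unit is $M^{(\lambda_1)}$, the associator comes from the 3-tensor description of Remark \ref{associator_remark}, and the braiding is given by transposition of double-coset matrices. The paper's proof is in fact briefer than yours---it simply asserts that the coherence checks are ``readily checked'' and omits them---whereas you spell out that they follow from the Zariski-dense specialisation argument, which is exactly the verification method implicit in the rest of the section.
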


\begin{proof}
Note that $M^{(\lambda_1)}$ is the identity object for the tensor product. The only part of this theorem that is not immediately implied by Proposition \ref{enhanced_tensor_rule} is the symmetry (note that we have an associator that is essentially the same as in Remark \ref{associator_remark}). The symmetric operation is given by ``taking the transpose'' of double cosets. Recall that $M^\alpha \otimes M^\beta$ is a direct sum of certain $M^\gamma$; we have
\begin{eqnarray*}
c: M^{{\alpha}} \otimes M^{{\beta}} &\to& M^{{\beta}} \otimes M^{{\alpha}} \\
c(M^{{\gamma}}) &=& M^{{\gamma}^T},
\end{eqnarray*}
as per Remark \ref{reflection_remark}. This is readily checked to respect associativity, although we omit the details.
\end{proof}

\begin{remark}
If we had not assumed that $l(\lambda)=1$, then the tensor product of two objects would be an infinite sum of objects.
\end{remark}

\begin{theorem} \label{Deligne_equivalence}
If $l(\lambda)=1$ and we work over a field of characteristic zero, $\mathcal{C}_{\lambda_1}$ is equivalent to $\underline{{\rm Rep}}(S_{\lambda_1})$ as a symmetric tensor category.
\end{theorem}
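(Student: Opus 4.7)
The plan is to combine two ingredients: first, that in the case $l(\lambda)=1$ the category $\mathcal{C}_{\lambda_1}$ coincides with Harman's permutation Deligne category $\underline{{\rm Perm}}_{\lambda_1}$, as remarked in the example immediately preceding; and second, that over a field of characteristic zero there is a tensor equivalence $\underline{{\rm Perm}}_{\lambda_1} \cong \underline{{\rm Rep}}(S_{\lambda_1})$ following Comes--Ostrik (as promised in the introduction). The theorem is then the composition of these two identifications.

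For a direct argument, I would construct an explicit monoidal functor $G\colon \underline{{\rm Rep}}_0(S_{\lambda_1}) \to \mathcal{C}^{(0)}_{\lambda_1}$ at the pre-Deligne level and then pass to Karoubian envelopes. Set $V := M^{(\lambda_1-1,1)}$ in $\mathcal{C}_{\lambda_1}$, playing the role of the interpolated permutation representation, and define $G([m]) := V^{\otimes m}$. The interpolated analogue of Proposition \ref{tensor_pow_dec} decomposes $V^{\otimes m}$ as a direct sum of objects $N^\alpha$ indexed by set partitions of $\{1,\ldots,m\}$, with no restriction on the number of parts since $\lambda_1$ is a formal variable. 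Definition \ref{D_definition}, together with the subsequent propositions identifying $F_d(x_D)$ with $\xi_q$, tells us exactly how a partition diagram $D$ acts between the $N^\alpha$; this gives a definition of $G$ on morphisms, and monoidality is built in from the way $\otimes$ is defined by concatenation of diagrams on both sides.

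To verify that $G$ is fully faithful I would match bases. On the Deligne side, $\Hom_{\underline{{\rm Rep}}_0(S_{\lambda_1})}([m],[n])$ has the basis $\{x_D\}$ indexed by $D \in {\rm Par}_{m,n}$. On the $\mathcal{C}_{\lambda_1}$ side, $\Hom_{\mathcal{C}^{(0)}_{\lambda_1}}(V^{\otimes m}, V^{\otimes n}) = \bigoplus_{\alpha,\beta}\Hom(N^\alpha, N^\beta)$ has a basis $\{\xi_q\}$ indexed by interpolated double cosets, and the assignment $D \mapsto (\alpha_D, \beta_D, q_D)$ from Definition \ref{D_definition} supplies a bijection matching these two bases. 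Essential surjectivity onto the Karoubian envelope is then automatic, since every $M^{\alpha}$ with $\alpha \in T_{\lambda_1}$ splits off some $V^{\otimes m}$ via a suitable idempotent obtained by projecting onto the appropriate $N^\alpha$ summand. Symmetry of $G$ as a symmetric tensor functor uses Remark \ref{reflection_remark}: the braiding on both sides is realised by transposing double cosets, respectively by swapping top and bottom rows of a partition diagram.

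The main obstacle is verifying that the bijection $D \leftrightarrow q_D$ intertwines composition on the two sides: gluing partition diagrams in $\underline{{\rm Rep}}_0(S_{\lambda_1})$ (with each deleted loop contributing a factor of $\lambda_1$) must match the three-tensor composition rule of Definition \ref{enhanced_composition_rule}. I would settle this by specialising at $\lambda_1 = d$ for $d \gg 0$; in that case both composition rules reduce to honest composition in the Schur algebra $S(d,m)$ acting on $V^{\otimes m}$, where equality is Proposition \ref{composition_rule} combined with the agreement of $F_d(x_D)$ with $\xi_{q_D}$. The Zariski-dense specialisation argument already used in the proof of Proposition \ref{enhanced_properties} then lifts the identity back to the generic ground ring $\mathcal{R}_1$, completing the proof.
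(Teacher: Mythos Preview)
Your direct argument follows the paper's approach closely: the functor you call $G$ is exactly the paper's $Q$, with the same object assignment $[n]\mapsto\bigoplus_\alpha N^\alpha$ and the same morphism assignment $x_{D(q)}\mapsto\xi_q$, and the Zariski-density specialisation check for compatibility of composition is what the paper does as well.

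There is, however, a genuine gap in your essential surjectivity step. The summands $N^\alpha$ of $V^{\otimes m}$ are all isomorphic to objects of the special form $M^{(\lambda_1-r,1^r)}$; projecting onto an $N^\alpha$ therefore does \emph{not} produce a general $M^\beta$ with $\beta\in T_{\lambda_1}$ such as $M^{(\lambda_1-3,2,1)}$. What remains is to show that every $M^\beta$ is a direct summand of some $M^{(\lambda_1-m,1^m)}$. The paper does this explicitly: for $m=\beta_2+\cdots+\beta_{l(\beta)}$ it builds the symmetriser idempotent
\[
e_\beta=\frac{1}{|S|}\sum_{\sigma\in S}\xi_{q(\sigma)},\qquad S=S_{\beta_2}\times\cdots\times S_{\beta_{l(\beta)}},
\]
and checks via specialisation that it cuts out $M^\beta$. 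This is precisely where the characteristic-zero hypothesis is used (to invert $|S|$), and your write-up does not indicate where that hypothesis enters. Without this step the functor on Karoubian envelopes is only an embedding, as the paper's subsequent proposition on positive characteristic makes clear.

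A minor point: your opening ``two ingredients'' shortcut is circular. The equivalence $\underline{{\rm Perm}}_{\lambda_1}\simeq\underline{{\rm Rep}}(S_{\lambda_1})$ promised in the introduction \emph{is} the present theorem; Comes--Ostrik supply structural input about $\underline{{\rm Rep}}(S_t)$, not the equivalence itself.
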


\begin{proof}
We consider a functor $Q:\underline{{\rm Rep}}_0(S_{\lambda_1}) \to \mathcal{C}_{\lambda_1}$, show that it is symmetric monoidal, and that passing to the Karoubian envelope (i.e. the Deligne category) makes $Q$ an equivalence. The functor $Q$ is defined as follows, in analogy to Proposition \ref{tensor_pow_dec}. On objects,
\[
Q([n]) = \bigoplus_{
\substack{
\mbox{set partitions $\alpha$ of $\{1,2,\ldots,n\}$}
}} N^\alpha,
\]
where $N^\alpha$ is isomorphic to $M^{(\lambda_1-l(\alpha), 1^{l(\alpha)})}$. Recall Definition \ref{D_definition}, which gave a way of constructing a partition diagram $D(q)$ from a double coset $q$. We use the analogous extension to elements $q \in T_{\lambda_1}$. On morphisms, $Q(x_{D({q})}) = \xi_{{q}}$. To see that $Q$ is actually a functor, we need to check that it respects composition of morphisms. As in the proof of Proposition \ref{enhanced_properties}, we show this by specialising to symmetric groups $S_d$. We have that $x_{D({r})} x_{D({s})}$ is a sum of $x_{D({q})}$ with coefficients that are polynomials in $\lambda_1$. Similarly, $\xi_{{r}} \xi_{{s}}$ is a linear combination of $\xi_{{q}}$ with coefficients that are polynomials in $\lambda_1$. We need to check that the polynomial coefficients are the same, but we know that $x_{D({q})}$ and $\xi_{{q}}$ agree under the specialisation functors to $S_d$-mod (from the Deligne category and $\mathcal{C}_{\lambda_1}$ respectively). Because these functors do not annihilate $x_{D({q})}$ or $\xi_{{q}}$ provided $d$ is sufficiently large, it follows that the polynomial coefficients agree when evaluated at $n \gg 0$, and hence are equal as polynomials by a Zariski-density argument.
\newline \newline \noindent
An analogous argument shows that $Q$ is a tensor functor (by comparing $x_{D({r})} \otimes x_{D({s})}$ and $\xi_{{r}} \otimes \xi_{{s}}$), and in fact a symmetric tensor functor (because the specialisation functors both respect the symmetric structure). Thus, $Q$ is an embedding of symmetric monoidal categories. It remains to show that when we pass to the Karoubian envelope (and recover $\underline{{\rm Rep}}(S_{\lambda_1})$), then $Q$ becomes an equivalence. For this, it suffices to check that any $M^{{\alpha}}$ is a summand of an object of the form $M^{(\lambda_1 - m, 1^m)}$. For this, we take $m = \alpha_2 + \alpha_3 + \cdots + \alpha_{l(\alpha)}$ (and note that $\lambda_1 - m = \alpha_1$). For a permutation $\sigma$ in $S_m$, let $q(\sigma)$ be the double coset represented by the block sum of the $1\times 1$ matrix $[\lambda_1-m]$ with permutation matrix of $\sigma$. We check that $\xi_{q(\sigma)} \xi_{q(\sigma^\prime)} = \xi_{q(\sigma^\prime\sigma)}$. When considering matrices $A = (a_{ijk})$ as in Definition \ref{enhanced_composition_rule}, the only non-zero entries other than $a_{1,1,1}=\lambda_1-r$ must be of the form $a_{i+1, \sigma(i)+1, \sigma^{\prime}(\sigma(i))+1}$ and take the value $1$ (moreover $\sigma(i)$ determines both $i$ and $\sigma^\prime(\sigma(i))$, so the combinatorial multiplier is always $1$).
\newline \newline \noindent
Now define $S = S_{\alpha_2}\times S_{\alpha_3} \times \cdots \times S_{\alpha_{l(\alpha)}}$ and
\[
e_\alpha = \frac{1}{|S|}\sum_{\sigma \in S} \xi_{q(\sigma)}.
\]
It immediately follows that $e_\alpha$ is an idempotent (it is an idempotent in $kS^{op}$, and the multiplication here is the same). To check that this idempotent defines a summand of $M^{(\lambda_1 - m, 1^m)}$ isomorphic to $M^\alpha$, we again specialise to finite symmetric groups, and simply note that the specialisation of $e_\alpha$ turns the permutation representation on cosets of $S_{(n-m, 1^m)}$ into the permutation representation on cosets of $S_{(n-m, \alpha_2, \ldots, \alpha_{l(\alpha)})}$. Now the required properties of morphisms in our categories easily follow by further Zariski-density arguments.

\end{proof}
\noindent
In fact, this isomorphism only holds in characteristic zero. In positive characteristic, the functor $Q$ still exists, but there are fewer idempotents in the endomorphism algebras of the Deligne category, and not every $M^{\alpha}$ will arise upon taking the Karoubian envelope.

\begin{proposition}
Suppose that $l(\lambda)=1$, and that the ground ring is a field of characteristic $p > 0$. Then $\underline{{\rm Rep}}(S_{\lambda_1})$ embeds into $\mathcal{C}_\lambda$, but it is not monoidally equivalent.
\end{proposition}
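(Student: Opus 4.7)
The plan is to adapt the proof of Theorem \ref{Deligne_equivalence}. The construction of the functor $Q : \underline{{\rm Rep}}_0(S_{\lambda_1}) \to \mathcal{C}_{\lambda}^{(0)}$ given there (on objects by $[n] \mapsto \bigoplus_\alpha N^\alpha$ and on morphisms by $Q(x_{D(q)}) = \xi_q$) is purely combinatorial and does not depend on the characteristic of the base field. The verifications that $Q$ respects composition and the tensor structure rest on Zariski-density arguments comparing polynomial expressions in $\lambda_1$ with integer coefficients against their specializations to $S_d$-mod for $d \gg 0$; these arguments work verbatim in positive characteristic. Because $\{x_{D(q)}\}$ and $\{\xi_q\}$ form bases of the relevant Hom-spaces that $Q$ matches bijectively, the functor is fully faithful on pre-Karoubian categories, and extending to Karoubian envelopes yields the desired fully faithful symmetric monoidal embedding $Q : \underline{{\rm Rep}}(S_{\lambda_1}) \hookrightarrow \mathcal{C}_{\lambda}$.

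For the failure of essential surjectivity, the plan is to exhibit an $\alpha \in T_{\lambda_1}$ such that $M^{\alpha}$ is not a direct summand of any direct sum of $M^{(\lambda_1 - m, 1^m)}$ in $\mathcal{C}_\lambda$. Recall that in the char-zero proof every $M^{\alpha}$ was obtained as the image of the idempotent $e_\alpha = |S|^{-1}\sum_{\sigma \in S}\xi_{q(\sigma)} \in \End(M^{(\lambda_1 - m, 1^m)})$, where $S = \prod_{i \geq 2} S_{\alpha_i}$. This specific formula is meaningless in characteristic $p$ whenever $p \mid |S|$; the natural test case is $\alpha = (\lambda_1 - p, p)$, for which $|S| = p!$. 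I would then argue that no substitute idempotent in $\End_{\mathcal{C}_\lambda}(\bigoplus_i M^{(\lambda_1 - m_i, 1^{m_i})})$ can cut out $M^{\alpha}$, so that $M^{\alpha}$ lies outside the essential image of the Karoubified $Q$.

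The main obstacle is this last non-existence claim. Note that specialization to $S_d$-mod alone does not separate the two categories, since $M^{(d-p, p)}$ is a weight space of $V^{\otimes d}$ and hence always a summand of some $F_d(Q([n]))$; the obstruction must instead be detected at the level of polynomial coefficients in $\lambda_1$. A hypothetical polynomial-in-$\lambda_1$ idempotent cutting out $M^{\alpha}$ would specialize to a coherent family of Young-module idempotents across all sufficiently large $d$, but the normalization factors required to produce these idempotents have denominators forced to be divisible by $p$. I would formalize this by studying $\End_{\mathcal{C}_\lambda}(M^{(\lambda_1-p, 1^p)})$ as a module over $\mathcal{R}_1 \otimes k$ (where $k$ has characteristic $p$), using Proposition \ref{interpolated_eigenvalue} and Example \ref{two_row_module} to identify the relevant eigenvectors and track their $\lambda_1$-denominators, thereby ruling out the existence of the required integral idempotent.
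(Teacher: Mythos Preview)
Your treatment of the embedding is correct and matches the paper: the functor $Q$ from Theorem~\ref{Deligne_equivalence} is defined combinatorially, and the Zariski-density verification of functoriality and monoidality goes through unchanged in characteristic $p$.

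There is, however, a genuine gap in your plan for the second half. You set out to show that $M^{(\lambda_1-p,\,p)}$ is not a summand of any $\bigoplus_i M^{(\lambda_1-m_i,\,1^{m_i})}$, which would only prove that the particular functor $Q$ fails to be essentially surjective. The statement asserts that the two categories are not monoidally equivalent, i.e.\ that \emph{no} monoidal equivalence exists. A hypothetical equivalence need not be $Q$; it could send $[1]$ to an arbitrary object of $\mathcal{C}_\lambda$. The paper closes this gap with an extra reduction step you are missing: since $\underline{\mathrm{Rep}}(S_t)$ is tensor-generated by $[1]$, any monoidal equivalence is determined by the image of $[1]$, which must lie in some finite sum $\bigoplus_\tau M^{(\lambda_1-|\tau|,\tau)}$. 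Analysing the tensor-product rule shows that all $M^{(\lambda_1-|\rho|,\rho)}$ occurring in $(\bigoplus_\tau M^{(\lambda_1-|\tau|,\tau)})^{\otimes r}$ have the parts $\rho_i$ bounded by $\max_j \tau_j$; one then chooses $p^a$ exceeding this bound and shows $M^{(\lambda_1-p^a,\,p^a)}$ is not a summand of any such $\bigoplus M^{(\lambda_1-|\rho|,\rho)}$.

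For this last non-summand claim, the paper's mechanism is also sharper than what you sketch. Rather than searching for obstructions to an idempotent inside $\End(M^{(\lambda_1-p,1^p)})$ by ``tracking $\lambda_1$-denominators'', the paper uses the one-dimensional module $HS^{(\lambda_1-p^a,\,p^a)}$ over $\End(M^{(\lambda_1-p^a,\,p^a)})$ as an obstruction: if $M^{(\lambda_1-p^a,\,p^a)}$ were a summand of $\bigoplus M^{(\lambda_1-|\rho|,\rho)}$, some composite $\xi_r\xi_s$ through an $M^{(\lambda_1-|\rho|,\rho)}$ would have to act nontrivially on $HS^{(\lambda_1-p^a,\,p^a)}$. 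By Example~\ref{two_row_module} only two values of $q$ act nontrivially there, and an explicit computation with Definition~\ref{enhanced_composition_rule} together with the vanishing of $\binom{p^a}{\{\epsilon_j\}}$ modulo $p$ (using $\rho_j<p^a$) shows the two contributions cancel. Your proposal invokes the right ingredients (Proposition~\ref{interpolated_eigenvalue}, Example~\ref{two_row_module}) but does not articulate this cancellation, and the ``coherent family of Young-module idempotents with forced $p$-divisible denominators'' heuristic is not a proof as stated.
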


\begin{proof}
The functor $Q$ from the proof of Theorem \ref{Deligne_equivalence} is an embedding (the proof is the same as in that case). Suppose that there was a monoidal equivalence; as $\underline{\rm Rep}(S_t)$ is tensor-generated by the object $[1]$, the functor is determined on objects by the image of $[1]$. The image of $[1]$ must be a direct summand of some $\bigoplus_{\tau} M^{(\lambda_1 - |\tau|, \tau)}$. This means that the image of $[1]^{\otimes r}$ must be a summand of 
\[
(\bigoplus_{\tau} M^{(\lambda_1 - |\tau|, \tau)})^{\otimes r}.
\]
Consider the tensor-product rule in Definition \ref{enhanced_tensor_rule}; $M^\alpha \otimes M^\beta$ is a sum of $M^\gamma$ for $\gamma \in T_\lambda(\alpha, \beta)$. Such $\gamma$ can be thought of as matrices with row and column sums $\beta$ and $\alpha$ respectively. Note that unless $i=j=1$, one of $\alpha_j$ and $\beta_i$ is an element of $\mathbb{Z}_{\geq 0}$ (rather than $\lambda_1 + \mathbb{Z}$), so in this case $\gamma_{ij}$ is bounded by some entry of $\alpha$ or $\beta$. This means that
\[
(\bigoplus_{\tau} M^{(\lambda_1 - |\tau|, \tau)})^{\otimes r}
\]
is a direct sum of $M^{(\lambda_1 - |\rho|, \rho)}$ for compositions $\rho$, where the parts $\rho_i$ are bounded by the largest part $\tau_j$ of any $\tau$. We let $p^a$ be a power of the prime $p$ that is larger than this bound.
\newline \newline \noindent
Now it suffices to show that $M^{(\lambda_1 - p^a, p^a)}$ cannot arise as a direct summand of an object of the form $\bigoplus M^{(\lambda_1 - |\rho|, \rho)}$. If it was a direct summand, we would have maps
\[
M^{(\lambda_1-p^a,p^a)} \to \bigoplus_\rho M^{(\lambda_1 - |\rho|, \rho)} \to M^{(\lambda_1-p^a,p^a)}
\]
composing to the identity. We have a 1-dimensional module $HS^{(\lambda_1-p^a,p^a)}$ over $\End(M^{(\lambda_1-p^a,p^a)})$, so in particular, for some $\rho$, we must have maps
\[
M^{(\lambda_1-p^a,p^a)} \to M^{(\lambda_1 - |\rho|, \rho)} \to M^{(\lambda_1-p^a,p^a)}
\]
whose composition acts nontrivially on the $\End(M^{(\lambda_1-p^a,p^a)})$-module $HS^{(\lambda_1-p^a,p^a)}$. We show this cannot happen.
\newline \newline \noindent
Note that maps $\xi_r$ from $M^{(\lambda_1-|\rho|,\rho)}$ to $M^{(\lambda_1-p^a,p^a)}$ are indexed by matrices of the form
\[
r = \left( \begin{array}{cccc}
\lambda_1-|\rho|-\epsilon_0 & \rho_1-\epsilon_1 & \cdots & \rho_m-\epsilon_m \\
\epsilon_0 & \epsilon_1 & \cdots & \epsilon_m
\end{array} \right),
\]
where $0 \leq \epsilon_i \leq \rho_i < p^a$ for $i \geq 1$, while $\epsilon_0 = p^a - \sum_{i=1}^m \epsilon_i \geq 0$. Maps $\xi_s$ in the opposite direction are indexed by transposes of such matrices, but we will write $\delta_i$ in place of $\epsilon_i$ and denote such a matrix $s$. Now, we consider $\xi_r \xi_s \in \End(M^{\lambda_1-p^a,p^a})$ and the resulting action on $HS^{(\lambda_1-p^a,p^a)}$. Example \ref{two_row_module} calculates that the scalar by which $\xi_q$ acts on $HS^{(\lambda_1-p^a,p^a)}$, where
\[
q = \left( {\begin{array}{cc}
   \lambda_1 - b & b \\
   b & p^a - b
  \end{array} } \right).
\]
This scalar, $(-1)^b {p^a \choose b}$, vanishes modulo $p$ unless $b=0$ or $b=p^a$, so we will only need to consider the $\xi_q$ of this form.
\newline \newline \noindent
Let $A_{ijk}$ be a 3-tensor as in Definition \ref{enhanced_composition_rule}; it has dimensions $2 \times (m+1) \times 2$. Note that $A_{i 1 k}$ is of the form (where we label rows and columns by their respective sums)
\begin{center}
\begin{tabular}{ cc|c| } 
 \multicolumn{1}{l|}{$a_{i1k}$} & $\lambda_1-|\rho|-\delta_0$ & $\delta_0$ \\  \hline
 \multicolumn{1}{l|}{$\lambda_1-|\rho|-\epsilon_0$} & $a_{111}$ & $a_{112}$ \\  \hline
 \multicolumn{1}{l|}{$\epsilon_0$} & $a_{211}$ & $a_{212}$ \\ 
 \hline
\end{tabular},
\end{center}
while for $j>1$, $A_{ijk}$ has the form
\begin{center}
\begin{tabular}{ cc|c| } 
 \multicolumn{1}{l|}{$a_{ijk}$} & $\rho_{j-1}-\delta_{j-1}$ & $\delta_{j-1}$ \\  \hline
 \multicolumn{1}{l|}{$\rho_{j-1}-\epsilon_{j-1}$} & $a_{1j1}$ & $a_{1j2}$ \\  \hline
 \multicolumn{1}{l|}{$\epsilon_{j-1}$} & $a_{2j1}$ & $a_{2j2}$ \\ 
 \hline
\end{tabular}.
\end{center}
The only way that we may obtain one of the two $\xi_q$ that act nontrivially on $HS^{(\lambda_1-p^a,p^a)}$ is if either $a_{2j1}=a_{1j2}=0$ for all $j$, or $a_{2j2}=0$ for all $j$ (this is because we need $b=0$ or $b=p^a$). In the first case, comparing row and column sums shows $\epsilon_j = \delta_j$ for all $j$ (including $j=0$).
\newline \newline \noindent
In the case $a_{2j1}=a_{1j2}=0$, we obtain
\[
\sum_{\{\epsilon_j\}} {\lambda_1 - p^a \choose \{\lambda_1 - |\rho| - \epsilon_0 \} \cup \{\rho_j-\epsilon_j\}_{j \geq 1} }{p^a \choose \{\epsilon_j\}} \xi_q,
\]
where the sum is over all possible choices of $\{\epsilon_i \}$, and each factor is a multinomial coefficient. Now, the second binomial coefficient vanishes modulo $p$ unless $\epsilon_j = 0$ for $j \geq 1$ (our earlier bound, $\epsilon_j < p^a$ for $j \geq 1$, rules out the possibility that $\epsilon_j=p^a$). Hence, $\epsilon_0=p^a$. Thus only one summand contributes, and we obtain
\[
{\lambda_1 - p^a \choose (\lambda_1 - |\rho| - p^a, \rho )}\xi_q,
\]
where the scalar is a multinomial coefficient. Note that in this case $\xi_q$ acts by the scalar $(-1)^0{p^a \choose 0} = 1$.
\newline \newline \noindent
In the case $a_{2j2}=0$, we obtain 
\[
\sum_{\{\epsilon_j\}} {\lambda_1 - 2p^a \choose \{\lambda_1 - |\rho|-\delta_0-\epsilon_0 \} \cup \{\rho_j-\delta_j-\epsilon_j\}_{j \geq 1} }{p^a \choose \{\delta_j\}}{p^a \choose \{\epsilon_j\}}\xi_q.
\]
As before, the only way to obtain something nonzero modulo $p$ is for $\epsilon_j = 0$ for $j \geq 1$, so $\epsilon_0=p^a$, and similarly for $\delta_j$. We get
\[
{\lambda_1 - 2p^a \choose \{\lambda_1 - |\rho|-2p^a \} \cup \{ \rho_i \}}\xi_q.
\]
Note that in this case $\xi_q$ acts by the scalar $(-1)^{p^a} {p^a \choose p^a} = (-1)^{p^a}$.
\newline \newline \noindent
Because $(-1)^{p^a}$ is always congruent to $-1$ modulo $p$, to conclude that the total action is zero it suffices to see that, modulo $p$,
\[
{\lambda_1 - p^a \choose \{\lambda_1-|\rho|-p^a \}\cup \{ \rho_i \}} = {\lambda_1 - 2p^a \choose \{\lambda_1-|\rho|-2p^a \} \cup \{ \rho_i \}},
\]
which follows from the following identity. Let us abbreviate the binomial coefficients to (for example) ${\lambda_1-p^a  \choose \rho}$. Then,
\[
{(\lambda_1-2p^a)+p^a  \choose \rho}
= \sum_{\rho^\prime}
{\lambda_1-2p^a  \choose \rho-\rho^\prime}
{p^a \choose \rho^\prime},
\]
where the sum is over compositions $\rho^\prime$, and $\rho-\rho^\prime$ indicates componentwise subtraction. Note that a term in the sum is zero unless the parts of $\rho^\prime$ are bounded by those of $\rho$, hence by $p^a$. Then, ${p^a \choose \rho^\prime}$ is divisible by $p$ unless $\rho^\prime$ is the empty composition. This gives the required congruence.

\end{proof}

\begin{proposition}
The category $\mathcal{C}_\lambda$ is a module category over $\mathcal{C}_{|\lambda|}$ (where $|\lambda| = \lambda_1 + \cdots + \lambda_l$), with the tensor product given by the formula in Proposition \ref{enhanced_tensor_rule}. If $\mu \models d$, then the specialisation functor $F_d \otimes F_\mu : \mathcal{C}_{|\lambda|} \otimes \mathcal{C}_{\lambda} \to S_d\mbox{-mod} \otimes S_d\mbox{-mod}$ is compatible with the usual tensor structure of $S_d$-mod.
\end{proposition}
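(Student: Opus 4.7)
The plan is to adapt the proof of Proposition \ref{enhanced_tensor_rule} to build the required action bifunctor $\otimes : \mathcal{C}_{|\lambda|} \times \mathcal{C}_\lambda \to \mathcal{C}_\lambda$. On objects, for $M^\alpha \in \mathcal{C}_{|\lambda|}$ (so $\alpha_1 \in |\lambda| + \mathbb{Z}$ and $\alpha_j \in \mathbb{Z}_{\geq 0}$ for $j \geq 2$) and $M^\beta \in \mathcal{C}_\lambda$, I would set
\[
M^\alpha \otimes M^\beta = \bigoplus_\gamma M^\gamma,
\]
where $\gamma$ ranges over $l(\beta) \times l(\alpha)$ matrices with row sums $\beta$ and column sums $\alpha$, subject to the integrality constraint that $\gamma_{i,1} \in \lambda_i + \mathbb{Z}$ for $1 \leq i \leq l$ and every other entry lies in $\mathbb{Z}_{\geq 0}$. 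Read as a composition, such a $\gamma$ lies in $T_\lambda$, so $M^\gamma$ is a valid object of $\mathcal{C}_\lambda$. The sum is finite because each ``ordinary'' entry is bounded by some $\mathbb{Z}_{\geq 0}$-valued row or column sum, and the $l$ distinguished entries are then determined up to finitely many choices by the first-column sum constraint. Notably, the asymmetry between the single large part of $\alpha$ and the $l$ large parts of $\beta$ is what forces this finiteness, in contrast to the self-tensor product in the case $l(\lambda) > 1$.

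On morphisms I would extend Proposition \ref{tensor_rule} in the same manner that Definition \ref{enhanced_composition_rule} extended Proposition \ref{composition_rule}: the restriction of $\xi_{q^{(1)}} \otimes \xi_{q^{(2)}}$ to a map $M^\gamma \to M^{\gamma'}$ is a formal sum over 4-tensors $T$ satisfying the four marginal sum conditions of Proposition \ref{tensor_rule}, where entries of $T$ connecting the ``large'' first column of $\alpha$ or $\alpha'$ to the ``large'' first $l$ rows of $\beta$ or $\beta'$ are allowed to lie in the appropriate $\lambda_i + \mathbb{Z}$, and all other entries lie in $\mathbb{Z}_{\geq 0}$. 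Finiteness of this sum and the fact that the structure constants lie in $\mathcal{R}_l$ then follow exactly as in parts (1) and (2) of Proposition \ref{enhanced_properties}: each multinomial coefficient appearing has at most one parameter outside $\mathbb{Z}_{\geq 0}$, and each off-distinguished entry of $T$ is bounded by some known non-negative integer.

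Bifunctoriality (compatibility with composition in each slot), the module associator axiom, and compatibility of $F_d \otimes F_\mu$ with the tensor structure on $S_d$-mod all amount to polynomial identities in $\mathcal{R}_l$, which I would verify via the Zariski-density specialisation argument of part (3) of Proposition \ref{enhanced_properties}: after specialising $\lambda$ to an integer partition $\mu$ with $\mu_1 \geq \mu_2 \geq \cdots \geq \mu_l$ sufficiently large, each identity reduces to a known statement about the ordinary tensor product of permutation $S_{|\mu|}$-modules (namely Proposition \ref{tensor_rule} applied to $S_{|\mu|}$), and hence holds on a Zariski-dense subset of specialisations, hence identically.

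The main obstacle is the combinatorial bookkeeping: one must correctly identify which positions in $\gamma$ and in the $4$-tensor $T$ are allowed to take values in $\lambda_i + \mathbb{Z}$, and verify that these integrality patterns are preserved under both the proposed action and the associator comparison $(M^\alpha \otimes M^{\alpha'}) \otimes M^\beta \cong M^\alpha \otimes (M^{\alpha'} \otimes M^\beta)$, where the asymmetric roles of the $\mathcal{C}_{|\lambda|}$- and $\mathcal{C}_\lambda$-factors must be tracked. Once this is in place, every remaining verification reduces directly to the $l = 1$ argument carried out in Proposition \ref{enhanced_tensor_rule}.
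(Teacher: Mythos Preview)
Your proposal is correct and follows essentially the same approach as the paper: the paper's proof simply states that the argument is completely analogous to Proposition~\ref{enhanced_properties}, singling out only the finiteness of the object-level sum as the one point requiring comment, and proves it by the same observation you make (all parts of $\alpha$ except $\alpha_1$ are nonnegative integers, bounding all entries of $\gamma$ outside the first column, after which the first column is determined by the row sums). Your write-up is more detailed than the paper's, but the content and strategy are the same.
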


\begin{proof}
The proof is completely analogous to the proof of Proposition \ref{enhanced_properties}. The only point that should be made is that a tensor product of two objects gives a finite sum of objects. This is because if $\alpha \in T_{|\lambda|}$ and $\beta \in T_\lambda$, we need to consider $\gamma$ whose row and column sums are $\alpha$ and $\beta$ respectively. However, all entries of $\alpha$ except $\alpha_1$ are nonnegative integers, and hence give a bound on the entries of $\gamma$ that are not in the first row. The first row is determined by the other entries using the column-sum conditions.
\end{proof}

\noindent
We now discuss tensor ideals in $\mathcal{C}_\lambda$, which allow us to define a quotient module category. In characteristic zero, it turns out that if $\lambda_i \notin \mathbb{Z}_{\geq 0}$, there will not be any proper tensor ideals, making $\mathcal{C}_\lambda$ into a ``simple" module category.
\begin{definition}
Suppose that $\mathcal{C}$ is a tensor category, and $\mathcal{M}$ is a module category (so there is an action map $\mathcal{C} \times \mathcal{M} \to \mathcal{M}$). A tensor ideal $I$ of $\mathcal{M}$ is a subspace $I(M_1, M_2)$ of each hom-space $\Hom_{\mathcal{M}}(M_1, M_2)$ for every $M_1, M_2 \in \mathcal{M}$ such that the following properties hold.
\begin{enumerate}
\item The composition of a morphism in $I$ with a morphism in $\mathcal{M}$ (on either the left or right) is another morphism in $I$.
\item The tensor product of a morphism in $I$ with a morphism in $\mathcal{C}$ (on either the left or right) is another morphism in $I$.
\end{enumerate}
In this situation, one may define the quotient category $\mathcal{M}/I$ with the same objects as $\mathcal{M}$, but whose hom-sets are
\[
\Hom_{\mathcal{M}/I} (M_1, M_2) = \Hom_{\mathcal{M}}(M_1, M_2)/I(M_1, M_2).
\]
This is a well-defined category, which inherits the structure of a module category over the tensor category $\mathcal{C}$.
\end{definition}

\begin{proposition}
Suppose that we work over a field of characteristic zero, and $\lambda$ has been specialised to take values in the ground field, i.e. each $\lambda_i$ is a scalar rather than a variable. Let $J$ be a subset of $\{1,2, \ldots, l(\lambda)\}$. Define $I_J(M^\alpha, M^\beta)$ to be the span of $\xi_q$ ($q \in T_\lambda(\alpha, \beta)$) such that $q_{ii} \in \mathbb{Z}_{<0}$ for some $i \in J$. Then $I_J$ is a tensor ideal.
\end{proposition}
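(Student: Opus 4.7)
The plan is to verify the two defining properties of a tensor ideal by direct combinatorial analysis of the composition and tensor-product formulas (Definition \ref{enhanced_composition_rule} and the module-category generalization of Proposition \ref{tensor_rule}). In both cases I would fix $\xi_s \in I_J$ with $s_{dd} \in \mathbb{Z}_{<0}$ for some $d \in J$, expand $\xi_r\xi_s$ or $\xi_r \otimes \xi_s$ into basis elements $\xi_q$ or $\xi_T$, and show that any such basis element appearing with nonzero coefficient still has a diagonal entry indexed by an element of $J$ specializing to a negative integer.

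For composition, expand $\xi_r\xi_s$ over $3$-tensors $A=(a_{ijk})$. The constraint $\sum_i a_{idd}=s_{dd}$, combined with the requirement that the off-diagonal entries $a_{idd}$ ($i \neq d$) be non-negative integers, forces $a_{ddd}$ to specialize to a negative integer. Writing $q_{dd}=a_{ddd}+c$ with $c=\sum_{j\neq d}a_{djd}\in\mathbb{Z}_{\ge 0}$, the factor of $V(A)$ at indices $(i,k)=(d,d)$ contains
\[
\binom{a_{ddd}+c}{c}=\frac{(a_{ddd}+1)(a_{ddd}+2)\cdots(a_{ddd}+c)}{c!},
\]
which as a polynomial in $\lambda_d$ vanishes precisely when $a_{ddd}\in\{-1,\ldots,-c\}$, i.e.\ when $0\le q_{dd}<c$. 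Hence any $\xi_q$ with nonzero coefficient in $\xi_r\xi_s$ satisfies $q_{dd}<0$, placing $\xi_q \in I_J$. The composition $\xi_s\xi_r$ on the other side is handled symmetrically by swapping the roles of the indices.

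For the tensor product $\xi_r \otimes \xi_s$ with $\xi_r$ in $\mathcal{C}_{|\lambda|}$, I would expand over $4$-tensors $T_{ijkl}$. The summands $M^\gamma$ of $M^a \otimes M^c$ (for $a \in T_{|\lambda|}$ and $c \in T_\lambda$) have their special entries at matrix positions $(1,d)$ for $d \leq l(\lambda)$, each in $\lambda_d+\mathbb{Z}$; after flattening $\gamma$ and $\gamma'$ into $T_\lambda$-compositions with those special entries placed first, the special diagonal entries of $T$ (as a morphism in $\mathcal{C}_\lambda$) are the $4$-tensor entries $T_{1,d,1,d}$ for $d \leq l(\lambda)$. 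The constraint $\sum_{ik} T_{idkd} = s_{dd}$ then contains exactly one potentially special summand, namely $T_{1,d,1,d}$ at $(i,k)=(1,1)$, with all others non-negative integers; since $s_{dd}<0$, this immediately forces $T_{1,d,1,d}<0$, so $\xi_T \in I_J$. The main obstacle is correctly identifying the special positions of the $4$-tensor $T$ in the module-category setting when $l(\lambda)>1$; once the orientation of $\gamma$ is pinned down (row sums from $T_{|\lambda|}$, column sums from $T_\lambda$, so that special entries occupy the intersection of the special row and the special columns), both closure properties follow from elementary non-negativity together with the single polynomial vanishing identity in the composition case.
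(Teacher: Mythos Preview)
Your proof is correct and follows the same approach as the paper: for composition you show the multinomial factor at $(i,k)=(d,d)$ vanishes whenever the output diagonal entry $q_{dd}$ is nonnegative (your explicit binomial identity $\binom{a_{ddd}+c}{c}=0$ for $a_{ddd}\in\{-1,\dots,-c\}$ is equivalent to the paper's ``factorial of a negative integer is formally infinite'' argument), and for the tensor product you show the unique special entry of the $4$-tensor is forced negative by the constraint $\sum_{ik}T_{idkd}=s_{dd}<0$. Your identification of that special entry as $T_{1,d,1,d}$ in the module-category setting is in fact more careful than the paper's own indexing.
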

\begin{proof}
Consider $\xi_r \xi_q$ with the assumption that $\xi_q \in I_J$, and in particular $q_{mm} \in \mathbb{Z}_{<0}$ with $m \in J$. By using Definition \ref{enhanced_composition_rule}, let $A_{ijk}$ be a 3-tensor such that $\sum_{i} A_{ijk} = q_{jk}$, but $\sum_{j} A_{mjm} \geq 0$. Then, the coefficient arising for such an $A$ is
\[
\prod_{i,k} \frac{\left(\sum_j A_{ijk}\right)!}{\prod_j A_{ijk}!}.
\]
The term of this product corresponding to $i=k=m$ is a fraction whose numerator is the factorial of a nonnegative integer, but whose denominator involves the factorial of a negative integer (formally equal to infinity). As a result, the quantity is zero, so $\xi_r \xi_q \in I_J$. The case of $\xi_q \xi_r$ is identical. The argument for $\xi_r \otimes \xi_q$ is similar. Using Proposition \ref{enhanced_tensor_rule}, and considering tensors $T_{ijkl}$ arising from summands $\xi_T$ in $\xi_r \otimes \xi_q$, the constraint $\sum_{ik} T_{imkm} = q_{mm} < 0$ shows that $T_{ijkl}$ must have a negative entry (necessarily at $(i,j,k,l)=(m,m,m,m)$) if $q$ does. \end{proof}
\noindent
Of course, if $J \subseteq J^\prime$, then $I_J \subseteq I_{J^\prime}$. Hence among these tensor ideals there is a largest one, corresponding to $J = \{1,2,\ldots, l(\lambda)\}$. However, these need not be distinct, for instance, if all $\lambda_i$ are not integers, then any $I_J$ is zero. One reason to consider $I_J$ is if some $\lambda_i$ is a negative integer; in this case, if $i \in J$ then $I_{J}$ contains every morphism in the category. Analogously, if $\lambda_i \notin\mathbb{Z}$, then adding or removing $i$ from $J$ does not change the tensor ideal $I_J$. Thus the largest proper tensor ideal among these corresponds to $J_{max} = \{1 \leq i \leq l(\lambda) \mid \lambda_i \in \mathbb{Z}_{\geq 0}\}$. In view of the similarity to the specialisation functors $F_\mu$ (whose kernels are spanned by $\xi_q$ for $q$ that have a negative integer entry when $\lambda$ is evaluated at $\mu$), quotienting by $I_J$ may be viewed as a ``partial specialisation''. We now show that the quotient by the largest proper tensor ideal among the $I_J$ yields a simple module category (i.e. one with no nontrivial tensor ideals).
\begin{theorem} \label{tensor_ideal_theorem}
Suppose that we work over a field of characteristic zero. Let $J_{max} = \{1 \leq i \leq l(\lambda) \mid \lambda_i \in \mathbb{Z}_{\geq 0}\}$, and consider $\mathcal{C}_\lambda/I_{J_{max}}$. This module category has no nontrivial tensor ideals.
\end{theorem}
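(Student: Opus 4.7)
The plan is to show that any nonzero tensor ideal $I$ in $\mathcal{C}_\lambda / I_{J_{max}}$ contains the identity morphism of some $M^\alpha$, and then to propagate this to every object using the module-category action of $\mathcal{C}_{|\lambda|}$. The principal tool is the one-dimensional $\End(M^\alpha)$-module $HS^\alpha$ from Proposition \ref{interpolated_eigenvalue}, which gives a character $\chi_\alpha : \End(M^\alpha) \to k$ for every $\alpha \in T_\lambda$.

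The first step is to verify that $\chi_\alpha$ factors through the quotient by $I_{J_{max}}$. For $\xi_q$ with $q_{ii} < 0$ and $i \in J_{max}$, the scalar by which $\xi_q$ acts on $HS^\alpha$ is the coefficient of $\prod_{j,k} x_{jk}^{q_{jk}}$ in the generating function of Lemma \ref{eigenvalue_lemma}. Once $\lambda_i \in \mathbb{Z}_{\geq 0}$, the factor $\det((x_{jk})_{j,k=1}^r)^{\alpha_r - \alpha_{r+1}}$ for $r \geq i$ is an honest polynomial in $(x_{jk})$, and every monomial in its expansion has non-negative exponent on the diagonal variable $x_{ii}$. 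Hence the relevant coefficient vanishes, and $\chi_\alpha$ descends to $\End_{\mathcal{C}_\lambda / I_{J_{max}}}(M^\alpha)$, remaining one-dimensional.

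For the second step, take a nonzero $f \in I(M^\alpha, M^\beta)$ and produce $g \in \Hom(M^\beta, M^\alpha)$ with $\chi_\alpha(g \circ f) \neq 0$; then $g \circ f \in I \cap \End(M^\alpha)$ acts as a nonzero scalar on $HS^\alpha$. For this I use Zariski density. Consider specialisations $\mu$ with $\mu_i = \lambda_i$ for $i \in J_{max}$ and $\mu_i \gg 0$ otherwise; these factor through the quotient by part (3) of Proposition \ref{enhanced_properties} together with the first step. For generic such $\mu$, $F_\mu(f) \neq 0$, and Lemma \ref{sss_lemma} (applied with trivial $N$) provides a symmetric-group-level morphism whose composition with $F_\mu(f)$ acts non-trivially on the Specht line $\Hom(S^{\phi_\mu(\alpha)}, M^{\phi_\mu(\alpha)})$; this action is precisely the specialisation of $\chi_\alpha$ to $\mu$. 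A Zariski-density argument of the type used in Proposition \ref{enhanced_properties} then lifts these symmetric-group morphisms to a single $g$ in the quotient with $\chi_\alpha(g \circ f) \neq 0$.

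After rescaling so that $\chi_\alpha(g \circ f) = 1$, the element $e := g \circ f - \mathrm{id}_{M^\alpha}$ lies in $\ker \chi_\alpha$. The crucial remaining claim is that $\ker \chi_\alpha$ is contained in the tensor ideal generated by $I$, from which $\mathrm{id}_{M^\alpha} = g \circ f - e \in I$ follows. Finally, any other object $M^\gamma$ arises as a direct summand of some $M^\alpha \otimes M^\delta$ with $\delta \in T_{|\lambda|}$ (taking $\delta$ so that the tensor rule of Proposition \ref{enhanced_tensor_rule} produces $\gamma$), so $\mathrm{id}_{M^\gamma} \in I$ by tensoring $\mathrm{id}_{M^\alpha}$ with $\mathrm{id}_{M^\delta}$ in the sense of the module-category action and applying the Karoubian idempotent for the summand. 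The main obstacle is the claim $\ker \chi_\alpha \subseteq \langle I \rangle$: since $\mathcal{C}_\lambda$ is only Karoubian and may fail to be Krull–Schmidt (see the appendix), $\End_{\mathcal{C}_\lambda / I_{J_{max}}}(M^\alpha)$ can have a delicate radical, and controlling it requires combining the $HS^\gamma$ across all $\gamma$ reachable by tensoring with $\mathcal{C}_{|\lambda|}$, together with the Zariski density of the generic specialisations into the semisimple category $\mathbb{Q}S_n\text{-mod}$, where the corresponding statement is classical.
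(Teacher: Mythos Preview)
Your approach diverges substantially from the paper's, and the central step is a genuine gap rather than a sketch. You reduce everything to the claim that $\ker \chi_\alpha$ lies in the tensor ideal generated by $I$, and then say only that this ``requires combining the $HS^\gamma$ across all $\gamma$ reachable by tensoring with $\mathcal{C}_{|\lambda|}$, together with Zariski density''. But this is precisely the content of the theorem: knowing that $g\circ f\in I$ has $\chi_\alpha(g\circ f)=1$ buys nothing unless you already control $\ker\chi_\alpha$ modulo $I$, and nothing earlier in your argument gives that control. In a Karoubian, non-Krull--Schmidt setting there is no reason for a single character to cut out the Jacobson radical, let alone to force $\ker\chi_\alpha\subseteq I$ for an \emph{arbitrary} nonzero tensor ideal $I$. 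Your step~3 is also shaky: even when $F_\mu(f)\neq 0$, it may vanish on the Specht summand $S^{\phi_\mu(\alpha)}\subset M^{\phi_\mu(\alpha)}$, so the pairing $(g,f)\mapsto \chi_\alpha(g\circ f)$ need not be non-degenerate, and the ``lift a family of $g$'s to a single $g$'' move is not justified.

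The paper's argument avoids $HS^\alpha$ entirely and is constructive. Starting from any morphism in $I\setminus I_{J_{max}}$, it picks the term $\xi_{q^*}$ with maximal off-diagonal sum (among those $q$ with no negative-integer diagonal entries), then repeatedly tensors with the identity of $M^{(|\lambda|-1,1)}$ in $\mathcal{C}_{|\lambda|}$ and projects to a chosen summand. Combinatorially this decrements a specified entry $q_{ij}$ and appends a new size-one row and column; iterating on the off-diagonal entries of $q^*$ drives it to an identity morphism while annihilating every other $\xi_q$ (their off-diagonal sums are strictly smaller, so some decrement hits a zero entry). This places $\Id_{M^\alpha}\in I$ for a specific $\alpha$, and then an explicit computation of $\xi_{r^T}\xi_r$ shows $\Id_{M^\beta}$ factors through that $M^\alpha$ with a scalar that is nonzero exactly under the $J_{max}$ hypothesis. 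The key idea you are missing is this ``leading term under tensoring by $M^{(|\lambda|-1,1)}$'' mechanism, which isolates one $\xi_q$ from a linear combination without any character theory.
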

\begin{proof}
It suffices to show that any tensor ideal of $\mathcal{C}_\lambda$ properly containing $I_{J_{max}}$ is all of $\mathcal{C}_\lambda$. Suppose that $I$ is such a tensor ideal. Then $I$ contains a morphism $M^\alpha \to M^\beta$ which is a linear combination of some $\xi_q$, at least one of which has no diagonal entries that are negative integers. Among such $q$, let $q^{*}$ be the one with maximal sum of off-diagonal entries. Let us tensor with $M^{(|\lambda|-1,1)}$ (an object of $\mathcal{C}_{|\lambda|}$) and consider the tensor product of the identity of $M^{(|\lambda|-1,1)}$ with $\xi_{q}$. By the tensor product rule, we obtain a sum of $\xi_T$, where $T_{ijkl}$ satisfies
\begin{eqnarray*}
\sum_{ik} T_{ijkl} &=& q_{jl} \\
\sum_{jl} T_{1j1l} &=& |\lambda|-1 \\
\sum_{jl} T_{1j2l} &=& 0 \\
\sum_{jl} T_{2j1l} &=& 0 \\
\sum_{jl} T_{2j2l} &=& 1.
\end{eqnarray*}
So, $T$ is essentially obtained from $q$ by decrementing an entry, say $q_{ij}$, and adding a new row and column with a 1 at their intersection (considered a diagonal entry). This is a morphism from $M^{\alpha^\prime}$ to $M^{\beta^\prime}$, where $\alpha^\prime$ is obtained from $\alpha$ by decrementing $\alpha_j$ and adding a part of size $1$ and similarly $\beta^\prime$ is obtained from $\beta$ by decrementing $\beta_i$ and adding a part of size $1$. We may then compose on the left by projector onto $M^{\beta^\prime}$ as a summand of $M^{(|\lambda|-1,1)} \otimes M^{\beta}$ and on the right by the projector onto $M^{\alpha^\prime}$ as a summand of $M^{(|\lambda|-1,1)} \otimes M^\alpha$. This has the effect of decrementing the $(i,j)$-th entry of $q$ in all $\xi_q$, and appending a new row and column with a one at their intersection (considered as diagonal, so the number of off-diagonal entries decreases). Decrementing an off-diagonal entry equal to zero yields a zero morphism. The upshot of this is that we may iterate this procedure, decrementing an arbitrary sequence of coordinates $(i,j)$ until $\xi_{q^*}$ becomes the identity morphism (all off-diagonal entries equal to zero). Because of our assumption that $q^*$ had the largest sum of off-diagonal entries among $\xi_q$ occuring in our original morphism, it follows that all other $\xi_q$ are killed by this process. Hence our tensor ideal must contain the identity morphism of some $M^{\alpha}$. In fact, we may continue this operation to obtain identity morphisms for more and more $M^\alpha$; we obtain any $\alpha$ of the form 
\[
\alpha = (\lambda_1 - \rho_1, \ldots, \lambda_l - \rho_l, 1^{\sum_i \rho_i}).
\]
where $\lambda_i - \rho_i$ are less than or equal to the diagonal entries of $q^*$ (in particular, we may take $\alpha$ with entries that are not negative integers). Note that so far we have not used our assumption about the values of $\lambda_i$, nor about the characteristic of the ground field.
\newline \newline \noindent
Now we show that the identity morphism of any object factors through one of the $M^\alpha$ that we have constructed. Let 
\[
\beta = (\lambda_1 - \sigma_1, \ldots, \lambda_l - \sigma_l, \tau_1, \ldots, \tau_r)
\]
and let us take $\alpha$ with $\rho_k \geq \sigma_k$. Then we may consider $\xi_r$, where for $1 \leq i,j \leq l$, $r_{ij} = \delta_{ij}(\lambda_i - \rho_i)$, and all other entries are zero or one such that $r \in T_\lambda(\alpha, \beta)$. Let us compute $\xi_{r^T} \xi_r$. We consider $A_{ijk}$ with sums $\sum_k A_{ijk} = (r^T)_{ij} = r_{ji}$ and $\sum_i A_{ijk} = r_{jk}$. For any fixed $j$, there is a unique value $m_j$ such that $r_{jm_j} \neq 0$. Thus $A_{ijk} = 0$ unless $i=k=m_j$. But this means that $\sum_j A_{ijk}$ is a diagonal matrix. So the result of this computation is a scalar multiple of the identity morphism. It remains to calculate the scalar. This amounts to keeping track of the values of $A_{m j m}$ for each $m$. They are either $\lambda_k-\sigma_k$ (which happens once for each of the first $l$ diagonals), or $1$ (and the multiplicity is determined by the total sum). Hence, we obtain
\begin{eqnarray*}
& &\prod_{k = 1}^l \frac{(\lambda_k - \sigma_k)!}{(\lambda_k - \rho_k)! 1!^{\rho_k - \sigma_k}} \times \prod_{i > l} \frac{\tau_i!}{1!^{\tau_i}} \\
&=& \prod_{k=1}^l {\lambda_k - \sigma_k \choose \rho_k - \sigma_k}(\rho_k - \sigma_k)! \times \prod_{i>l} \tau_i!.
\end{eqnarray*}
The only way this expression could vanish in characteristic zero is if one of the binomial coefficients was zero. But this can only happen if some $\lambda_k$ is an integer and $0 \leq \lambda_k - \sigma_k < \rho_k-\sigma_k$. In particular, this would imply $\lambda_k < \rho_k$. However, we could assume that $\lambda_k - \rho_k$ are nonnegative integers. This completes the proof.
\end{proof}
\noindent
In particular, $\mathcal{C}_\lambda$ can be thought of as being ``generically simple'' as a module category. It is also minimal in a certain sense, as the following proposition demonstates.

\begin{proposition}
Let $\mathcal{D}$ be a Karoubian subcategory of $\mathcal{C}_\lambda$ such that $\mathcal{D}$ contains $M^\alpha$ for all $\alpha \in T_\lambda$ and $\mathcal{D}$ is a module subcategory of $\mathcal{C}_\lambda$. Then $\mathcal{D} = \mathcal{C}_\lambda$.
\end{proposition}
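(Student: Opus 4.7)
The goal is to show every basis morphism $\xi_q : M^\alpha \to M^\beta$ of $\mathcal{C}_\lambda$ lies in $\mathcal{D}$; since these span $\Hom(M^\alpha, M^\beta)$ in $\mathcal{C}_\lambda^{(0)}$ and $\mathcal{C}_\lambda$ is its Karoubian envelope, this will suffice. The morphisms already guaranteed to be in $\mathcal{D}$ are: (i) the identities $\mathrm{id}_{M^\alpha}$ for every $\alpha \in T_\lambda$, since $M^\alpha \in \mathcal{D}$; (ii) inclusion and projection morphisms between direct summands, since $\mathcal{D}$, being Karoubian (hence additive), contains all finite direct sums of the $M^\alpha$; and (iii) morphisms of the form $f \otimes \mathrm{id}_{M^\gamma}$ for any $f$ in $\mathcal{C}_{|\lambda|}$ and any $M^\gamma \in \mathcal{D}$, by the module-subcategory hypothesis.

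The strategy is to exhibit, for each $\xi_q : M^\alpha \to M^\beta$, a factorization
\[
M^\alpha \xrightarrow{\iota} M^X \otimes M^\lambda \xrightarrow{\xi_p \otimes \mathrm{id}_{M^\lambda}} M^Y \otimes M^\lambda \xrightarrow{\pi} M^\beta,
\]
where $M^\lambda := M^{(\lambda_1, \ldots, \lambda_l)}$, the objects $M^X, M^Y$ of $\mathcal{C}_{|\lambda|}$ are chosen so that $M^\alpha$ and $M^\beta$ appear as summands of $M^X \otimes M^\lambda$ and $M^Y \otimes M^\lambda$ respectively (via the tensor product rule extended to the module action), the morphism $\xi_p \in \Hom_{\mathcal{C}_{|\lambda|}}(M^X, M^Y)$ is chosen to mirror the combinatorial structure of $q$, and $\iota, \pi$ are the associated summand inclusion and projection. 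All three pieces lie in $\mathcal{D}$ by (i)--(iii), so the composition does too. The underlying conceptual claim is that $\mathcal{C}_\lambda$ is module-generated over $\mathcal{C}_{|\lambda|}$ by the single object $M^\lambda$ together with the module action and Karoubian completion; this is a direct analog of the argument in Theorem \ref{Deligne_equivalence} which showed (for $l(\lambda)=1$) that every $M^\alpha$ is a summand of some $M^{(\lambda_1-m,1^m)}$.

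The main obstacle is verifying that the composition really equals $\xi_q$, up to a non-zero scalar in the integral domain $\mathcal{R}_l$ which can be normalized away. Concretely, one needs to (a) match $M^\alpha$ and $M^\beta$ with specific summands of $M^X \otimes M^\lambda$ and $M^Y \otimes M^\lambda$, by identifying the flattened matrix of a summand with the composition $\alpha$ (resp.\ $\beta$) in $T_\lambda$; (b) compute the coefficient of $\xi_q$ in $\pi \circ (\xi_p \otimes \mathrm{id}) \circ \iota$ using the multinomial formulas of Definition \ref{enhanced_composition_rule} and Proposition \ref{enhanced_tensor_rule}, and confirm it is non-zero; and (c) handle inductively (on the sum of off-diagonal entries $n(q) := \sum_{i \neq j} q_{ij}$) any additional summands $\xi_{q'}$ that appear in the composition besides $\xi_q$ itself, subtracting them off using the inductive hypothesis that every $\xi_{q'}$ with $n(q') < n(q)$ is already in $\mathcal{D}$. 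Each verification follows the combinatorial template of the proof of Theorem \ref{tensor_ideal_theorem}, using the Zariski-density specialization argument to check polynomial identities via evaluation at partitions $\mu$ with $\mu_1 \geq \mu_2 \geq \cdots \geq \mu_l \gg 0$.
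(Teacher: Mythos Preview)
Your overall strategy---produce $\xi_q$ by tensoring identity morphisms in $\mathcal{D}$ with morphisms in $\mathcal{C}_{|\lambda|}$ and then projecting to summands---is the same as the paper's, but the execution differs in two ways worth noting.

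First, the paper does not attempt to build an arbitrary $\xi_q$ directly. Instead it invokes Theorem~\ref{gl_schur_equivalence} (the $\dot{U}(\mathfrak{gl}_\infty)$ presentation) to reduce the problem to the \emph{generators} $\xi_q$ where $q$ has a single nonzero off-diagonal entry $q_{ij}=n$. For such a $q$ it tensors $\mathrm{id}_{M^\alpha}$ with the unique map $f:M^{(|\lambda|)}\to M^{(|\lambda|-n,n)}$, and $\mathrm{id}_{M^\beta}$ with $g:M^{(|\lambda|-n,n)}\to M^{(|\lambda|)}$, projects onto the correct summands to get $\xi_r$ and $\xi_s$, and then checks by direct computation that $\xi_s\xi_r=\xi_q$ on the nose. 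No induction, no error terms, no normalization.

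Second, and this is where your sketch has a genuine gap: you write that the composition will equal $\xi_q$ ``up to a non-zero scalar in the integral domain $\mathcal{R}_l$ which can be normalized away.'' But $\mathcal{R}_l$ is not a field, so a non-unit scalar cannot be divided out. If your factorization produces, say, $2\xi_q+(\text{lower terms})$, you would only conclude $2\xi_q\in\mathcal{D}$, which is not enough. The paper's choice of the specific objects $M^{(|\lambda|)}$ and $M^{(|\lambda|-n,n)}$ and of the single-off-diagonal generators is exactly what makes the leading coefficient equal to $1$. Your more general template with unspecified $M^X,M^Y,\xi_p$ and an induction on $n(q)$ could be made to work, but you would have to identify concrete choices giving leading coefficient $1$ (or else restrict the statement to a ground field), and that verification is the real content of the argument.
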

\begin{proof}
What we must prove is that $\mathcal{D}$ contains all morphisms of $\mathcal{C}_\lambda$, in particular by Theorem \ref{gl_schur_equivalence} (proved in the next section), the morphisms in the category are generated by $\xi_q$ where $q$ has a single nonzero off-diagonal entry. Thus it suffices to show that all these morphisms are contained in $\mathcal{D}$.
\newline \newline \noindent
Choose $q \in T_\lambda(\alpha, \beta)$ with the single nonzero off-diagonal entry $q_{ij} = n$. (In particular, this means that $\alpha_k = \beta_k$ except that $\alpha_i = \beta_i -n $ and $\alpha_j = \beta_j + n$.) We use the module subcategory assumption to tensor the identity morphism of $M^\alpha$ by the (unique up to scalar) morphism $f: M^{(|\lambda|)} \to M^{(|\lambda|-n, n)}$ in $\mathcal{C}_{|\lambda|}$. Using Proposition \ref{enhanced_tensor_rule} to compute the tensor product, we find $M^{(|\lambda|)} \otimes M^{\alpha} = M^\alpha$, and $M^{(|\lambda|-n,n)} \otimes M^{\alpha}$ is a direct sum of $M^\gamma$, where the $\gamma$ are obtained from $\alpha$ by choosing nonnegative integers $\alpha_i^\prime$ summing to $n$, so that $\gamma = (\alpha_i-\alpha_i^\prime, \alpha_i^\prime) \in T_\lambda$. The component of $f \otimes Id_{M^\alpha}$ mapping to $M^\gamma$ is $\xi_r$, where $r$ is obtained from the matrix of $Id_{M^\alpha}$ by subtracting $\alpha_i^\prime$ from the $(i,i)$-entry, and adding a new row corresponding to $\alpha_i^\prime$ containing the entry $\alpha_i^\prime$ in column $i$ (otherwise zero). In particular, by choosing $\gamma$ such that all $\alpha_i^\prime$ are zero except for one; we obtain a morphism whose diagram has a single off-diagonal entry (which equals $n$). We are not yet done because the row of this entry is not arbitrary.
\newline \newline \noindent
Now we perform the ``transpose'' construction, tensoring the identity morphism of $M^\beta$ by the unique morphism $g: M^{(|\lambda|-n, n)} \to M^{(|\lambda|)}$. We obtain a summand $M^{\gamma^\prime}$ of $M^{(|\lambda|-n,n)} \otimes M^\beta$; $\gamma^\prime$

are obtained from $\beta$ by choosing nonnegative integers $\beta_i^\prime$ summing to $n$, so that $\gamma^\prime = (\beta_i-\beta_i^\prime, \beta_i^\prime) \in T_\lambda$. The component of $g \otimes Id_{M^\beta}$ mapping to $M^{\gamma^\prime}$ is $\xi_r$, where $s$ is obtained from the matrix of $Id_{M^\beta}$ by subtracting $\beta_i^\prime$ from the $(i,i)$-entry, and adding a new column corresponding to $\beta_i^\prime$ containing the entry $\beta_i^\prime$ in column $i$ (otherwise zero). We may project out the summand for which $\gamma=\gamma^\prime$, so we may form $\xi_s \xi_r : M^\alpha \to M^\beta$ (a morphism in $\mathcal{D}$), and observe that $\xi_s \xi_r = \xi_q$. Thus $\mathcal{D}$ contains a generating set of morphisms in $\mathcal{C}_\lambda$ and must therefore be all of $\mathcal{C}_\lambda$.

\end{proof}

\section{Interpretation using Lusztig's enveloping algebra $\dot{U}(\mathfrak{gl}_\infty)$}
\noindent
In this section, we explain how the categories $\mathcal{C}_\lambda$ may be described using a modified version of the universal enveloping algebra of $\mathfrak{gl}_\infty$.
\newline \newline \noindent
The following theorem of Doty and Giaquinto \cite{DG} sets the stage for our construction.
\begin{theorem}
Let $\Lambda(n,d)$ be the set of compositions of $n$ of length at most $d$, which we think of as a subset of the weight lattice of $\mathfrak{gl}_n$. The Schur algebra $S_{\mathbb{Q}}(n,d)$ is the associative algebra over $\mathbb{Q}$ generated by $1_\lambda$ for $\lambda \in \Lambda(n,d)$ and $e_i, f_i$ for $1 \leq i \leq n-1$, subject to the following relations.
\begin{eqnarray*}
1_\lambda 1_\mu &=& \delta_{\lambda, \mu}1_\lambda \\
\sum_{\lambda \in \Lambda(n,d)} 1_\lambda &=& 1 \\
e_i 1_\lambda &=& 
\begin{cases} 
      1_{\lambda+\alpha_i} e_i & \mbox{if }\lambda + \alpha_i \in \Lambda(n,d) \\
      0 & \mbox{otherwise} 
\end{cases} \\
f_i 1_\lambda &=& 
\begin{cases} 
      1_{\lambda-\alpha_i} f_i & \mbox{if }\lambda - \alpha_i \in \Lambda(n,d) \\
      0 & \mbox{otherwise} 
\end{cases} \\
1_\lambda e_i &=& 
\begin{cases} 
      e_i 1_{\lambda-\alpha_i} & \mbox{if }\lambda - \alpha_i \in \Lambda(n,d) \\
      0 & \mbox{otherwise} 
\end{cases} \\
1_\lambda f_i &=& 
\begin{cases} 
      f_i 1_{\lambda+\alpha_i} & \mbox{if }\lambda + \alpha_i \in \Lambda(n,d) \\
      0 & \mbox{otherwise} 
\end{cases} \\
{}[e_i, [e_i, e_{i \pm 1}]] &=& 0 \\
{}[f_i, [f_i, f_{i \pm 1}]] &=& 0 \\
{}[e_i,f_j] &=& \delta_{i,j}\sum_{\lambda \in \Lambda(n,d)}(\lambda_i - \lambda_{i+1}) 1_\lambda
\end{eqnarray*}
Here, $\alpha_i = e_i-e_{i+1}$ is the $i$-th simple root of $\mathfrak{gl}_n$. The integral Schur algebra (i.e. over $\mathbb{Z}$) is the subalgebra generated by the idempotents $1_\lambda$ and the divided powers $e_i^r / r!, f_i^r/r!$ ($r \in \mathbb{Z}_{\geq 0}$).
\newline \newline \noindent
Additionally, in the natural action of $S(n,d)$ on $(\mathbb{Q}^{\oplus d})^{\otimes n}$, $1_\lambda$ is projection onto the $\lambda$-weight space (which can be identified with $M^\lambda$).
\end{theorem}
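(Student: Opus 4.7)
The plan is to build on Schur--Weyl duality. The diagonal action of $U(\mathfrak{gl}_n)$ on $V^{\otimes d}$ (with $V = \mathbb{Q}^{\oplus d}$, where I would switch notation so $V$ is $n$-dimensional as in the theorem) commutes with the permutation action of $S_d$, and by the double centralizer theorem over $\mathbb{Q}$ this yields a surjection $\psi : U(\mathfrak{gl}_n) \twoheadrightarrow \End_{S_d}(V^{\otimes n}) = S_{\mathbb{Q}}(n,d)$. The Cartan subalgebra acts semisimply with eigenspaces indexed by $\lambda \in \Lambda(n,d)$, and the projector $1_\lambda$ onto the $\lambda$-weight space is a Lagrange-interpolation polynomial in the diagonal generators, so it lies in the image of $\psi$. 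The previous lemma already identifies the $\lambda$-weight space with $M^\lambda$, establishing the natural-action clause of the theorem.

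Next I would verify that the stated relations hold in $S_{\mathbb{Q}}(n,d)$. Orthogonality and completeness of $\{1_\lambda\}$ is the weight-space decomposition; the commutation $e_i 1_\lambda = 1_{\lambda+\alpha_i} e_i$ records that $e_i$ raises weight by $\alpha_i$, with the target being zero whenever $\lambda + \alpha_i \notin \Lambda(n,d)$; the Serre relations and $[e_i,f_j] = \delta_{ij} h_i$ are inherited from $U(\mathfrak{gl}_n)$; and writing $h_i = \sum_\lambda (\lambda_i - \lambda_{i+1}) 1_\lambda$ on the weight decomposition gives the bracket in the stated form. Thus $\psi$ factors through the algebra $A$ defined by the presentation, giving a surjection $\bar{\psi} : A \twoheadrightarrow S_{\mathbb{Q}}(n,d)$.

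The central task is to show $\bar{\psi}$ is injective. The natural route is to realize $A$ as a truncation of Lusztig's modified form $\dot{U}(\mathfrak{gl}_n)$: the same generators and relations with $\Lambda(n,d)$ replaced by the full weight lattice, then quotiented by the ideal generated by $1_\mu$ for $\mu \notin \Lambda(n,d)$. For $\dot{U}(\mathfrak{gl}_n)$ there is a PBW-type basis of triangular monomials $F \cdot 1_\mu \cdot E$, with $F$ (respectively $E$) an ordered monomial in divided powers of the $f_i$ (respectively $e_i$). Using the commutation relations together with the Serre relations, one shows that every element of $A$ is a $\mathbb{Q}$-linear combination of such triangular monomials with $\mu \in \Lambda(n,d)$ (and with the weights at each intermediate step forced to stay in $\Lambda(n,d)$, else the monomial vanishes). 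Then a dimension count against the $\xi_q$-basis of $S_{\mathbb{Q}}(n,d)$ from Proposition~\ref{schur_basis_theorem} --- whose cardinality is $\sum_{\alpha,\beta \in \Lambda(n,d)} |S_\alpha \backslash S_d / S_\beta|$ --- forces $\bar{\psi}$ to be an isomorphism, and an explicit matching of a triangular monomial $F \cdot 1_\lambda \cdot E$ with a $\xi_q$ (where $q$ is assembled from the exponents of the divided powers and the diagonal entries from $\lambda$) pins down the bijection.

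The main obstacle is that last linear independence / dimension matching step; checking the relations hold is routine, but ensuring that the Serre and $1_\lambda$-commutation relations alone suffice to reduce arbitrary words to the triangular spanning set requires a careful induction on the weight and length of a word. For the integral statement, I would finish by showing that the divided powers $e_i^{(r)} = e_i^r/r!$ and $f_i^{(r)} = f_i^r/r!$ preserve the $\mathbb{Z}$-lattice spanned by pure tensors of standard basis vectors (a direct computation from the action on weight basis elements), so the $\mathbb{Z}$-subalgebra they generate with the $1_\lambda$ lies inside $S_{\mathbb{Z}}(n,d)$; the reverse containment follows by matching the integral triangular monomials with the basis $\xi_q$ of Proposition~\ref{schur_basis_theorem}, using the explicit correspondence in which diagonal data of $q$ determines the $1_\lambda$ factor and off-diagonal entries determine the divided-power exponents.
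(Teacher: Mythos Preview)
The paper does not prove this theorem at all: it is stated as ``the following theorem of Doty and Giaquinto \cite{DG}'' and used as input, with no argument supplied. So there is no proof in the paper to compare your proposal against.

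That said, your outline is broadly the right shape and is in the spirit of the original Doty--Giaquinto argument: establish the surjection from $U(\mathfrak{gl}_n)$ via the action on $V^{\otimes d}$, check the relations, and then bound the size of the abstractly presented algebra $A$ by a triangular (PBW-style) spanning set matched against the known dimension of $S_{\mathbb{Q}}(n,d)$. You correctly identify the genuine content as the spanning/dimension step. One caution: getting the spanning set for $A$ down to exactly the right size is not just a matter of reducing to monomials $F\,1_\mu\,E$ with $\mu\in\Lambda(n,d)$; you also need to use the vanishing relations (the ``$0$ otherwise'' clauses) to kill monomials whose intermediate weights leave $\Lambda(n,d)$, and then argue that the surviving monomials are in bijection with the $\xi_q$. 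Your parenthetical hints at this, but the bijection between surviving triangular monomials and nonnegative-integer matrices with prescribed margins (equivalently, the double cosets indexing the $\xi_q$) is really the crux, and in Doty--Giaquinto it is handled with some care. The integral statement you sketch is also along the right lines, though again the hard direction is showing that every $\xi_q$ lies in the divided-power subalgebra, which amounts to an explicit straightening in the Kostant $\mathbb{Z}$-form.
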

\noindent
We construct an interpolated version of the above presentation that is suited to our purpose.
\begin{definition}
Consider the inclusion $\Lambda(n,d) \hookrightarrow \Lambda(n,d+1)$ by appending a part of size zero to a composition. In the above construction, this leads to a (non-unital) map $S(n,d) \to S(n,d+1)$, and we may define the limit of the resulting directed system:
\[
S(n, \infty) = \varinjlim S(n,d),
\]
noting that the result is a non-unital algebra.
\end{definition}
\noindent
This inherits an action on $E =(\mathbb{Q}^{\oplus \infty})^{\otimes n}$ because the maps respect the corresponding inclusions $(\mathbb{Q}^{\oplus d})^{\otimes n} \hookrightarrow (\mathbb{Q}^{\oplus (d+1)})^{\otimes n}$.
For any fixed composition $\lambda$, we define $1_\lambda$ via the obvious identifications for varying $d$. Then $1_\lambda E = M^\lambda$ as a representation of $S_n$. These statements also hold integrally.

\begin{lemma} \label{action_lemma}
The action of $e_k^m/m!$ on $M^\lambda \subseteq V^{\otimes r}$ is the same as $\xi_s$, where $s = (s_{ij})$ has $s_{ii} = \lambda_i$ for $i \neq k+1$, and all other entries equal to zero except for $s_{k,k+1}=m$, $s_{k+1,k+1} = \lambda_{k+1}-m$. (As usual, if some entry is negative, we take the associated map to be zero.) Similarly, the action of $f_k^m/m!$ is the same as $\xi_{q}$ where $q = (q_{ij})$ has $q_{ii} = \lambda_i$ for $i \neq k$, and all other entries equal to zero except for $q_{k+1,k} = m$ and $q_{kk} = \lambda_k - m$.
\end{lemma}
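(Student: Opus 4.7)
The plan is to verify the identity by applying both operators to a single chosen basis vector of $M^\lambda$. Since $M^\lambda$ is a cyclic $\mathbb{Q}S_d$-module and both $e_k^m/m!$ and $\xi_s$ are $S_d$-equivariant, it suffices to compare their values on the sorted pure tensor
\[
w = v_1^{\otimes \lambda_1} \otimes v_2^{\otimes \lambda_2} \otimes \cdots \otimes v_n^{\otimes \lambda_n} \in (V^{\otimes d})_\lambda = M^\lambda.
\]

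First, I compute the action of $e_k^m/m!$ directly. The element $e_k$ acts on $V^{\otimes d}$ via the iterated coproduct as $\sum_{j=1}^d E_{k,k+1}^{(j)}$, where $E_{k,k+1}^{(j)}$ denotes $E_{k,k+1}$ applied in the $j$-th tensor slot. These operators pairwise commute and each squares to zero on $V$, so the binomial expansion collapses to
\[
\frac{e_k^m}{m!} = \sum_{\substack{S \subseteq \{1,\ldots,d\} \\ |S|=m}} \prod_{j \in S} E_{k,k+1}^{(j)}.
\]
Applied to $w$, a summand is nonzero exactly when $S$ is contained in the $\lambda_{k+1}$ positions originally carrying $v_{k+1}$; in that case the operator relabels those $m$ positions from $k+1$ to $k$. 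Hence $\frac{e_k^m}{m!}(w)$ is the sum of $\binom{\lambda_{k+1}}{m}$ distinct pure tensors of weight $\lambda+m\alpha_k$, and is zero when $m > \lambda_{k+1}$.

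Next, I describe $\xi_s(w)$ using the combinatorial rule for the Schur algebra basis (implicit in Proposition \ref{schur_basis_theorem} and used throughout the proofs of Propositions \ref{composition_rule} and \ref{tensor_rule}). Thinking of a pure tensor in $M^\alpha$ as a labeling of $\{1,\ldots,d\}$ by basis indices with label $j$ occurring $\alpha_j$ times, $\xi_s$ sends this labeling to the sum over all ways of partitioning, for each source label $j$, the $\alpha_j$ positions carrying label $j$ into blocks of sizes $s_{1j},\ldots,s_{nj}$, and then relabeling the positions in block $i$ to $i$. For the $s$ in the statement, every column has at most one nonzero entry except column $k+1$, which forces a split of the $\lambda_{k+1}$ positions labelled $k+1$ into a size-$m$ subset (relabelled $k$) and a complementary size-$(\lambda_{k+1}-m)$ subset (unchanged). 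The resulting sum of $\binom{\lambda_{k+1}}{m}$ pure tensors coincides term by term with $\frac{e_k^m}{m!}(w)$. The analysis for $f_k^m/m!$ and $\xi_q$ is identical after interchanging the roles of $k$ and $k+1$ and using $E_{k+1,k}$.

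The only subtlety is the bookkeeping between the $S_\beta$-coset description in Proposition \ref{schur_basis_theorem} and the ``labelled-positions'' description used above; once the standard dictionary between weight basis vectors in $(V^{\otimes d})_\lambda$ and $S_\lambda$-cosets in $S_d$ is invoked, the match is immediate. Degeneracies match as well: if $m > \lambda_{k+1}$ then the matrix $s$ has a negative entry, so $\xi_s = 0$ by convention, and the empty sum over $m$-subsets of the positions labelled $k+1$ produces $0$ on the Lie side.
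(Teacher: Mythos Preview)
Your proof is correct and follows essentially the same line as the paper's: both compute the action of the divided power $e_k^m/m!$ on pure tensors via primitivity of $e_k$ (your sum over $m$-subsets is exactly this) and then identify the result with the combinatorial action of $\xi_s$ coming from Proposition~\ref{schur_basis_theorem}. The only difference is that you first reduce to a single cyclic generator $w$ using $S_d$-equivariance, whereas the paper simply compares the two operators on an arbitrary pure tensor directly; since the comparison is no harder for a general pure tensor than for $w$, the cyclicity step is an unnecessary (though harmless) detour.
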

\begin{proof}
The action of $e_k$ on $V$ is to replace the standard basis vector $v_{k+1}$ with the standard basis vector $v_k$. Because $e_k$ is primitive (in $U(\mathfrak{gl}_n)$, from which the Schur algebra action on $V^{\otimes r}$ is inherited), the action of the divided power $e_k^m/m!$ on a pure tensor $v_{j_1} \otimes v_{j_2} \otimes \cdots \otimes v_{j_r} \in V^{\otimes n}$ is to give the sum of all pure tensors $v_{j_1^\prime} \otimes v_{j_2^\prime} \otimes \cdots \otimes v_{j_r^\prime}$ where exactly $m$ indices that are equal to $k+1$ have been replaced by $k$ (and the others left unchanged); the factor of $m!$ accounds for the possible orderings of the affected indices. But that is exactly the action in Proposition \ref{schur_basis_theorem}. The proof for $f_k^m/m!$ is identical.
\end{proof}

\noindent
Using this result we can give an alternative description of the category $\mathcal{C}_\lambda$ analogously to the Serre presentation of $\mathfrak{gl}_n$. First we construct an interpolated version of the previous algebra presentation.

\begin{definition}
Let $\lambda = (\lambda_1, \lambda_2, \ldots, \lambda_l)$ be variables, and let us work over $\mathcal{R}_l \otimes_{\mathbb{Z}} \mathbb{Q} = \mathbb{Q}[\lambda_1, \lambda_2, \ldots, \lambda_l]$. We define $\mathfrak{U}_\lambda$ to be the quiver algebra with relations as follows. The vertices of the quiver are indexed by $T_\lambda$. For each ${\beta} \in T_\lambda$, there is an edge labelled $e_i$ from ${\beta}$ to ${\beta} + \alpha_i$ (defining the zero map unless ${\beta} + \alpha_i \in T_\lambda$) and an edge $f_i$ from ${\beta}$ to ${\beta} - \alpha_i$ (defining the zero map unless ${\beta} - \alpha_i \in T_\lambda$). We write $1_{{\beta}}$ for the idempotent associated to a vertex ${\beta}$, and the relations are
\begin{eqnarray*}
(e_if_j - f_je_i)1_{{\beta}} &=& \delta_{ij} ({\beta}_i - {\beta}_{i+1}) 1_{{\beta}} \\
(e_i^2 e_{i\pm 1} - 2 e_i e_{i\pm 1}e_i + e_{i \pm 1} e_i^2) 1_{{\beta}} &=& 0 \\
(f_i^2 f_{i\pm 1} - 2 f_i f_{i\pm 1}f_i + f_{i \pm 1} f_i^2) 1_{{\beta}} &=& 0,
\end{eqnarray*}
for any ${\beta} \in T_\lambda$. We also define $\mathfrak{U}_\lambda^\mathbb{Z}$ to be the $\mathcal{R}_l$-subalgebra of $\mathfrak{U}_\lambda$ generated by the divided power elements $e_i^{r}1_{{\beta}}/r!$ and $f_i^r 1_{{\beta}}/r!$.

\end{definition}
\noindent
Note that $\mathfrak{U}_\lambda$ does not contain elements $e_i$, $f_j$, or a multiplicative identity (these would lie in a certain completion). As long as at least one idempotent $1_\beta$ is contained in a monomial, we obtain a well defined element of $\mathfrak{U}_\lambda$.

\begin{remark}
The above definition is similar to Lusztig's quantum group $\dot{U}(\mathfrak{gl}_n)$ at $q=1$, except that it replaces $n$ with $\infty$, and has the variables $\lambda_i$ (which would be zero in the case of $\dot{U}(\mathfrak{gl}_n)$), as well as nonnegativity conditions on certain weights.
\end{remark}

%

%

\begin{definition}
Let $\lambda = (\lambda_1, \lambda_2, \ldots, \lambda_l)$ be a collection of variables. Let $\mathcal{C}_{\lambda}^{\mathfrak{gl},(0)}$ be the category with objects $M^{{\alpha}}$ indexed by ${\alpha} \in T_\lambda$. Define
\[
\Hom(M^{{\alpha}}, M^{{\beta}}) = 1_{{\beta}} \mathfrak{U}_\lambda^{\mathbb{Z}} 1_{{\alpha}}
\]
and define composition of morphisms by mutiplication in $\mathfrak{U}_\lambda^{\mathbb{Z}}$. Let $\mathcal{C}_\lambda^{\mathfrak{gl}}$ be the Karoubian envelope of this category.
\end{definition}

\begin{proposition}
Fix a composition $\mu$ of $n$. There is a specialisation functor $F_\mu:\mathcal{C}_\lambda \to S_n$-mod, which evaluates $\lambda$ at $\mu$.
\end{proposition}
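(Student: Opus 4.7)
The plan is to define the functor first on the pre-Karoubian category $\mathcal{C}_\lambda^{\mathfrak{gl},(0)}$ by transporting the presentation of $\mathfrak{U}_\lambda^{\mathbb{Z}}$ to the algebra $S(|\mu|, \infty)$ via evaluation $\lambda \mapsto \mu$, and then pass to Karoubian envelopes. On objects, I would send $M^\alpha$ to the permutation module $M^{\phi_\mu(\alpha)}$ of $S_n$ when $\phi_\mu(\alpha)$ has no negative entries, and to the zero module otherwise. On morphisms, an element of $\Hom(M^\alpha, M^\beta) = 1_\beta \mathfrak{U}_\lambda^{\mathbb{Z}} 1_\alpha$ is an $\mathcal{R}_l$-linear combination of monomials in the divided powers $e_i^r/r!$, $f_j^s/s!$ and idempotents $1_\gamma$; I would send it to the corresponding expression in $S(|\mu|, \infty)$ with scalars evaluated by $\phi_\mu$, interpreting $1_{\gamma}$ as the projection onto the weight space $M^{\phi_\mu(\gamma)}$ (which is zero when $\phi_\mu(\gamma)$ has a negative entry).

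The verification that this respects composition reduces to checking that the defining relations of $\mathfrak{U}_\lambda^{\mathbb{Z}}$ hold in $S(|\mu|, \infty)$ after specialisation. The Serre relations and the idempotent multiplication rules are independent of $\lambda$ and hold in $S(|\mu|, \infty)$ by the Doty–Giaquinto presentation. The only relation involving $\lambda$ is the Chevalley relation $(e_i f_j - f_j e_i) 1_\beta = \delta_{ij}(\beta_i - \beta_{i+1}) 1_\beta$, and under $\phi_\mu$ this becomes $\delta_{ij}(\phi_\mu(\beta)_i - \phi_\mu(\beta)_{i+1}) 1_{\phi_\mu(\beta)}$, which is exactly the Doty–Giaquinto relation in $S(|\mu|, \infty)$ (with the convention that $1_{\phi_\mu(\beta)} = 0$ if $\phi_\mu(\beta)$ is not a nonnegative composition). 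Lemma \ref{action_lemma} then guarantees that the image of $1_\beta \mathfrak{U}_\lambda^{\mathbb{Z}} 1_\alpha$ in $S(|\mu|, \infty)$ indeed lands in $\Hom_{S_n}(M^{\phi_\mu(\alpha)}, M^{\phi_\mu(\beta)})$, since these weight spaces of $V^{\otimes n}$ are exactly the required permutation modules.

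Passing to Karoubian envelopes is automatic: $S_n$-mod is abelian, hence Karoubian, so the universal property of the Karoubian envelope produces the functor $F_\mu : \mathcal{C}_\lambda^{\mathfrak{gl}} \to S_n\text{-mod}$. The only step that requires any care is the specialisation of the Chevalley relation; once that is in place, everything else is a direct translation from the integral Doty–Giaquinto presentation. No Zariski-density argument is needed here, because the relations literally specialise.
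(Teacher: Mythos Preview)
Your proposal is correct and follows essentially the same approach as the paper: define $F_\mu$ on objects by $M^\alpha \mapsto M^{\phi_\mu(\alpha)}$ (or zero), and on morphisms via the algebra map $\mathfrak{U}_\lambda^{\mathbb{Z}} \to S(n,\infty)$ sending each generator to the generator of the same name. The paper's proof is terser---it simply asserts that $S(n,\infty)$ is obtained from $\mathfrak{U}_\lambda^{\mathbb{Z}}$ by killing the idempotents $1_\beta$ with $\phi_\mu(\beta)$ not a composition---whereas you spell out the verification relation by relation; both amount to the same argument.
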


\begin{proof}
Let $\phi_\mu$ be the map which evaluates $\lambda$ at $\mu$. We take $F_\mu(M^\alpha) = M^{\phi_\mu(\alpha)}$ (where, as usual, if $\phi_\mu(\alpha)$ has a negative entry, then $M^{\phi_\mu(\alpha)} = 0$), and on morphisms, $F_\mu$ is defined via the map $\mathfrak{U}_\lambda^{\mathbb{Z}} \to S(n, \infty)$ defined by mapping each generator to the one denoted by the same symbol. This is well defined because $S(n,\infty)$ is obtained from $\mathfrak{U}_\lambda^{\mathbb{Z}}$ by quotienting out all $1_\beta$ for which $\phi_\mu(\beta)$ contains a negative entry.
\end{proof}


\begin{theorem} \label{gl_schur_equivalence}
There is an equivalence of categories $F: \mathcal{C}_\lambda^{\mathfrak{gl}} \to \mathcal{C}_\lambda$. This equivalence respects the specialisation functors $F_\mu$ defined on each category.
\end{theorem}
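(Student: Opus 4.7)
The plan is to construct $F$ explicitly on generators, verify relations by Zariski-density, and then check it is fully faithful using a combination of an inductive generation argument (for fullness) and another specialisation argument (for faithfulness).

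First I would define $F$ on objects by $F(M^{\alpha}) = M^{\alpha}$ (the categories share the same indexing set $T_\lambda$), and on the generating morphisms of $\mathcal{C}_\lambda^{\mathfrak{gl},(0)}$ by sending each $e_k^r \cdot 1_{\alpha}/r!$ and $f_k^r \cdot 1_{\alpha}/r!$ to the unique $\xi_s$ described by (the evident extension from $\lambda$ to $\alpha \in T_\lambda$ of) Lemma \ref{action_lemma}; that is, to the map corresponding to the matrix having $\alpha$ on the diagonal except that one diagonal entry is reduced by $r$, with the deficit placed in one adjacent off-diagonal slot. To show $F$ descends to a functor from $\mathcal{C}_\lambda^{\mathfrak{gl}}$, I would verify that each defining relation of $\mathfrak{U}_\lambda^{\mathbb{Z}}$ (the Serre relations and $[e_i,f_j]1_\beta = \delta_{ij}(\beta_i-\beta_{i+1})1_\beta$) maps to the zero morphism in $\mathcal{C}_\lambda$. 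Each such relation is an $\mathcal{R}_l$-linear combination of $\xi_q$ whose coefficients are polynomials in the variables $\lambda_i$; by the exact Zariski-density argument used throughout (as in Proposition \ref{enhanced_properties}), it suffices to check the identity after every specialisation $\phi_\mu$ with $\mu_i \gg 0$, where it becomes the corresponding relation in $S_{\mathbb{Q}}(|\mu|,\infty)$, which holds by the Doty--Giaquinto theorem. Compatibility with $F_\mu$ will be automatic from this construction, since the definitions of $F_\mu$ on both sides are given by the same specialisation $\phi_\mu$ at the level of generators.

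Next I would show $F$ is full. The Karoubian envelope on the target side introduces no new obstructions beyond showing that every $\xi_q \in \Hom_{\mathcal{C}_\lambda^{(0)}}(M^{\alpha},M^{\beta})$ lies in the image of $F$. I would induct on the total sum of off-diagonal entries of $q$. The base case is $q$ diagonal, where $\xi_q = 1_{\alpha}$ is in the image. For the induction step, pick any off-diagonal entry $q_{ij} > 0$ (WLOG $i<j$; the other case is symmetric). Let $q'$ be obtained from $q$ by subtracting one from $q_{ij}$ and adding one to the diagonal entry $q_{jj}$, and let $s$ be the elementary matrix corresponding to $f_{j-1}\cdots f_i \cdot 1_{\alpha'}$ (a composition of single-step lowering operators). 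Using the composition rule in Definition \ref{enhanced_composition_rule}, $\xi_{s}\xi_{q'}$ equals a nonzero integer multiple of $\xi_q$ plus $\xi_{q''}$ terms with strictly smaller off-diagonal sum. Solving for $\xi_q$ and invoking the inductive hypothesis on the $\xi_{q''}$ as well as on $\xi_{q'}$ puts $\xi_q$ in the image of $F$; this is essentially an interpolation of one direction of Doty--Giaquinto's generation result.

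Finally I would show $F$ is faithful. A morphism in $1_\beta \mathfrak{U}_\lambda^{\mathbb{Z}} 1_\alpha$ that is mapped to zero in $\mathcal{C}_\lambda$ is, by the previous paragraph and the basis property, some $\mathcal{R}_l$-linear combination of monomial preimages of the $\xi_q$ basis whose image coefficients all vanish. Specialising via $\phi_\mu$ for $\mu$ in a Zariski-dense set reduces the statement to faithfulness of the Doty--Giaquinto presentation of $S_{\mathbb{Q}}(|\mu|,\infty)$, so each polynomial coefficient vanishes at Zariski-densely many points and is therefore zero. Combining essential surjectivity (immediate on objects, preserved by the shared Karoubian envelope) with fullness and faithfulness yields the equivalence, and the specialisation compatibility is built in.

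The main obstacle I anticipate is the inductive fullness step: one must be careful that the composition formula in Definition \ref{enhanced_composition_rule}, when applied to an elementary $\xi_s$ from the $e_i$/$f_i$ family and a general $\xi_{q'}$, really does produce $\xi_q$ with a nonzero leading coefficient (in particular, one that does not vanish identically in $\mathcal{R}_l$) and error terms genuinely lower in the off-diagonal-sum order. This requires a careful bookkeeping of which $3$-tensors $A = (a_{ijk})$ contribute, and the choice of which off-diagonal entry to "build up" at each step must be made so as to avoid cancellations; handling the entries in the first $l$ rows and columns (where diagonals lie in $\lambda_i + \mathbb{Z}$ rather than $\mathbb{Z}_{\geq 0}$) is where the argument is most delicate.
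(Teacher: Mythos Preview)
Your construction of $F$ on generators and your plan to verify the defining relations by Zariski-density are fine; the paper instead checks the Chevalley relation directly via Definition \ref{enhanced_composition_rule} (the case $i=j$ being Example \ref{chevalley_reln}) and the Serre relations by an explicit matrix computation, but your shortcut is equally valid. Your fullness argument by induction on the sum of off-diagonal entries is also close in spirit to the paper's, though the paper organises it differently: it introduces the root vectors $E_{ij}$, builds every upper-triangular $\xi_q$ as an ordered product of their divided powers (so that the leading coefficient is $1$, not merely a nonzero integer), does the same for lower-triangular $q$, and then writes a general $\xi_q$ as $\xi_{q^{(-)}}\xi_{q^{(+)}}$ plus lower terms. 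Your version has a small confusion (to pass from $\beta'$ back to $\beta$ when $i<j$ you need a product of $e$'s, not $f$'s), and as written it only yields $\xi_q$ up to an integer multiple, so it does not give the integral statement over $\mathcal{R}_l$.

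The genuine gap is in faithfulness. Your argument assumes that an element of $1_\beta \mathfrak{U}_\lambda 1_\alpha$ which specialises to zero under every $\phi_\mu$ must itself be zero. But $\mathfrak{U}_\lambda$ is defined by generators and relations, so its hom-spaces are quotients of free modules, not a priori free $\mathcal{R}_l$-modules whose elements are detected by evaluating polynomial coefficients; the Zariski-density trick that works throughout the paper applies to $\mathcal{C}_\lambda$ precisely because its hom-spaces were \emph{defined} with an explicit $\mathcal{R}_l$-basis. Invoking ``faithfulness of the Doty--Giaquinto presentation of $S_{\mathbb{Q}}(|\mu|,\infty)$'' does not close this, because the specialised $\mathfrak{U}_\lambda$ is not that algebra: it retains idempotents $1_\beta$ for all $\beta\in T_\lambda$, including those with $\phi_\mu(\beta)$ having negative entries, and these are annihilated by the map to $S(|\mu|,\infty)$. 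Thus each specialisation map on the $\mathfrak{U}_\lambda$ side already has a kernel, and you have not excluded that this kernel meets $1_\beta \mathfrak{U}_\lambda 1_\alpha$. The paper avoids this entirely by a dimension count: working over $\mathbb{Q}$, it uses the PBW theorem to get a spanning set $1_\alpha \prod_{i\neq j} E_{ij}^{q_{ij}} 1_\beta$, shows that any such monomial with $\sum_{j\neq i_0} q_{i_0 j} > \beta_{i_0}$ for some $i_0$ vanishes (by reordering so the offending factors act first and land outside $T_\lambda$), and observes that the surviving monomials are in bijection with $T_\lambda(\alpha,\beta)$. It then introduces compatible filtrations (PBW degree on the source, off-diagonal sum on the target) and concludes by comparing dimensions of the associated graded pieces. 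You need an argument of this kind, bounding the size of $1_\beta \mathfrak{U}_\lambda 1_\alpha$ independently of $F$, to make faithfulness go through.
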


\begin{proof}
We work over $\mathbb{Q}$, and observe that our construction applies to the integral versions of these categories. We define the functor $F$ on the ``scaffold'' versions of each category, i.e. $\mathcal{C}_\lambda^{(0)}$ and $\mathcal{C}_\lambda^{\mathfrak{gl},(0)}$, and then pass to the Karoubian envelope. We let $F(M^{{\beta}}) = M^{{\beta}}$, and let $F(e_i 1_{{\beta}}) = \xi_{{s}}$ where ${s}$ is the matrix $s$ given in Lemma \ref{action_lemma}. We similarly define $F(f_i1_{{\beta}}) = \xi_{{q}}$ (again from Lemma \ref{action_lemma}) and $F(1_{{\beta}}) = \xi_{{r}}$, where $r_{ij} = \delta_{ij}\beta_i$. Because the algebra $\mathfrak{U}_\lambda$ is generated by these elements, the category $\mathcal{C}_\lambda^{\mathfrak{gl},(0)}$ is generated by these morphisms. To check that this actually defines a functor, we must show that $F$ respects the relations between the generators.
\newline \newline \noindent
First we check the Chevalley relations (commutation relation between $e_i$ and $f_j$). Suppose that $m \neq n$, and let us calculate $e_m f_n 1_{{\beta}}$. We consider matrices $A = (a_{ijk})$ as per Definition \ref{enhanced_composition_rule}. The only off-diagonal entry of ${s}$ is ${s}_{m,m+1} = 1$, and the only off-diagonal entry of ${q}$ is ${q}_{n+1,n} = 1$. By the property that $\sum_k a_{ijk} = s_{ij}$, for $i \neq m$, $a_{ijk}$ can only be nonzero when $i=j$. Similarly for $k \neq n$, $a_{ijk}$ can only be nonzero when $j=k$. So except for these two cases $a_{ijk} = \delta_{ij}\delta_{jk}{\beta}_j$. This leaves us to determine the cases where $i=m$ (and $j=m$ or $j=m+1$), as well as those where $k=n$ (and $j=n$ or $j=n+1$). This amounts to $(i,j,k) \in \{ (m,m,m), (m,m+1,m+1), (n+1,n+1,n),(n,n,n)\}$ (all other remaining cases automatically being zero because of the condition that $m \neq n$). But the sum conditions now give $a_{m,m+1,m+1} = 1$, $a_{m,m+1,m} = {\beta}_m-1$, as well as $a_{n+1,n+1,n} = 1$, $a_{n,n+1,n} = {\beta}_n-1$. Thus we obtain a single $\xi_{{\gamma}}$, where $\gamma_{ij} = \delta_{ij}{\beta}_i - \delta_{im}\delta_{jm}-\delta_{in}\delta_{jn} + \delta_{im}\delta_{j,m+1} + \delta_{i,n+1}\delta_{jn}$. A similar calculation shows that $f_ne_m1_{{\beta}}$ gives the same result. The case $m=n$ is Example \ref{chevalley_reln}.\
\newline \newline \noindent
It remains to check the Serre relations; we only check $(e_i^2 e_{i+1} - 2e_i e_{i+1} e_i + e_{i+1}e_i^2)1_{{\beta}} = 0$ for $i=1$, the other cases being analogous. This reduces verifying the identity to a calculation involving the first $3$ parts of ${\beta}$, so we assume that ${\beta} = (\beta_1, \beta_2, \beta_3)$ has only $3$ parts. Let
\[
p^{(1)} = \left( \begin{array}{ccc}
\beta_1 & 0 & 1 \\
0 & \beta_2 & 0 \\
0 & 0 & \beta_3 - 1
\end{array} \right), \hspace{10mm}
p^{(2)} = \left( \begin{array}{ccc}
\beta_1 & 1 & 0 \\
0 & \beta_2 - 1 & 1 \\
0 & 0 & \beta_3 - 1
\end{array} \right), \hspace{10mm}
p^{(3)} = \left( \begin{array}{ccc}
\beta_1 & 2 & 0 \\
0 & \beta_2 - 2 & 0 \\
0 & 0 & \beta_3
\end{array} \right)
\]
Then, applying Definition \ref{enhanced_composition_rule} gives 
\begin{eqnarray*}
F(e_{i+1}1_{\beta + \alpha_i}) F(e_{i}1_\beta) &=& 
\xi_{p^{(2)}} \\
F(e_{i}1_{\beta + \alpha_{i+1}}) F(e_{i+1}1_\beta) &=&
\xi_{p^{(1)}} + \xi_{p^{(2)}} \\
F(e_{i}1_{\beta + \alpha_i}) F(e_{i}1_\beta) &=& 2 \xi_{p^{(3)}}.
\end{eqnarray*}
Now we complete the calculation. Let
\[
r = \left( \begin{array}{ccc}
\beta_1 & 1 & 1 \\
0 & \beta_2 - 1 & 0 \\
0 & 0 & \beta_3 - 1
\end{array} \right), \hspace{10mm}
s = \left( \begin{array}{ccc}
\beta_1 & 2 & 0 \\
0 & \beta_2 - 2 & 1 \\
0 & 0 & \beta_3 - 1
\end{array} \right).
\]
\noindent
Then,
\begin{eqnarray*}
F(e_{i}1_{\beta + \alpha_i + \alpha_{i+1}})
\xi_{p^{(1)}} = \xi_{r} \\
F(e_{i}1_{\beta + \alpha_i + \alpha_{i+1}})
\xi_{p^{(2)}} = \xi_{r} + 2 \xi_{s}\\
F(e_{i+1}1_{\beta + 2\alpha_i})
\xi_{p^{(3)}} = \xi_{s}.
\end{eqnarray*}
\noindent
Combining these equations gives $F(e_i^2 e_{i+1}1_{{\beta}}) = 2 \xi_{{r}} + 2\xi_{{s}}$ and $F(e_{i+1}e_i^2 1_{{\beta}}) = 2 \xi_{{s}}$, while $F(e_ie_{i+1}e_i 1_{{\beta}}) =  \xi_{{r}} + 2 \xi_{{s}}$. Thus the Serre relations follow.
\newline \newline \noindent
To show $F$ is an equivalence, we must show it is fully faithful. We prove the ``full'' part by showing that the $F(e_i1_\beta)$ and $F(f_j1_\beta)$ generate $\mathcal{C}_\lambda^{(0)}$. We prove the faithful part by a dimension count.
\newline \newline \noindent
Firstly, let us consider the elements $E_{ij}$ defined by $E_{ij} = [\ldots[[e_i, e_{i+1}],e_{i+2}], \ldots, e_{j-1}]$ for $i<j$, and $E_{ij} = [\ldots[[f_{i-1}, f_{i-2}],f_{i-3}], \ldots, f_{j}]$ for $i>j$; these are the elementary matrices in the usual definition of $\mathfrak{gl}_n$. Then, if we define $E_{kk}1_\beta = \beta_k$, we have $[E_{ij},E_{kl}]1_\beta = \delta_{jk}E_{il}1_\beta-\delta_{li}E_{kj}1_\beta$. It then follows from Lemma  \ref{action_lemma} that $F(E_{ij}1_\beta) = \xi_q$, where $q$ has a single off-diagonal entry at coordinates $(i,j)$, or equivalently, $q_{ij} = \beta_i \delta_{ij} - \delta_{jj} + \delta_{ij}$. We show that any $\xi_q$ lies in the subalgebra generated by the $F(E_{ij}1_\beta)$. Let $S$ be the set of $q$ for which this is the case, and note that we obtain any $\xi_q$ where $q$ has a single off-diagonal entry by Lemma \ref{action_lemma}, by using a divided power of a suitable $E_{ij}$. We now show that every upper-triangular $q$ may be obtained by taking a product of such elements.
\newline \newline \noindent
Consider 
\[
\prod_{i<j} \frac{E_{ij}^{q_{ij}}}{q_{ij}!}1_\beta,
\]
where the terms in the product are ordered so that the value of $j$ decreases from left to right, and the order among terms with equal $j$ is arbitrary. One checks that the result is $\xi_p$, where $p$ has entries below the diagonal that are equal to zero, and $p_{ij} = q_{ij}$ for $i<j$ (i.e. above the diagonal). One way to see this is to remember that the action of $E_{ij}$ is to replace vectors $v_j$ with $v_i$ in a tensor product $V^{\otimes d}$. When considering the product above, if an $E_{ij}$ occurs to the left of $E_{jk}$, then the composite action might turn a vector $v_k$ into $v_j$ and then $v_i$. With our choice of ordering of factors, this cannot happen. This means the number of $v_j$ turned into $v_i$ is exactly $q_{ij}$. However, that is exactly the action of $\xi_q$.
\newline \newline \noindent
Thus $S$ contains all upper-triangular $q$, and similarly all lower-triangular $q$. It now remains to check that this implies all $q$ are in $S$. Let $q^{(+)}$ be the element constructed above whose entries agree with $q$ above the diagonal (and are zero below), and $q^{(-)}$ be the analogous element for entries below diagonal. Now we observe that $\xi_{q^{(-)}} \xi_{q^{(+)}} = \xi_q + \ldots$, where the omitted terms have a smaller sum of off-diagonal entries than $q$ (this follows from the discussion of the filtration $\mathcal{F}_n^\prime$ in the next paragraph because all binomial coefficients in the associated graded computation are equal to $1$). Thus by induction on the sum of off-diagonal entries of $q$, $S$ consists of all possible $q$, and hence $F$ is full, and this holds for the integral version of the category.
\newline \newline \noindent
By the PBW theorem, elements of hom-spaces of $\mathcal{C}_\lambda^{\mathfrak{gl},(0)}$ are a linear combination of terms of the form
\[
1_\alpha \prod_{i \neq j} E_{ij}^{q_{ij}} 1_\beta,
\]
where the choice of ordering in the product is not important (we may even choose different orderings for different elements). We claim that if there is an $i_0$ such that $\sum_{j \neq i_0} q_{i_0 j} > \beta_{i_0}$, there is a choice of ordering of terms in the product that makes the monomial zero (and so it may be omitted from a spanning set). We choose all monomials $E_{i_0 j}^{q_{i_0 j}}$ to appear at the right of the product. Now we note that the product of this subset of monomials,
\[
1_{\alpha^\prime} \prod_{j \neq i_0}E_{i_0 j}^{q_{i_0 j}} 1_\beta,
\]
must be zero, because calculating weights gives $\alpha_{i_0}^\prime = \beta_{i_0} - \sum_{j \neq i_0} q_{i_0 j} < 0$, hence $\alpha \notin T_\lambda$. To deduce the fact that $F$ is faithful from this, we consider the following filtrations.
\newline \newline \noindent
Let $\mathcal{F}_n$ ($n \in \mathbb{Z}_{\geq 0}$) be the PBW filtration on $1_{\beta}\mathfrak{U}1_\alpha$ (each $E_{ij}$ for $i \neq j$ lies in filtration degree 1). Let $\mathcal{F}_n^\prime$ be the filtration on hom-spaces of $\mathcal{C}_\lambda^{(0)}$ induced by declaring that $\xi_q$ should have filtration degree $\sum_{i \neq j}q_{ij}$. Let us check that this is indeed a filtration. Consider the product of $\xi_r$ and $\xi_s$; let $A_{ijk}$ be such that $\sum_{i} A_{ijk} = s_{jk}$ and $\sum_k A_{ijk} = r_{ij}$. We obtain a sum of $\xi_q$, where $q_{ik} = \sum_j A_{ijk}$. The filtration degree is maximised when $A_{ijk}$ has the largest sum of entries with $i \neq k$ as possible. This in turn is achieved by taking $A_{iik} = s_{ik}$ and $A_{ikk} = r_{ik}$, $A_{jjj} = \lambda_j - \sum_{i \neq j}r_{ij} - \sum_{k\neq j} s_{jk}$ and all other entries zero. This is because for fixed $j$, the sum of off-diagonal entries of $A_{ijk}$ is bounded by $\sum_{i} r_{ij} + \sum_{k} s_{jk}$; the first term covers all rows except the $j$-th, and the second covers all columns except the $j$-th (together this covers all entries but the $j$-th diagonal). Our construction attains this bound (in fact, it is unique, because the bound double-counts entries not in the $j$-th row or column, so any configuration with a nonzero entry outside of these would be strictly less than this bound). This also shows that in the associated graded algebra, 
\[
\xi_r \xi_s = \prod_{i \neq k} \frac{(r_{ik}+s_{ik})!}{r_{ik}!s_{ik}!} \xi_q,
\]
where $q_{ik} = r_{ik} + s_{ik}$ for $i \neq k$.
\newline \newline \noindent
The functor $F$ sends $\mathcal{F}_n$ to $\mathcal{F}_n^\prime$ by construction (the $n$-th divided power of $e_i$ or $f_j$ corresponds to a single off-diagonal entry equal to $n$). However, we know that $\mathcal{F}_n^\prime$ is spanned by $q$ which index a basis of $\mathcal{F}_n$. Thus the dimension of the former spaces is less than or equal to that of the latter space, but the map is a surjection upon taking the associated graded spaces. Hence $F$ must be injective.
\newline \newline \noindent
The fact that $F$ respects the specialisation functors follows immediately from Lemma \ref{action_lemma}. The equivalence $F: \mathcal{C}_\lambda^{\mathfrak{gl}} \to \mathcal{C}_\lambda$ holds integrally (i.e. over $\mathcal{R}_l$). This is because the proof constructed all $\xi_q$ using divided powers of $E_{ij}$.


\end{proof}

\begin{proposition} \label{lie_alg_action}
Suppose $l(\lambda)=1$. The tensor structure on $\mathcal{C}_{\lambda}^{\mathfrak{gl}}$ obtained from $\mathcal{C}_{\lambda}$ via $F$ is as follows. Define $E_{ij} = [\ldots[[e_i, e_{i+1}],e_{i+2}], \ldots, e_{j-1}]$ for $i<j$, $E_{ij} = [\ldots[[f_{i-1}, f_{i-2}],f_{i-3}], \ldots, f_{j}]$ for $i>j$. On the level of objects,
\[
M^{{\alpha}} \otimes M^{{\beta}} = \bigoplus_{{\gamma} \in T_\lambda({\alpha}, {\beta})} M^{{\gamma}}.
\]
For morphisms, 
\[
E_{i,j}1_{{\alpha}} \otimes 1_{{\beta}} = \bigoplus_{{\gamma} \in T_\lambda(\alpha, \beta)} \sum_{k} E_{(i,k),(j,k)}1_{{\gamma}},
\]
where the index $(i,k)$ (as well as $(j,k)$) indicates the coordinate corresponding to $\gamma_{ij}$ (because $\gamma_{ij}$ is viewed as a vector rather than a matrix). An analogous formula holds for the second tensor factor.
\end{proposition}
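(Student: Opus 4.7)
The plan is to verify the formula by transporting everything through the equivalence $F : \mathcal{C}_\lambda^{\mathfrak{gl}} \to \mathcal{C}_\lambda$ of Theorem \ref{gl_schur_equivalence} and applying the tensor product rule of Proposition \ref{enhanced_tensor_rule}. Since $F$ is the identity on objects, the decomposition $M^\alpha \otimes M^\beta = \bigoplus_{\gamma \in T_\lambda(\alpha,\beta)} M^\gamma$ is immediate from Proposition \ref{enhanced_tensor_rule}. So the real content is the claim about the action of $E_{ij}1_\alpha \otimes 1_\beta$ on each summand.

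I would first identify $F(E_{ij}1_\alpha) = \xi_{q^{(1)}}$, where $q^{(1)}$ is the matrix whose diagonal absorbs the weight shift and which has a single off-diagonal entry equal to $1$ at position $(i,j)$ (established in the ``full'' portion of the proof of Theorem \ref{gl_schur_equivalence}); similarly $F(1_\beta) = \xi_{q^{(2)}}$ with $q^{(2)} = \operatorname{diag}(\beta)$. Next I would apply Proposition \ref{enhanced_tensor_rule} to compute the restriction of $\xi_{q^{(1)}} \otimes \xi_{q^{(2)}}$ to a summand $M^\gamma \to M^{\gamma'}$ as a sum of $\xi_T$ indexed by $4$-tensors $T_{abcd}$ subject to the listed constraints. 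Because $q^{(2)}$ is diagonal, the constraint $\sum_{ac} T_{abcd} = q^{(2)}_{bd}$ forces $T_{abcd} = 0$ unless $b = d$, so $T$ collapses to a $3$-tensor $S_{ack} := T_{akck}$. The remaining constraints then read: each slice $S_{\cdot,\cdot,k}$ is a matrix whose row and column sums are the $k$-th columns of $\gamma'$ and $\gamma$ respectively, and summing the slices over $k$ recovers $q^{(1)}$.

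Since $q^{(1)}$ has exactly one off-diagonal entry (equal to $1$, at position $(i,j)$), precisely one slice $S_{\cdot,\cdot,k_0}$ can carry an off-diagonal entry of $1$ at $(i,j)$, forcing every other slice to be diagonal; for each admissible $k_0$ the entire tensor $T$ is uniquely determined by $\gamma$. Each resulting $\xi_T$ should then be matched against $F(E_{(i,k_0),(j,k_0)}1_\gamma)$ by inspection: the proof of Theorem \ref{gl_schur_equivalence} tells us that $F$ sends $E_{ab}1_\gamma$ to the $\xi$-morphism whose double coset matrix is diagonal except for a single off-diagonal $1$ at $(a,b)$ in the linear ordering of coordinates of $\gamma$ viewed as a vector, and reshaping $T$ into such a matrix produces exactly this configuration. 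Summing over $k_0$ (and invoking faithfulness of $F$) yields the claimed formula, with the analogous formula for the second tensor factor following by the same argument with the roles of $q^{(1)}$ and $q^{(2)}$ swapped.

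The main obstacle I expect is the index bookkeeping: reshaping the $4$-tensor $T$ into a matrix indexed by pairs and aligning those pairs with the labelling used in $E_{(i,k),(j,k)}$ requires careful attention to the convention distinguishing rows from columns of $\gamma$ (i.e.\ whether a given pair corresponds to a column sum constraint of $\alpha$ or a row sum constraint of $\beta$). Once these conventions are fixed consistently, the identification of each $\xi_T$ with the corresponding $F(E_{(i,k),(j,k)}1_\gamma)$ is essentially automatic from Lemma \ref{action_lemma} and the generation statement in Theorem \ref{gl_schur_equivalence}.
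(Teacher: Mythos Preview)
Your approach is correct but differs from the paper's. The paper gives a one-line argument: verify the identity after applying the specialisation functors $F_d$ to $S_d$-mod, where $E_{ij}$ acts on $V^{\otimes d}$ by replacing a tensor factor $v_j$ with $v_i$; the formula then holds pointwise for every $d$, and the usual Zariski-density argument (as in Proposition \ref{enhanced_properties}) lifts it back to $\mathcal{C}_\lambda$. You instead stay entirely inside the interpolating category and compute $\xi_{q^{(1)}} \otimes \xi_{q^{(2)}}$ directly from the $4$-tensor rule of Proposition \ref{enhanced_tensor_rule}, collapsing it to a $3$-tensor via the diagonality of $q^{(2)}$ and then reading off each $\xi_T$ as $F(E_{(i,k_0),(j,k_0)}1_\gamma)$. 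Your route is longer and requires the index bookkeeping you flag, but it is more explicit and does not invoke specialisation at all; the paper's route is terser and reuses the same density trick that pervades the section. Both are valid, and your computation in fact makes transparent why no multinomial coefficients appear (the tensor rule is coefficient-free and each admissible $T$ is uniquely determined by the choice of $k_0$).
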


\begin{proof}
It suffices to notice that this formula holds under the specialisation functors to $S_d$, where $E_{ij}$ may be interpreted as acting on $V^{\otimes d}$ by replacing a pure tensor with the sum of pure tensors obtained by replacing a tensor factor $v_j$ with $v_i$.
\end{proof}
\noindent
We conclude this section by explicitly comparing the two realisations of $\mathcal{C}_\lambda$ in the case where $l(\lambda)=2$.
\begin{proposition} \label{chevalley_to_schur}
Let $l(\lambda) = 2$, and let 
\[
q(m,n) = \left( \begin{array}{ccc}
\lambda_1 -n & m \\
n & \lambda_2 - m
\end{array} \right).
\]
Then we have the equality of generating functions
\[
\sum_{m,n} x^m y^n \xi_{q(m,n)} = \exp(yf) (1-xy)^{h_2} \exp(xe)1_\lambda,
\]
where $e, f$ are Chevalley generators and $h_2$ satisfies $h_2 1_\alpha = \alpha_2 1_\alpha$.
\end{proposition}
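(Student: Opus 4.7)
The plan is to push the identity through the equivalence $F:\mathcal{C}_\lambda^{\mathfrak{gl}}\to\mathcal{C}_\lambda$ of Theorem \ref{gl_schur_equivalence} and verify it on the $\xi_q$ side, where everything collapses to a finite combinatorial check plus a single binomial series identity.

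First I would unpack the right hand side factor by factor. By Lemma \ref{action_lemma}, the divided powers $e^m/m!$ and $f^n/n!$ correspond to $\xi_{s(m)}$ and $\xi_{t(n,\beta)}$, where
\[
s(m)=\begin{pmatrix}\lambda_1 & m\\ 0 & \lambda_2-m\end{pmatrix},\qquad t(n,\beta)=\begin{pmatrix}\beta_1-n & 0\\ n & \beta_2\end{pmatrix}.
\]
The operator $(1-xy)^{h_2}=\sum_{k\geq 0}\binom{h_2}{k}(-xy)^k$ is diagonal, acting on $1_\beta$ by $(1-xy)^{\beta_2}$; since applying $e^m/m!$ to $1_\lambda$ lands in weight $\beta=(\lambda_1+m,\lambda_2-m)$, this gives
\[
(1-xy)^{h_2}\exp(xe)1_\lambda \;=\; \sum_{m\geq 0} x^m(1-xy)^{\lambda_2-m}\,\xi_{s(m)}.
\]

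Next I would compute the composition $\xi_{t(n,\beta)}\xi_{s(m)}$ via Definition \ref{enhanced_composition_rule}. The vanishing entries $s_{21}=0$ and $r_{12}=0$ (with $r=t(n,\beta)$) force most entries of any admissible $3$-tensor $A=(a_{ijk})$ to be zero: one quickly sees that $a_{121}=a_{122}=a_{221}=0$ and $a_{222}=\lambda_2-m$, leaving a single free integer parameter $a_{112}=:t$, together with $a_{111}=\lambda_1+m-n-t$, $a_{212}=m-t$, $a_{211}=n-m+t$. The resulting double-coset label is precisely $q(t,n-m+t)$, and the combinatorial weight $V(A)$ collapses to $\binom{\lambda_2-t}{m-t}$ because every other factor in the product is $1$. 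Consequently
\[
\exp(yf)(1-xy)^{h_2}\exp(xe)1_\lambda \;=\; \sum_{m,n,t} x^my^n(1-xy)^{\lambda_2-m}\binom{\lambda_2-t}{m-t}\,\xi_{q(t,\,n-m+t)}.
\]

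Finally I would reindex by setting $m'=t$, $n'=n-m+t$, and $k=m-m'\geq 0$, so that the coefficient of $\xi_{q(m',n')}$ becomes
\[
x^{m'}y^{n'}(1-xy)^{\lambda_2-m'}\sum_{k\geq 0}\binom{\lambda_2-m'}{k}\left(\frac{xy}{1-xy}\right)^{k} \;=\; x^{m'}y^{n'}
\]
by the generalized binomial theorem $(1+w)^{N}=\sum_k\binom{N}{k}w^k$ with $w=xy/(1-xy)$, the identity being one of integer-valued polynomials in $\lambda_2$. Summing over $m',n'\geq 0$ recovers the left hand side. The only real obstacle is keeping track of the row/column/depth constraints on $A$ so as to isolate the free parameter and identify the correct binomial factor; the vanishing rows and columns of $s(m)$ and $t(n,\beta)$ make the system rigid, and once this is done the collapse to $x^{m'}y^{n'}$ is the standard binomial series manipulation.
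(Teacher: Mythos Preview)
Your proof is correct and follows essentially the same approach as the paper's: expand the exponentials using Lemma~\ref{action_lemma}, compute the single nontrivial product $\xi_{r(m,n)}\xi_{q(m,0)}$ via the composition rule (your free parameter $t$ corresponds to the paper's $m-i$, and your weight $\binom{\lambda_2-t}{m-t}$ is the paper's $\binom{\lambda_2-m+i}{i}$), reindex, and collapse the remaining sum with the binomial theorem. The only cosmetic difference is that you carry out the $3$-tensor bookkeeping explicitly, whereas the paper simply states the result of the composition; both arrive at the same substitution and the same binomial identity.
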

\begin{proof}
By Lemma \ref{action_lemma}, $e^m/m! \cdot 1_\lambda$ is equal to $\xi_{q(m,0)}$ (and $f^n/n! \cdot 1_\lambda = \xi_{q(0,n)}$). This allows us to write
\[
\exp(yf) (1-xy)^{h_2} \exp(xe) 1_\lambda
=
\sum_{m = 0}^\infty \sum_{n = 0}^\infty 
y^n \xi_{r(m, n)}
(1-xy)^{\lambda_2 -m}x^m \xi_{q(m,0)},
\]
where
\[
r(m,n) = \left( \begin{array}{ccc}
\lambda_1 -n+m & 0 \\
n & \lambda_2 -m
\end{array} \right).
\]
One can readily check that
\[
\xi_{r(m,n)} \xi_{q(m,0)}
= \sum_{i=0}^{\min(m,n)} {\lambda_2-m+i \choose i } \xi_{q(m-i,n-i)}.
\]
Thus our expression becomes
\[
\sum_{m = 0}^\infty \sum_{n = 0}^\infty 
y^n (1-xy)^{\lambda_2 -m}x^m \sum_{i=0}^{\min(m,n)} {\lambda_2-m+i \choose i } \xi_{q(m-i,n-i)}.
\]
Changing variables to $m^\prime = m-i$ and $n^\prime = n-i$ gives
\[
\sum_{i=0}^\infty
\sum_{m^\prime = 0}^\infty \sum_{n^\prime = 0}^\infty 
y^{n^\prime +i} (1-xy)^{\lambda_2 -m^\prime -i}x^{m^\prime + i}  {\lambda_2-m^\prime \choose i } \xi_{q(m^\prime,n^\prime)}.
\]
Now we observe that the sum over $i$ may be simplified using the binomial theorem:
\[
\sum_{m^\prime = 0}^\infty \sum_{n^\prime = 0}^\infty 
\left(1 + \frac{xy}{1-xy} \right)^{\lambda_2 - m^\prime}
y^{n^\prime } (1-xy)^{\lambda_2 -m^\prime}x^{m^\prime} \xi_{q(m^\prime,n^\prime)} = \sum_{m^\prime, n^\prime} x^{m^\prime}y^{n^\prime} \xi_{q(m^\prime, n^\prime)}.
\]
\end{proof}
\begin{remark}
In Proposition \ref{chevalley_to_schur}, let us substitute $e, f, h_2$ to be the elementary matrices $E_{12}, E_{21}, E_{22}$, so that the right hand side of the equation becomes a $2\times 2$ matrix. (Note that $(1-xy)^{E_{22}} = \exp(\log(1-xy)E_{22})$.) We obtain a matrix $g$:
\[
g = \left( \begin{array}{ccc}
1 & 0 \\
y & 1
\end{array} \right)
\left( \begin{array}{ccc}
1 & 0 \\
0 & 1-xy
\end{array} \right)
\left( \begin{array}{ccc}
1 & x \\
0 & 1
\end{array} \right)
=
\left( \begin{array}{ccc}
1 & x \\
y & 1
\end{array} \right).
\]
Recall from Section 2.4 in \cite{Green} (used in the proof of Lemma \ref{eigenvalue_lemma}) that an element $g = (g_{ij})_{ij}$ of $GL_2$ acts on $(\mathbb{C}^{\oplus 2})^{\otimes d}$ by 
\[
g \mapsto  \sum_{q} \xi_q \prod_{ij} g_{ij}^{q_{ij}},
\]
which is precisely the left hand side of the equation we started with.
\end{remark}

\section{Categorified Stability of Kronecker Coefficients}
\noindent
We conclude by proving stability properties of symmetric group representations.
\begin{proposition}
When viewed as a module over $\End(M^{{\alpha}})$ in $\mathcal{C}_\lambda^{\mathfrak{gl}}$, the action of $\mathfrak{U}_\lambda$ on $HS^\alpha$ is as a highest weight space. That is, any monomial 
\[
\prod_{r\in R} f_{i_r} \prod_{s \in S} e_{j_s} 1_{{\alpha}}
\]
acts as zero unless $S$ is empty (and hence $R$ is also empty by weight considerations).
\end{proposition}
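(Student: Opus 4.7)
The plan is to reduce the statement to the classical fact that $\Hom_{\mathbb{Q} S_n}(S^\lambda, M^\lambda)$ consists of $\mathfrak{gl}_\infty$-highest weight vectors, and then lift via the usual Zariski-density argument employed throughout the paper.

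First I would dispose of the weight bookkeeping: the monomial $m = \prod_{r \in R} f_{i_r} \prod_{s \in S} e_{j_s} 1_\alpha$ lies in $1_{\alpha'}\mathfrak{U}_\lambda 1_\alpha$ where $\alpha' = \alpha + \sum_s \alpha_{j_s} - \sum_r \alpha_{i_r}$, so unless these two sums are equal, $m$ is not an endomorphism of $M^\alpha$ and the assertion is vacuous (in particular, $S = \emptyset$ forces $R = \emptyset$). Assuming the weights do balance, $F(m) \in \End_{\mathcal{C}_\lambda}(M^\alpha)$ is an $\mathcal{R}_l$-linear combination of the $\xi_q$'s, and its action on the rank-one $\mathcal{R}_l$-module $HS^\alpha$ is a scalar $c \in \mathcal{R}_l$. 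The goal is to show $c = 0$ whenever $S \neq \emptyset$.

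Next I would invoke Zariski density. By Proposition~\ref{interpolated_eigenvalue} the scalar $c$ is an element of $\mathcal{R}_l$, so it suffices to show $\phi_\mu(c) = 0$ for all $\mu = (\mu_1, \dots, \mu_l)$ with $\mu_1 \gg \mu_2 \gg \cdots \gg \mu_l \gg 0$; such $\mu$ form a Zariski-dense subset of the affine space of specialisations, just as in the proof of Proposition~\ref{enhanced_properties}. For such a $\mu$, $\phi_\mu(\alpha)$ is a genuine partition of some $n$, and under $F_\mu$ the module $HS^\alpha$ specialises to $\Hom_{\mathbb{Q} S_n}(S^{\phi_\mu(\alpha)}, M^{\phi_\mu(\alpha)})$ with its $\End_{\mathbb{Q} S_n}(M^{\phi_\mu(\alpha)})$-action — this compatibility is automatic because the generating function defining $HS^\alpha$ in Proposition~\ref{interpolated_eigenvalue} was chosen precisely to interpolate the one in Lemma~\ref{eigenvalue_lemma}. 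The generators $e_i, f_i$ of $\mathfrak{U}_\lambda$ specialise to the corresponding Chevalley generators of the Schur algebra acting on $V^{\otimes n}$ (via Lemma~\ref{action_lemma}), so $\phi_\mu(c)$ is the scalar by which $F_\mu(m)$ acts on the specialised $\Hom$-space.

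Finally I would apply the classical input. By Schur--Weyl duality, $\Hom_{\mathbb{Q} S_n}(S^{\phi_\mu(\alpha)}, V^{\otimes n})$ is the irreducible $\mathfrak{gl}_N$-module of highest weight $\phi_\mu(\alpha)$ (for $N$ large), and its weight-$\phi_\mu(\alpha)$ subspace — which is precisely $\Hom_{\mathbb{Q} S_n}(S^{\phi_\mu(\alpha)}, M^{\phi_\mu(\alpha)})$ — is one-dimensional, spanned by a highest-weight vector. Reading the monomial $m$ from right to left, the innermost generator $e_{j_{|S|}}$ (which exists because $S \neq \emptyset$) already annihilates this highest-weight space, so the whole monomial acts as zero after specialisation. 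Thus $\phi_\mu(c) = 0$ on a Zariski-dense set, giving $c = 0$.

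The main obstacle is mostly conceptual rather than computational: one must confirm that the specialisation functor genuinely identifies $HS^\alpha$ with the classical highest-weight line $\Hom(S^{\phi_\mu(\alpha)}, M^{\phi_\mu(\alpha)})$ together with its $\End(M^{\phi_\mu(\alpha)})$-action. Once this identification is in hand (and it follows tautologically from the shared generating-function definition), the highest-weight vanishing of $e_i$ combined with Zariski density does the rest.
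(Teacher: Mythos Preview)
Your proposal is correct and is precisely an unpacking of the paper's one-line proof (``This follows from the fact that $HS^{\alpha}$ was constructed as an interpolation of actions on highest-weight spaces''). The paper leaves the Zariski-density reduction and the classical highest-weight input implicit, but those are exactly the steps you spell out, and they are the intended content of that sentence.
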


\begin{proof}
This follows from the fact that $HS^{{\alpha}}$ was constructed as an interpolation of actions on highest-weight spaces.
\end{proof}

\begin{corollary}
We may use the realisation in terms of $\mathcal{C}_\lambda^{\mathfrak{gl}}$ to describe the left $\End(M^\beta)$-module
\[
\Hom(M^{{\alpha}}, M^{{\beta}}) \otimes_{\End(M^{{\alpha}})} HS^{{\alpha}} \cong 1_{{\beta}}\mathfrak{U}_{\lambda}1_{{\alpha}}
\otimes_{1_{{\alpha}}\mathfrak{U}_{\lambda}1_{{\alpha}}} HS^{{\alpha}}.
\]
In particular, this space is finite dimensional.
\end{corollary}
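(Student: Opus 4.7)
The isomorphism is immediate from Theorem \ref{gl_schur_equivalence}, the equivalence $F: \mathcal{C}_\lambda^{\mathfrak{gl}} \to \mathcal{C}_\lambda$. Indeed, $F$ identifies $\Hom_{\mathcal{C}_\lambda}(M^\alpha, M^\beta)$ with $1_\beta \mathfrak{U}_\lambda 1_\alpha$ as bimodules over the endomorphism algebras of $M^\alpha$ and $M^\beta$, and the $\End(M^\alpha)$-module $HS^\alpha$ is defined in the same way on both sides of the equivalence.

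For the finite-dimensionality statement, the plan is to exploit the triangular (PBW) decomposition of $\mathfrak{U}_\lambda$ together with the highest-weight property of $HS^\alpha$ established in the preceding proposition. By PBW, $1_\beta \mathfrak{U}_\lambda 1_\alpha$ is spanned over $\mathcal{R}_l$ by ordered monomials $1_\beta F H E 1_\alpha$, with $F$ a product of $f_i$'s, $H$ in the Cartan part (a polynomial in the idempotents), and $E$ a product of $e_i$'s, subject to the weight constraint $\text{wt}(E)-\text{wt}(F) = \beta - \alpha$. The preceding proposition tells us that the character $\chi: 1_\alpha \mathfrak{U}_\lambda 1_\alpha \to \mathcal{R}_l$ encoding $HS^\alpha$ annihilates every PBW element $1_\alpha F' H' E' 1_\alpha$ with $E' \neq 1$, while sending $H' 1_\alpha$ to the scalar $H'(\alpha)$.

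My strategy is to show by induction on the $E$-degree that in $Y := 1_\beta \mathfrak{U}_\lambda 1_\alpha \otimes_{1_\alpha \mathfrak{U}_\lambda 1_\alpha} HS^\alpha$ every PBW monomial reduces modulo the tensor relations to a linear combination of terms $1_\beta F' 1_\alpha \otimes v$ with $F' \in U^-$ of weight $\alpha - \beta$. Since $\alpha, \beta \in T_\lambda$ agree with $\lambda$ outside a finite set of coordinates, $\alpha - \beta$ is supported on only finitely many simple roots, and the number of such PBW monomials $F'$ is bounded by the Kostant partition function value $P(\alpha - \beta)$, which is finite. Together with the scalar action of the Cartan, this yields a finite $\mathcal{R}_l$-generating set for $Y$.

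The main obstacle is the inductive reduction. Since positive generators $e_i$ do not themselves lie in $1_\alpha \mathfrak{U}_\lambda 1_\alpha$, they cannot be absorbed directly through the tensor product. The idea is to produce matching $f_i$'s on the right via the Chevalley relation $(e_i f_i - f_i e_i) 1_\alpha = (\alpha_i - \alpha_{i+1}) 1_\alpha$, thereby creating right-multiplication by the element $1_\alpha f_i e_i 1_\alpha \in \ker \chi$ (which vanishes in $Y$), at the cost of lower-$E$-degree or scalar-Cartan correction terms. Tracking these corrections through the induction while verifying that the $F$-degree remains bounded by the weight constraint $\alpha - \beta$ is the core technical step.
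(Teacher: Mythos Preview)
Your approach coincides with the paper's. The paper's proof is extremely terse: it simply asserts that, by the highest-weight property of $HS^\alpha$, the tensor product is spanned by elements whose first factor involves only $f_j$'s, and then observes that the weight constraint $\beta-\alpha$ fixes the multiset of $f_j$'s (leaving only finitely many orderings). Your write-up reaches the same endpoint---a spanning set of $f$-only PBW monomials of weight $\alpha-\beta$, counted by the Kostant partition function---via the same mechanism (PBW plus annihilation of $e$-containing monomials by $\chi$).

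The difference is only in level of detail: the paper takes the reduction ``$E$-part disappears in the tensor product'' as evident from the highest-weight property, whereas you flag it as the genuine content and sketch an induction on $E$-degree using the Chevalley relation to manufacture right factors of the form $1_\alpha f_i e_i 1_\alpha\in\ker\chi$. That caution is reasonable---the step is not entirely formal, since $e_i 1_\alpha$ does not itself lie in $1_\alpha\mathfrak U_\lambda 1_\alpha$---but the paper does not spell it out either. Your proposal is thus a slightly more careful version of the same argument rather than a different one.
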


\begin{proof}
The only thing that needs to be checked is that the space is finite dimensional. But this follows from the highest-weight property of $HS^{{\alpha}}$, because the space is spanned by elements whose first tensor factor (in $\Hom(M^{{\alpha}}, M^{{\beta}})$) does not contain any $e_i$, hence only $f_j$'s. The weight condition immediately determines the number of each $f_j$, so only their ordering is not fixed. Hence there are finitely many possibilities.
\end{proof}
\noindent
The next proposition explains that objects of the form $\Hom(M^{{\alpha}}, M^{{\beta}}) \otimes_{\End(M^{{\alpha}})} HS^{{\alpha}}$ (which has the structure of a free $\mathcal{R}_l$-module) interpolate $\Hom_{S_n}(S^{\lambda}, M^{{\alpha}})$.

\begin{proposition}
Applying specialisation functors $F_\mu: \mathcal{C}_\lambda \to S_d$-mod gives a map 
\[
F_\mu: \Hom(M^{{\alpha}}, M^{{\beta}}) \otimes_{\End(M^{{\alpha}})} HS^{{\alpha}} \to \Hom(M^{\phi_\mu({\alpha})}, M^{\phi_\mu({\beta})}) \otimes_{\End(M^{\phi_\mu({\alpha})})} HS^{\phi_\mu({\alpha})},
\]
which yields the module $\Hom_{S_{|\mu|}}(S^{\phi_\mu({\alpha})}, M^{\phi_\mu({\beta})})$.
\end{proposition}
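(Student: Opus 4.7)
My plan has three ingredients: (i) show that the specialisation functor $F_\mu$ sends the module $HS^{\alpha}$ to $HS^{\phi_\mu(\alpha)}$, (ii) deduce that $F_\mu$ descends to a well-defined map between the balanced tensor products, and (iii) identify the target tensor product with $\Hom_{S_{|\mu|}}(S^{\phi_\mu(\alpha)}, M^{\phi_\mu(\beta)})$.

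For (i), recall from Proposition \ref{interpolated_eigenvalue} that $HS^{\alpha}$ is free of rank one over $\mathcal{R}_l$ and each $\xi_q$ acts by the scalar $c_q \in \mathcal{R}_l$, where $c_q$ is the coefficient of $\prod_{ij} x_{ij}^{q_{ij}}$ in the product of determinants appearing in the interpolated generating function. Applying $\phi_\mu$ to this entire generating function (which is permissible termwise because the $c_q$'s are polynomials in the $\lambda_i$'s) produces precisely the generating function of Lemma \ref{eigenvalue_lemma} with the variable partition evaluated at $\phi_\mu(\alpha)$. Hence $\phi_\mu(c_q) = c_{\phi_\mu(q)}$, which is exactly the scalar by which $\xi_{\phi_\mu(q)}$ acts on $HS^{\phi_\mu(\alpha)}$. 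Thus $HS^{\alpha} \otimes_{\mathcal{R}_l, \phi_\mu} \mathbb{Z}$ and $HS^{\phi_\mu(\alpha)}$ are canonically identified as modules over $\End(M^{\phi_\mu(\alpha)})$.

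For (ii), the functoriality of $F_\mu$ (which respects composition by part $(3)$ of Proposition \ref{enhanced_properties}) gives a ring homomorphism $\End_{\mathcal{C}_\lambda}(M^{\alpha}) \to \End_{S_{|\mu|}}(M^{\phi_\mu(\alpha)})$ and a compatible map on $\Hom(M^\alpha,M^\beta)$. Combined with (i), this is exactly the datum needed for the universal property of the balanced tensor product to supply the claimed morphism $F_\mu$ on $\Hom(M^\alpha,M^\beta) \otimes_{\End(M^\alpha)} HS^\alpha$.

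For (iii), I would use a simplified form of Lemma \ref{sss_lemma}: since over $\mathbb{Q}$ the Specht module $S^{\phi_\mu(\alpha)}$ appears in the permutation module $M^{\phi_\mu(\alpha)}$ with multiplicity one, semisimplicity implies that every morphism $S^{\phi_\mu(\alpha)} \to M^{\phi_\mu(\beta)}$ factors through $M^{\phi_\mu(\alpha)}$ (uniquely up to the freedom in the choice of embedding $S^{\phi_\mu(\alpha)} \hookrightarrow M^{\phi_\mu(\alpha)}$, which is absorbed by $\End(M^{\phi_\mu(\alpha)})$ acting through the tensor product). The resulting composition map
\[
\Hom(M^{\phi_\mu(\alpha)}, M^{\phi_\mu(\beta)}) \otimes_{\End(M^{\phi_\mu(\alpha)})} \Hom(S^{\phi_\mu(\alpha)}, M^{\phi_\mu(\alpha)}) \longrightarrow \Hom(S^{\phi_\mu(\alpha)}, M^{\phi_\mu(\beta)})
\]
sending $f \otimes g \mapsto f \circ g$ is then an isomorphism, and by the identification in (i) the left-hand side is the target of $F_\mu$.

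The main subtlety is step (i): making precise that the interpolation of Proposition \ref{interpolated_eigenvalue} specialises compatibly requires treating the generating function as an element of a suitable completion (since it is a power series rather than a polynomial in the $x_{ij}$), but once we restrict attention to a fixed coefficient $x_{ij}^{q_{ij}}$ the issue disappears; the remaining work is bookkeeping. The identification in step (iii) is formal given semisimplicity in characteristic zero, and is essentially a restatement of Lemma \ref{sss_lemma} applied with the trivial extra factor.
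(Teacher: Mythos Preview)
Your proposal is correct and follows the same approach as the paper, which simply states ``This follows from Lemma~\ref{sss_lemma}.'' You have merely made explicit the two preliminary observations (that $HS^{\alpha}$ specialises to $HS^{\phi_\mu(\alpha)}$, and that $F_\mu$ therefore descends to the balanced tensor product) that the paper leaves implicit, and your step~(iii) is exactly the one-sided special case of Lemma~\ref{sss_lemma} that the paper is invoking.
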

\begin{proof}
This follows from Lemma \ref{sss_lemma}.
\end{proof}

\begin{theorem}
Let $X$ be an object of the Deligne category. Then, consider
\[
{}^{\beta}HS \otimes_{\End(M^\beta)}
\Hom(M^{{\alpha}}, X \otimes M^{{\beta}}) \otimes_{\End(M^{{\alpha}})} HS^{{\alpha}}.
\]
This is a finite dimensional space which interpolates the spaces $\Hom_{S_d}(S^{\phi_\mu({\alpha})}, F(X) \otimes S^{\phi_\mu({\beta})})$ functorially in $X$. 
\end{theorem}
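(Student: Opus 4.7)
The plan is to reduce to the case $X = M^\delta$ for some $\delta \in T_{|\lambda|}$ by additivity and Karoubian-ness: every object of the Deligne category $\mathcal{C}_{|\lambda|}$ is a direct summand of a finite direct sum of such $M^\delta$'s, and all constructions in the statement (tensor over $\End$'s, $\Hom$, specialisation) respect direct sums and direct summands. Because $\mathcal{C}_\lambda$ is a module category over $\mathcal{C}_{|\lambda|}$, the tensor product $M^\delta \otimes M^\beta$ splits as a \emph{finite} direct sum $\bigoplus_\gamma M^\gamma$: only the $(1,1)$-entry of the matrix parametrising $\gamma$ lies in $\lambda_1+\mathbb{Z}$, the remaining entries being non-negative integers bounded by the parts of $\delta$ or $\beta$. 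Consequently $\Hom(M^\alpha, X \otimes M^\beta)$ decomposes as a finite sum of $\Hom(M^\alpha, M^\gamma)$, and the triple tensor product splits into finitely many blocks of the form ${}^\beta HS \otimes_{\End(M^\beta)} \Hom(M^\alpha, M^\gamma) \otimes_{\End(M^\alpha)} HS^\alpha$. Each such block is finite-dimensional by the preceding corollary: the highest-weight property of $HS^\alpha$ kills all monomials involving any $e_i$, and the weight grading then bounds the total number of $f_j$'s that can appear.

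For the interpolation claim, apply the specialisation functor $F_\mu$. Tensor-compatibility of $F_\mu$ with the $\mathcal{C}_{|\lambda|}$-action gives $F_\mu(X \otimes M^\beta) = F(X) \otimes M^{\phi_\mu(\beta)}$, and applying $F_\mu$ to each of the three tensor factors --- using the preceding proposition for the middle-and-right pair, together with the analogous right-module statement for the left pair (under which ${}^\beta HS$ specialises to $\Hom_{S_d}(M^{\phi_\mu(\beta)}, S^{\phi_\mu(\beta)})$) --- identifies the specialisation of our triple product with
\[
\Hom(M^{\phi_\mu(\beta)}, S^{\phi_\mu(\beta)}) \otimes_{\End(M^{\phi_\mu(\beta)})} \Hom(M^{\phi_\mu(\alpha)}, F(X) \otimes M^{\phi_\mu(\beta)}) \otimes_{\End(M^{\phi_\mu(\alpha)})} \Hom(S^{\phi_\mu(\alpha)}, M^{\phi_\mu(\alpha)}).
\]
Invoking Lemma \ref{sss_lemma} with $N = F(X)$, this is exactly $\Hom_{S_d}(S^{\phi_\mu(\alpha)}, F(X) \otimes S^{\phi_\mu(\beta)})$. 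Functoriality in $X$ is inherited from the explicit functoriality clause of Lemma \ref{sss_lemma}, combined with the functoriality of $F_\mu$ and of the module-category tensor product.

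The main obstacle I anticipate is the bookkeeping needed to match the $\End(M^\alpha)$- and $\End(M^\beta)$-actions on $\Hom(M^\alpha, X \otimes M^\beta)$ used to form the triple tensor product with the pre/post-composition actions appearing in Lemma \ref{sss_lemma} after specialisation. The left $\End(M^\beta)$-action on the middle factor arises from post-composition with $\mathrm{id}_X \otimes \phi$, and one must check that $F_\mu$ carries this action to the corresponding post-composition action on $\Hom(M^{\phi_\mu(\alpha)}, F(X) \otimes M^{\phi_\mu(\beta)})$; the right $\End(M^\alpha)$-action is treated symmetrically. Both verifications follow from the tensor-functoriality of $F_\mu$ for the module-category structure, ultimately reducing to the extension of Proposition \ref{enhanced_tensor_rule} to the $\mathcal{C}_{|\lambda|}$-action on $\mathcal{C}_\lambda$ proved earlier.
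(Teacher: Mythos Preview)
Your proposal is correct and follows the same route as the paper, whose entire proof is the one line ``This follows similarly from Lemma \ref{sss_lemma}''; you have simply spelled out the finite-dimensionality reduction and the specialisation bookkeeping that the paper leaves implicit. One small imprecision: in the module-category tensor $M^\delta \otimes M^\beta$ with $\delta \in T_{|\lambda|}$ and $\beta \in T_\lambda$, it is not only the $(1,1)$-entry of $\gamma$ that lies outside $\mathbb{Z}_{\geq 0}$ --- the first-row entries $\gamma_{1j}$ for $1 \leq j \leq l$ lie in $\lambda_j + \mathbb{Z}$ --- but your conclusion (finiteness of the direct sum) is unaffected, since the entries outside the first row are bounded by the integer parts $\delta_i$ ($i \geq 2$) and the first row is then determined by the column sums.
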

\begin{proof}
This follows similarly from Lemma \ref{sss_lemma}.
\end{proof}
\noindent
Consider the case where $X = X_\nu$ is an indecomposable object of the Deligne category indexed by the partition $\nu$ (so that the specialisation functor to $S_{|\mu|}-$mod maps $X$ to $S^{(|\mu|-|\nu|,\nu)}$ provided $|\mu|$ is sufficiently large). Then, for any $\mu$ a partition of $d$, we may apply specialisation functors
\[
{}^{\beta}HS \otimes_{\End(M^\beta)}
\Hom(M^{{\alpha}}, X \otimes M^{{\beta}}) \otimes_{\End(M^{{\alpha}})} HS^{{\alpha}}
\to
\Hom_{S_d}(S^{\phi_\mu(\alpha)}, S^{(|\mu|-|\nu|,\nu)} \otimes S^{\phi_\mu(\beta)}).
\]
We know this map is surjective. By definition, $\alpha, \beta \in T_\lambda$ differ from $\lambda$ by a fixed (finite) vector, hence the same is true of $\phi_\mu(\alpha), \phi_\mu(\beta)$ with respect to $\mu$. Now, the specialisation functors depend polynomially on $\lambda$, and we may choose to specialise $\lambda = m \mu$ (where $\mu$ is a fixed integer partition), to reduce the polynomial dependence to one variable $m$. But then, for generic $m$ (in particular for $m$ sufficiently large), the dimension of the space is constant. Hence, this provides a categorification of the $(|\lambda|, \lambda, \lambda)$ stability patters of Kronecker coefficients in the sense of Stembridge \cite{Stembridge} (we have interpolated the $((|\mu|-|\nu|,\nu), \mu, \mu)$ multiplicity spaces).
In conclusion, the category $\mathcal{C}_\lambda$ categorifies the $(|\lambda|, \lambda, \lambda)$ stability patterns of Kronecker coefficients in the sense of Stembridge.

\appendix
\section{Failure of $\mathcal{C}_\lambda$ to be Krull-Schmidt}
\noindent
It turns out that $\mathcal{C}_\lambda$ is not in general a Krull-Schmidt category when $l(\lambda) > 1$. The following example illustrates this.
\newline \newline \noindent
Let us work over $\mathbb{C}$, and suppose that $l(\lambda)=2$, and let us consider $M^{(\lambda_1, \lambda_2, 1)}$, inside $\mathcal{C}_{(\lambda_1+1, \lambda_2)}$, where $\lambda_1, \lambda_2$ are generic scalars. To show that this object violates the Krull-Schmidt property, we proceed in several steps. For the sake of brevity, we only sketch the argument.
\newline \newline \noindent
{\bf Step 1: The algebra} $A = \End(M^{(\lambda_1, \lambda_2, 1)})$.
\newline \noindent
By the definition of the category $\mathfrak{U}_{(\lambda_1+1, \lambda_2)}$, the endomorphism algebra $A$ can is spanned by paths in a certain quiver algebra with relations. The relations include those of the Serre presentation of $\dot{U}(\mathfrak{gl}_n)$ (where $n$ can take any value), so expressing each path as a product of matrix units $E_{ij}$ (which form a basis of any $\mathfrak{gl}_n$) we may choose a PBW monomial ordering where monomials with $i<j$ appear on the right, and monomials with $i>j$ appear on the left (we do not include terms with $i=j$, because they only contribute a scalar multiple). By weight considerations (i.e. membership of $T_{(\lambda_1+1,\lambda_2)})$, the presence of a single factor $E_{i,j}$ with $i<j$ and $j>3$ yields the zero map in $\End(M^{(\lambda_1, \lambda_2, 1)})$. It follows that the only endomorphisms that need to be considered are those for which $i,j \leq 3$, so that the endomorphism algebra is a quotient of the weight-zero subalgebra of $\dot{U}(\mathfrak{gl}_3)$. Let us write a PBW monomial in this algebra as
\[
E_{3,2}^{b_{3,2}} E_{3,1}^{b_{3,1}} E_{2,1}^{b_{2,1}} E_{1,2}^{a_{1,2}} E_{1,3}^{a_{1,3}} E_{2,3}^{a_{2,3}}.
\]
If $a_{1,3}+a_{2,3} > 1$, then the same weight considerations mean we obtain the zero endomorphism of $M^{(\lambda_1, \lambda_2, 1)}$. It follows that $A$ is the quotient of the weight-zero subalgebra of $\dot{U}(\mathfrak{gl}_3)$ by the ideal $I$ consisting of all PBW monomials with $a_{1,3}+a_{2,3} > 1$. Thus $A$ has a basis consisting of PBW monomials such that $b_{3,2}+b_{3,1}=a_{2,3}+a_{1,3}$, $b_{3,1}+b_{2,1}=a_{1,3}+a_{1,2}$ (weight-zero conditions), and $a_{1,3}+a_{2,3} \leq 1$. To leading order in the PBW filtration, multiplication of PBW monomials is addition of the exponents. Classifying solutions to the above system shows $A$ is generated by the following five elements:
\begin{eqnarray*}
& &E_{2,1}E_{1,2}, \\
& &E_{3,1}E_{1,3}, \\
& &E_{3,2}E_{2,3}, \\
& &E_{3,2}E_{2,1}E_{1,3}, \\
& &E_{3,1}E_{1,2}E_{2,3}.
\end{eqnarray*}
\noindent
Note that inside $A$, $\frac{1}{\lambda_2+1}E_{3,2}E_{2,3}$ is an idempotent, because
\begin{eqnarray*}
(E_{3,2}E_{2,3})^2 1_{(\lambda_1, \lambda_2,1)}&=& E_{3,2}E_{2,3}E_{3,2} 1_{(\lambda_1, \lambda_2+1,0)} E_{2,3} \\
&=& E_{3,2}(E_{3,2}E_{2,3}+\lambda_2+1)1_{(\lambda_1, \lambda_2+1,0)}E_{2,3} \\
&\in& (\lambda_2+1)E_{3,2}E_{2,3}1_{(\lambda_1, \lambda_2,1)} + I.
\end{eqnarray*}
Similarly, $\frac{1}{\lambda_1+1} E_{3,1}E_{1,3}$ is another idempotent in $A$.
\newline \newline \noindent
{\bf Step 2: Understanding} $A$ {\bf via a suitable representation.}
\newline \noindent
The algebra $A$ acts on the $(\lambda_1, \lambda_2, 1)$-weight space of any Verma module of highest-weight dominating $(\lambda_1,\lambda_2,1)$. There are two such families of weights: $(\lambda_1+r, \lambda_2-r, 1)$, and $(\lambda_1+r, \lambda_2-r+1,0)$. We focus only on the latter family. Consider a Verma module of highest weight $(\lambda_1+r, \lambda_2-r+1,0)$ with highest-weight vector $v$. The $(\lambda_1, \lambda_2, 1)$-weight space is two dimensional, with basis
\begin{eqnarray*}
X_1 &=& E_{3,2}\frac{E_{2,1}^r}{r!} v,\\
X_2 &=& E_{3,1}\frac{E_{2,1}^{r-1}}{(r-1)!} v.
\end{eqnarray*}
\noindent
We may calculate the action of the five generating elements of $A$ on this basis, in terms of $r$. For example,
\begin{eqnarray*}
E_{3,2}E_{2,3} X_2 &=& E_{3,2}E_{2,3}E_{3,1}\frac{E_{2,1}^{r-1}}{(r-1)!} v \\
&=& E_{3,2}(E_{3,1}E_{2,3} + E_{2,1})\frac{E_{2,1}^{r-1}}{(r-1)!} v \\
&=& 0 + E_{3,2} E_{2,1}\frac{E_{2,1}^{r-1}}{(r-1)!} v \\
&=& r X_1.
\end{eqnarray*}
The identity 
\[
[e, \frac{f^r}{r!}] = \frac{f^{r-1}}{(r-1)!} (h-r+1)
\]
for $\mathfrak{sl}_2$ is useful for many cases of this calculation. Ultimately we obtain the following representing matrices.
\begin{eqnarray*}
E_{2,1}E_{1,2} &\mapsto&
\left( \begin{array}{cc}
r(\lambda_1-\lambda_2+r) & -r \\
-(\lambda_1-\lambda_2+r) & r(\lambda_1-\lambda_2+r)-\lambda_1+\lambda_2
\end{array} \right)\\
E_{3,1}E_{1,3} &\mapsto& \left( \begin{array}{cc}
\lambda_2 + 1 & r \\
0 & 0
\end{array} \right)\\
E_{3,2}E_{2,3} &\mapsto& \left( \begin{array}{cc}
0 & 0 \\
\lambda_1-\lambda_2+r & \lambda_1+1
\end{array} \right)\\
E_{3,2}E_{2,1}E_{1,3} &\mapsto& \left( \begin{array}{cc}
r(\lambda_1-\lambda_2 + r) & r(\lambda_1+1) \\
0 & 0
\end{array} \right)\\
E_{3,1}E_{1,2}E_{2,3} &\mapsto& \left( \begin{array}{cc}
0 & 0 \\
(\lambda_1-\lambda_2+r)(\lambda_2+1) & r(\lambda_1-\lambda_2+r)
\end{array} \right).
\end{eqnarray*}
\noindent
It is convenient to perform a mild change of basis.
\begin{eqnarray*}
E_{2,1}E_{1,2} + E_{3,1}E_{1,3} + E_{3,2}E_{2,3} - (\lambda_1+2)(\lambda_2+1)
\Id&\mapsto&
(r(\lambda_1-\lambda_2+r)-(\lambda_1+1)(\lambda_2+1))
\left( \begin{array}{cc}
1 & 0 \\
0 & 1
\end{array} \right)\\
E_{3,1}E_{1,3} &\mapsto& \left( \begin{array}{cc}
\lambda_2 + 1 & r \\
0 & 0
\end{array} \right)\\
E_{3,2}E_{2,3} &\mapsto& \left( \begin{array}{cc}
0 & 0 \\
\lambda_1-\lambda_2+r & \lambda_1+1
\end{array} \right)\\
E_{3,2}E_{2,1}E_{1,3}-(\lambda_1+1)E_{3,1}E_{1,3} &\mapsto& \left( \begin{array}{cc}
r(\lambda_1-\lambda_2 + r) -(\lambda_1+1)(\lambda_2+1)& 0 \\
0 & 0
\end{array} \right)\\
E_{3,1}E_{1,2}E_{2,3} -(\lambda_2+1)E_{3,2}E_{2,3}&\mapsto& \left( \begin{array}{cc}
0 & 0 \\
0 & r(\lambda_1-\lambda_2+r)-(\lambda_1+1)(\lambda_2+1)
\end{array} \right).
\end{eqnarray*}
\noindent
As the first matrix is the sum of the last two, we may disregard the first matrix for the purpose of figuring out what algebra is generated by these elements. Let us conjugate by the diagonal matrix with entries $r, 1$, which has the effect of dividing the $(1,2)$ entry by $r$, and multiplying the $(2,1)$ entry by $r$. If we let 
\[
P = r(\lambda_1-\lambda_2+r)-(\lambda_1+1)(\lambda_2+1),
\]
our four generating matrices may now be written
\begin{eqnarray*}
\left( \begin{array}{cc}
\lambda_2 + 1 & 1 \\
0 & 0
\end{array} \right)\\
\left( \begin{array}{cc}
0 & 0 \\
P + (\lambda_1+1)(\lambda_2+1) & \lambda_1+1
\end{array} \right)\\
\left( \begin{array}{cc}
P & 0 \\
0 & 0
\end{array} \right)\\
\left( \begin{array}{cc}
0 & 0 \\
0 & P
\end{array} \right).
\end{eqnarray*}
\noindent
The upshot of this is that instead of considering the action of the algebra $A$ on a single Verma module, one may consider the product of all such modules for $r \in \mathbb{Z}_{>0}$. Considering this much larger representation (and the vectors obtained by taking $X_1$ for each factor, or $X_2$ for each factor), we may now interpret $r$ as a variable, so we have a homomorphism $A \to \Mat_2(\mathbb{C}[r])$. In fact, the $r$-dependence is expressed purely in terms of $P$, so actually we have a homomorphism $A \to \Mat_2(\mathbb{C}[P])$. One can check that the image of this homomorphism is precisely the set of matrices such that at $P=0$, $(-1,\lambda_2+1)^T$ is an eigenvector, while at $P=-(\lambda_1+1)(\lambda_2+1)$, $(1,0)^T$ is an eigenvector. So this algebra consists of $2 \times 2$ polynomial matrices that preserve a flag at one point, and another flag at a different point.
\newline \newline \noindent
{\bf Step 3: Explaining the failure of the Krull-Schmidt property.}
\newline \noindent
Note that our two idempotents are represented by the matrices
\begin{eqnarray*}
\left( \begin{array}{cc}
1 & \frac{1}{\lambda_2+1} \\
0 & 0
\end{array} \right)\\
\left( \begin{array}{cc}
0 & 0 \\
\frac{P + (\lambda_1+1)(\lambda_2+1)}{ \lambda_1+1} & 1
\end{array} \right).
\end{eqnarray*}
\noindent
The first idempotent acts as zero on the invariant line at $P=0$, and as the identity on the invariant line at $P=-(\lambda_1+1)(\lambda_2+1)$. On the other hand, the second idempotent acts as zero on the invariant line at both values of $P$. As elements of $A$ preserve the invariant lines at these values of $P$, conjugating by a unit in $A$ cannot change the scalars by which these elements act on the respective invariant lines. So, there are four conjugacy classes of rank 1 idempotents, corresponding to whether they act by 0 or 1 on each of the two invariant lines. In particular, the two idempotents we considered, together with their complementary idempotents (i.e. $1-e$ for an idempotent $e$) are pairwise nonconjugate. This means that our idempotents (originally defined in $A$) provide inequivalent direct sum decompositions of $M^{(\lambda_1, \lambda_2, 1)}$ in $\mathcal{C}_{(\lambda_1 +1, \lambda_2)}$. In particular, direct sum decompositions are not unique, and hence the Krull-Schmidt property does not hold.

\bibliographystyle{alpha}
\bibliography{ref.bib}

\end{document}